\newtheorem{thml}{Theorem}
\newtheorem{corl}[thml]{Corollary}
\newtheorem{condl}{Condition}
\newtheorem*{MNConj}{McKay--Navarro Conjecture}
\newcommand{\aut}{\mathrm{Aut}}
\newcommand{\out}{\mathrm{Out}}
\newcommand{\irr}{\mathrm{Irr}}
\newcommand{\syl}{\mathrm{Syl}}
\newcommand{\wh}[1]{\widehat{#1}}
\newcommand{\la}{\lambda}
\newcommand{\wt}[1]{\widetilde{#1}}
\newcommand{\bg}[1]{\mathbf{#1}}
\newcommand{\norm}{\mathrm{N}}
\newcommand{\cen}{\mathrm{C}}
\newcommand{\cent}{\mathrm{C}}
\newcommand{\zen}{\mathrm{Z}}
\newcommand{\zent}{\mathrm{Z}}
\newcommand{\HC}{\mathrm{R}}
\newcommand{\type}[1]{\mathsf{#1}}
\newcommand{\Sp}{\operatorname{Sp}}
\newcommand{\SO}{\operatorname{SO}}
\newcommand{\gal}{\mathcal{G}}
\newcommand{\galh}{\mathcal{H}}
\newcommand{\F}{\mathbb{F}}
\newcommand{\Q}{\mathbb{Q}}
\newcommand{\alt}{\mathfrak{A}}
\newcommand{\sym}{\mathfrak{S}}
\newcommand{\odd}{\mathrm{Irr}_{2\mathrm{cen}}}
\begin{document}
	
	\sloppy
	
	\newcommand{\n}{\mathbf{n}}
	\newcommand{\x}{\mathbf{x}}
	\newcommand{\h}{\mathbf{h}}
	\newcommand{\m}{\mathbf{m}}
	\newcommand{\Ph}{\Phi_}
	
	\newcommand{\bL}{\mathbf{L}}
	\newcommand{\bT}{\mathbf{T}}
	\newcommand{\bG}{\mathbf{G}}
	\newcommand{\bS}{\mathbf{S}}
	\newcommand{\bZ}{\mathbf{Z}}
	\newcommand{\bH}{\mathbf{H}}
	\newcommand{\B}{\mathbf{B}}
	\newcommand{\U}{\mathbf{U}}
	\newcommand{\V}{\mathbf{V}}
	\newcommand{\T}{\mathbf{T}}
	\newcommand{\G}{\mathbf{G}}
	\newcommand{\Para}{\mathbf{P}}
	\newcommand{\Levi}{\mathbf{L}}
	\newcommand{\Y}{\mathbf{Y}}
	\newcommand{\X}{\mathbf{X}}
	\newcommand{\M}{\mathbf{M}}
	\newcommand{\pro}{\mathbf{prod}}
	\renewcommand{\o}{\overline}
	
	\newcommand{\Gtilde}{\mathbf{\tilde{G}}}
	\newcommand{\Ttilde}{\mathbf{\tilde{T}}}
	\newcommand{\Btilde}{\mathbf{\tilde{B}}}
	\newcommand{\Ltilde}{\mathbf{\tilde{L}}}
	\newcommand{\C}{\operatorname{C}}
	
	\newcommand{\N}{\operatorname{N}}
	\newcommand{\bl}{\operatorname{bl}}
	\newcommand{\Z}{\operatorname{Z}}
	\newcommand{\Gal}{\operatorname{Gal}}
	\newcommand{\modulo}{\operatorname{mod}}
	\newcommand{\kernel}{\operatorname{ker}}
	\newcommand{\Irr}{\operatorname{Irr}}
	\newcommand{\D}{\operatorname{D}}
	\newcommand{\I}{\operatorname{I}}
	\newcommand{\GL}{\operatorname{GL}}
\newcommand{\PSU}{\operatorname{PSU}}
		
	\newcommand{\SL}{\operatorname{SL}}
	\newcommand{\W}{\operatorname{W}}
	\newcommand{\R}{\operatorname{R}}
	\newcommand{\Br}{\operatorname{Br}}
	\newcommand{\Aut}{\operatorname{Aut}}
	\newcommand{\End}{\operatorname{End}}
	\newcommand{\Ind}{\operatorname{Ind}}
	\newcommand{\Res}{\operatorname{Res}}
	\newcommand{\br}{\operatorname{br}}
	\newcommand{\Hom}{\operatorname{Hom}}
	\newcommand{\Endo}{\operatorname{End}}
	\newcommand{\Ho}{\operatorname{H}}
	\newcommand{\Tr}{\operatorname{Tr}}
	\newcommand{\opp}{\operatorname{opp}}
	\newcommand{\ssc}{\operatorname{sc}}
	\newcommand{\ad}{\operatorname{ad}}

	\newcommand{\tw}[1]{{}^#1\!}
	
	\theoremstyle{definition}
	\newtheorem{definition}{Definition}[section]
	\newtheorem{notation}[definition]{Notation}
	\newtheorem{construction}[definition]{Construction}
	\newtheorem{remark}[definition]{Remark}
	\newtheorem{example}[definition]{Example}

	\theoremstyle{theorem}
	\newtheorem{theorem}[definition]{Theorem}
	\newtheorem{lemma}[definition]{Lemma}
	\newtheorem{question}{Question}
	\newtheorem{corollary}[definition]{Corollary}
	\newtheorem{proposition}[definition]{Proposition}
	\newtheorem{conjecture}[definition]{Conjecture}
	\newtheorem{assumption}[definition]{Assumption}
	\newtheorem{hypothesis}[definition]{Hypothesis}
	\newtheorem{maintheorem}[definition]{Main Theorem}

	\newtheorem{theo}{Theorem}
	\newtheorem{conj}[theo]{Conjecture}
	\newtheorem{cor}[theo]{Corollary}

	\renewcommand{\thetheo}{\Alph{theo}}
	\renewcommand{\theconj}{\Alph{conj}}
	\renewcommand{\thecor}{\Alph{cor}}
	
	\newcommand{\mandicomment}{\textcolor{blue}}
	\newcommand{\lucascomment}{\textcolor{purple}}
	
\title[McKay--Navarro]{The McKay--Navarro conjecture for the prime 2}
\author{L. Ruhstorfer}
\address[L. Ruhstorfer]{{School of Mathematics and Natural Sciences}, {University of Wuppertal}, {Gaußstr. 20,
		42119 Wuppertal, Germany}}
\email{ruhstorfer@uni-wuppertal.de}

\author{A. A. Schaeffer Fry}
\address[A. A. Schaeffer Fry]{Dept. Mathematics - University of Denver, Denver, CO 80204, USA}
\email{mandi.schaefferfry@du.edu}

\date{}
\thanks{The authors thank the  Isaac Newton Institute
for Mathematical Sciences in Cambridge and the organizers of the Summer 2022 INI program Groups, Representations, and
Applications: New Perspectives, supported by EPSRC grant EP/R014604/1, where this work began.  The second author also gratefully acknowledges support from the National Science Foundation, Award No. DMS-2100912 and her previous institution MSU Denver, who allowed her to continue serving as PI.  The authors thank Carolina Vallejo for useful conversations on the topic for sporadics and alternating groups and Gunter Malle and Britta Sp{\"a}th for their comments on an early draft. Finally, they thank the anonymous referees, whose careful reading and thoughtful remarks greatly improved the exposition.}
	
	\maketitle
\begin{abstract}
We complete the proof of the McKay--Navarro conjecture (also known as the Galois--McKay conjecture) for the prime 2, by completing the proof of the inductive McKay--Navarro conditions introduced by Navarro--Sp{\"a}th--Vallejo for this prime.

\vspace{0.25cm}

\noindent \textit{Mathematics Classification Number:} 20C15, 20C33

\noindent \textit{Keywords:} local-global conjectures, characters, McKay conjecture, Galois--McKay, McKay--Navarro conjecture, finite simple groups, Lie type, Galois automorphisms

\end{abstract}

In the last several decades, much of the progress in the character theory of finite groups has been driven by the so-called ``local-global" conjectures, which posit various relationships between the character theory of a finite group $G$ and the character theory of (or properties of) certain ``local" subgroups, such as the normalizer $\norm_G(Q)$ of a Sylow $\ell$-subgroup $Q$ for a prime $\ell$. One of the most famous of these conjectures is the McKay conjecture, which suggests that there should exist a bijection $\irr_{\ell'}(G)\leftrightarrow\irr_{\ell'}(\norm_G(Q))$ between the set of irreducible complex characters of $G$ whose degrees are not divisible by the prime $\ell$ and the corresponding set for $\norm_G(Q)$. Despite its seemingly innocent formulation, this conjecture has proven to be extremely elusive, with experts still seeking a more conceptual reason for its validity.  In a landmark paper by Isaacs--Malle--Navarro \cite{IsaacsMalleNavarroMcKayreduction}, the McKay conjecture was reduced to a set of stronger conditions on simple groups, now often known as the ``inductive McKay" conditions. With this in place, Malle--Sp{\"a}th and Sp{\"a}th \cite{MS16, spath23} were able to complete the proof of the McKay conjecture for the primes $2$ and $3$, respectively, by proving the inductive conditions for simple groups. While the current paper was under review, the full McKay conjecture was even announced complete by Cabanes--Sp{\"a}th \cite{CS24} using this path.

 In 2004, Navarro noted that there should exist a bijection as predicted in the McKay conjecture that holds a much stronger property, relating not just the character degrees but also the other character values.  To be more precise, let $\gal$ denote the Galois group $\mathrm{Gal}(\Q(e^{2\pi i/|G|})/\Q)$, which acts naturally on the set $\irr(G)$ of irreducible complex characters of $G$ via $\chi^\sigma(g):=\sigma(\chi(g))$ for each $g\in G$ and $\sigma\in\gal$, and let $\galh_\ell\leq \gal$ denote the subgroup consisting of those $\sigma\in\gal$ for which there is some nonnegative integer $e$ such that $\sigma(\zeta)=\zeta^{\ell^e}$ for every root of unity $\zeta$ of order not divisible by $\ell$. Then Navarro's refinement of the McKay conjecture \cite{navarro2004}, which is often known as the McKay--Navarro conjecture or the Galois--McKay conjecture, is as follows:
 
 \begin{MNConj}[Navarro 2004]
 Let $G$ be a finite group, let $\ell$ be a prime, and let $Q\in\mathrm{Syl}_\ell(G)$.  Then there exists an $\galh_\ell$-equivariant bijection $\irr_{\ell'}(G)\leftrightarrow\irr_{\ell'}(\norm_G(Q))$.
  \end{MNConj}

This conjecture has recently inspired a large amount of study on the action of Galois automorphisms on $\irr(G)$ for various groups and the fields of values of characters, see e.g. \cite{SrinivasanVinroot, SFT22, SFV19}.  It has also led to the proof of many interesting statements about properties of $G$ that one can determine from the character table, which would have followed from the McKay--Navarro conjecture or its blockwise version, see e.g. \cite{NTT07, SFgaloisHC, NV17, NTV, SFTaylor2, NT19, malle19, RSV20, NRSV, hung24}.

In \cite{NavarroSpathVallejo}, Navarro--Sp{\"a}th--Vallejo reduced the McKay--Navarro conjecture to proving the so-called \emph{inductive Galois--McKay}, which we will call the  \emph{inductive McKay--Navarro} conditions, for the universal covering groups of nonabelian simple groups, building upon the strategy from Isaacs--Malle--Navarro's inductive McKay conditions. These conditions can be roughly broken into an ``equivariant" condition and an ``extension" condition. 

The first-named author proved that these conditions hold for groups of Lie type in defining characteristic \cite{ruhstorfer}, with exceptions for finitely many types dealt with by Johansson \cite{johansson22} (including the Tits group and groups with exceptional Schur multipliers).  With this, it now suffices, for groups of Lie type, to prove the inductive conditions when the prime $\ell$ is not the defining characteristic for $G$.

In \cite{SF22}, the second-named author proved the ``equivariant" part of the conditions for most groups of Lie type defined over $\mathbb{F}_q$ for the prime 2 and other primes dividing $q-1$, and the two authors proved in  \cite{RSF22} that the full inductive McKay--Navarro conditions hold for untwisted groups of Lie type without graph automorphisms for the prime $\ell=2$. For $\ell=2$ and groups of Lie type defined in odd characteristic, this leaves the case of groups whose underlying algebraic group admits graph automorphisms. 
Some of the latter groups, namely the Suzuki and Ree groups, $\type{G}_2(3^a)$, $\tw{3}\type{D}_4(q)$, and $\tw{2}\type{E}_6(q)$, have also been completed by Johansson \cite{birtethesis}.  

Here, we complete the picture with the following:

\begin{thml}\label{thm:main}
Let $q$ be a power of an odd prime. Then the inductive McKay--Navarro conditions hold for the prime $\ell=2$ for the following simple groups:
\begin{enumerate}
\item\label{mainspor} sporadic groups and groups of Lie type defined over $\F_q$ with exceptional Schur multipliers; 
\item\label{mainalt} the alternating groups $\alt_n$ with $n\geq 5$;
\item\label{mainlie}  the groups of Lie type $\type{D}_n(q)$ and $\tw{2}\type{D}_n(q)$ for $n\geq 4$;  $\type{A}_n(q)$ and $\tw{2}\type{A}_n(q)$ for $n\geq 2$; and $\type{E}_6(q)$ and $\tw{2}\type{E}_6(q)$. 
\end{enumerate} 
\end{thml}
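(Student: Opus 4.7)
The plan is to treat the three parts of the theorem separately and to use the reduction of \cite{NavarroSpathVallejo} so that for each simple group $S$ in the list it suffices to verify the inductive McKay--Navarro conditions for $S$: an $\galh_2$-equivariant and $\Aut(S)$-equivariant bijection $\Omega\colon\irr_{2'}(S)\to\irr_{2'}(\norm_S(Q))$ together with a compatible extension property involving the Schur cover. For part \eqref{mainspor}, the groups form a finite list, so I would work case-by-case with the \textsf{GAP} Character Table Library, explicit descriptions of a Sylow $2$-normalizer and of $\Aut(S)$, and the known action of $\galh_2$ on character values. For each such $S$, I would exhibit a concrete bijection $\Omega$, verify its $\galh_2\times\Aut(S)$-equivariance on the nose, and then check the extension condition by choosing canonical extensions to stabilizers inside the Schur cover of $S$ extended by a cyclic group carrying the relevant automorphisms.

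For part \eqref{mainalt}, I would use the partition combinatorics for $\sym_n$: the odd-degree characters are indexed by partitions of $n$ with trivial $2$-quotient, and the action of $\galh_2$ on these is explicitly described, in particular, by the results behind the Isaacs--Navarro--Olsson conjecture. A Sylow $2$-subgroup $P\leq\sym_n$ is an iterated wreath product, and the $\galh_2$-equivariance of the natural bijection $\irr_{2'}(\sym_n)\leftrightarrow\irr_{2'}(\norm_{\sym_n}(P))$ can be read from this combinatorial description together with the treatment of wreath products. The bijection for $\alt_n$ would then be extracted via Clifford theory from $\sym_n$ to $\alt_n$, and the extension condition handled through the double cover $2.\alt_n$ using the known action of $\galh_2$ on spin characters. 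Here the small exceptional Schur multipliers ($n=6,7$) are absorbed into part \eqref{mainspor}.

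Part \eqref{mainlie} is the substantial case, building directly on \cite{SF22} and \cite{RSF22}. The strategy is to leverage the Deligne--Lusztig parametrization of $\irr_{2'}(G)$, the Jordan decomposition of characters (now known to be compatible with $\galh_2$ and with automorphisms by work of Srinivasan--Vinroot, Taylor, and the first author), and the Bonnaf\'e--Dat--Rouquier / Cabanes--Sp\"ath Morita equivalences that descend the problem to a suitable $2$-local Levi subgroup. The equivariant part of the conditions for the twisted types $\tw{2}\type{A}_n$, $\tw{2}\type{D}_n$, $\tw{2}\type{E}_6$ is obtained by extending the arguments of \cite{SF22} using Ennola duality and the same Morita-theoretic machinery. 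One then verifies, for each family $\type{A}_n/\tw{2}\type{A}_n$, $\type{D}_n/\tw{2}\type{D}_n$, and $\type{E}_6/\tw{2}\type{E}_6$ separately, that the chosen bijection is stable under graph, field, and diagonal automorphisms, and finally produces the required simultaneous extensions to the stabilizer in the Schur cover extended by $\Aut(S)\rtimes\galh_2$.

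The hardest step, and the one I expect to dominate the paper, is the extension condition in the presence of a nontrivial graph automorphism. In type $\type{A}$ the obstruction is the Clifford theory over the nontrivial center of $\SL_n$, where characters above a central character are permuted by diagonal and graph automorphisms simultaneously with the $\galh_2$-action, so the cocycles governing the global extension on $G$ and the local extension on $\norm_G(Q)$ must be shown to agree. In type $\type{D}_n$ the analogous analysis is complicated further by the $2$-part of the center and, for $n=4$, by triality, which must be handled by choosing the Sylow-normalizer-side extensions in a triality-symmetric way; in type $\type{E}_6$ and $\tw{2}\type{E}_6$ the $3$-part of the fundamental group enters, but since $\ell=2$ the diagonal obstruction is mild and one can adapt the $\type{A}$-argument. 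The crux is therefore to construct, in each case, canonical extensions to the relevant stabilizer that depend equivariantly on both the Galois and automorphism actions, and to match them across the McKay bijection.
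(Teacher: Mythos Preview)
Your outline for parts \eqref{mainspor} and \eqref{mainalt} is broadly compatible with the paper's, with one overcomplication: since $\ell=2$, only the $2'$-part of the Schur cover matters (see \cite[Lemma~4.15]{birtethesis}), so spin characters of $2.\alt_n$ never enter. The paper dispatches $\alt_n$ for $n\geq 8$ in a few lines by noting that the Sylow $2$-subgroup is self-normalizing with elementary abelian abelianization and that every odd-degree character of $\alt_n$ is already $\galh_2$-fixed by \cite{nath}; rationality of $\sym_n$-characters then gives the extension condition for free.

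For part \eqref{mainlie} your plan has a genuine gap. You propose to reach the equivariant bijection via Jordan decomposition and the Bonnaf\'e--Dat--Rouquier/Cabanes--Sp\"ath Morita equivalences, but these do not by themselves produce an $\galh_2$-equivariant bijection between $\irr_{2'}(G)$ and $\irr_{2'}(M)$ for the specific local subgroup $M=\norm_G(\bS)^F$ required by the inductive conditions, nor is their Galois compatibility available at the level you would need; Ennola duality likewise does not control Galois actions on individual characters. The paper's route is entirely different and does not use Jordan decomposition or Morita theory at all: it rests on the fact (Proposition~\ref{prop:princseries}) that for these types every odd-degree character lies in the \emph{principal} Harish-Chandra series, so $\chi=\HC_T^G(\lambda)_\eta$, and then on the explicit formula of Theorem~\ref{thm:GaloisAct} expressing $\chi^\sigma$ in terms of three correction characters $\eta^{(\sigma)}$, $\gamma_{\lambda,\sigma}$, $\delta_{\lambda,\sigma}$ of $W(\lambda)$. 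The technical core (Sections~\ref{sec:etagammadelta}--\ref{sec:equiv}) is the verification, via the structure of centralizers of $2$-central semisimple elements and an explicit $\galh$-equivariant extension map $\Lambda$ for $T\lhd N$, that all three corrections are trivial; this makes the Malle--Sp\"ath bijection $\HC_T^G(\lambda)_\eta\mapsto\Ind_{N_\lambda}^N(\Lambda(\lambda)\eta)$ visibly $\galh_2$-equivariant. The extension condition is then settled not by abstract cocycle matching but by exhibiting concrete $(D\galh)_{\chi_0}$-invariant extensions on each side: in type~$\type{D}$ (and the non-extending type~$\type{A}$ case) via the rationality of $\chi_0$ (Corollary~\ref{cor:Doddcharsrational}) together with Lemma~\ref{lem:rationalabove}, and otherwise via the Harish-Chandra compatibility of Proposition~\ref{prop:2.62.7RSF}. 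Your proposal identifies neither of these mechanisms, and without a replacement for the principal-series/extension-map machinery the argument cannot be completed along the lines you sketch.
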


\begin{corl}\label{cor:MN2}
The McKay--Navarro conjecture holds for the prime $\ell=2$.
\end{corl}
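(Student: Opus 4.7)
The plan is to derive this corollary from Theorem~\ref{thm:main} by invoking the reduction theorem of Navarro--Sp\"ath--Vallejo \cite{NavarroSpathVallejo}, which says that the McKay--Navarro conjecture at a prime $\ell$ follows once the inductive McKay--Navarro conditions at $\ell$ are established for the Schur covering group of every nonabelian finite simple group. The task therefore reduces to walking through the classification of finite simple groups and verifying that, at $\ell=2$, every such group is covered either by Theorem~\ref{thm:main} or by one of the partial results already recorded in the introduction.

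First I would dispatch the families that are not of Lie type. The alternating groups $\alt_n$ with $n\geq 5$ are covered by Theorem~\ref{thm:main}\,(\ref{mainalt}), and the sporadic simple groups by Theorem~\ref{thm:main}\,(\ref{mainspor}). For groups of Lie type, when the defining characteristic is $2$ (so that $\ell=2$ is the defining prime), the inductive conditions were verified by Ruhstorfer in \cite{ruhstorfer}, with the handful of exceptional cases --- in particular the Tits group $\tw{2}\type{F}_4(2)'$ and groups possessing exceptional Schur multipliers --- handled by Johansson \cite{johansson22} and by Theorem~\ref{thm:main}\,(\ref{mainspor}).

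For groups of Lie type in odd characteristic I would split according to whether the ambient algebraic group admits a graph automorphism. The untwisted types with no graph automorphism --- that is, $\type{B}_n$, $\type{C}_n$, $\type{F}_4$, $\type{E}_7$, $\type{E}_8$, together with $\type{G}_2(p^a)$ for $p\neq 3$ --- are treated in the authors' earlier joint paper \cite{RSF22}. Among the remaining types in odd characteristic, the groups $\type{A}_n(q)$ and $\tw{2}\type{A}_n(q)$ for $n\geq 2$, $\type{D}_n(q)$ and $\tw{2}\type{D}_n(q)$ for $n\geq 4$, and $\type{E}_6(q)$ and $\tw{2}\type{E}_6(q)$ fall under Theorem~\ref{thm:main}\,(\ref{mainlie}), while $\tw{3}\type{D}_4(q)$, $\type{G}_2(3^a)$, and the Ree groups $\tw{2}\type{G}_2(3^{2a+1})$ are taken care of by Johansson's thesis \cite{birtethesis}. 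Taken together, these inputs cover every nonabelian finite simple group, so the corollary follows.

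The main obstacle here is therefore not mathematical but organisational: the challenge is to ensure that the amalgamation of the different sources leaves no case unaddressed, since all the substantive new content sits in the proof of Theorem~\ref{thm:main}.
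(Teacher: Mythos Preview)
Your approach is correct and matches the paper's, which does not give a separate formal proof but simply remarks that Corollary~\ref{cor:MN2} follows from Theorem~\ref{thm:main} together with the results already cited in the introduction. One small omission in your explicit enumeration: type $\type{A}_1$ (i.e., $\mathrm{PSL}_2(q)$ for odd $q$) is also an untwisted type without graph automorphisms and is likewise handled by \cite{RSF22}, so it should be added to that list.
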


We make a few remarks.  We first note that, thanks to the results mentioned above,   Corollary \ref{cor:MN2} follows from \prettyref{thm:main}. We also remark that we include $\tw{2}\type{E}_6(q)$ (also settled by \cite{birtethesis}), since it can be addressed naturally alongside the others listed. The inductive McKay--Navarro conditions for $\ell=2$ for sporadic and alternating groups were originally checked by C. Vallejo (private communication), and we discuss them briefly in Section \ref{sec:sporalt}. We also note that Corollary \ref{cor:MN2} recovers the main results of \cite{hung24} and of \cite{SFgaloisHC}, although the work in the latter was essential for the results here.
	
The remainder of the paper is structured as follows. In Section \ref{sec:iMN}, we recall the inductive McKay--Navarro conditions. 
In Section \ref{sec:sporalt}, we prove Theorem \ref{thm:main} (\ref{mainspor}) and (\ref{mainalt}).   
In Section \ref{sec:prelim}, we introduce some notation to be used in the remaining sections, which are devoted to the groups in Theorem \ref{thm:main}(\ref{mainlie}).  
We discuss odd-degree characters of these groups and their rationality in Section \ref{sec:rationality}, which is used in Section \ref{sec:equiv} to prove the ``equivariant" part of the inductive McKay--Navarro conditions.  
Finally, we prove the ``extension" part of the conditions in Section \ref{sec:extensions}, completing the proof of Theorem \ref{thm:main}.

\section{The Inductive McKay--Navarro Conditions}\label{sec:iMN}

Fix a prime $\ell$, and write $\galh:=\galh_\ell$. For a character $\chi$, let $\chi^\galh$ denote the orbit under $\galh$ of $\chi$.  When a group $A$ acts on a set $\mathcal{X}$, we will write $A_{\omega}$ for the stabilizer of the element $\omega\in\mathcal{X}$ under $A$.  So, for example, for a finite group $G$, $Q\leq G$, and $\chi\in\irr(G)$, we use $\aut(G)_Q$ to denote the subgroup of automorphisms normalizing $Q$, and $\aut(G)_{Q,\chi^\galh}$ is the stabilizer in $\aut(G)_Q$ of the orbit $\chi^\galh$. That is, $\aut(G)_{Q,\chi^\galh}$ is the group of $\alpha\in\aut(G)_Q$ such that $\chi^\alpha=\chi^\sigma$ for some $\sigma\in\galh$. When $G$ and $Q$ are understood, we will often use $\Gamma$ to denote the group $\Gamma:=\aut(G)_Q$, so that $\Gamma_{\chi^\galh}:=\aut(G)_{Q,\chi^\galh}$, as is frequently done in the existing literature.

 The inductive McKay--Navarro conditions \cite[Definition 3.1]{NavarroSpathVallejo} require two main parts.  The ``equivariant" condition can be phrased as follows: 

\begin{condl}\label{cond:condition}
Let $G$ be a finite quasisimple group, let $Q\in\syl_\ell(G)$, and write $\Gamma:=\aut(G)_Q$.  Then there is a proper $\Gamma$-stable subgroup $M$ of $G$ with $\norm_G(Q)\leqslant M$ and a $\Gamma\times\galh_\ell$-equivariant bijection 
$\Omega\colon\irr_{\ell'}(G)\rightarrow\irr_{\ell'}(M)$ such that corresponding characters lie over the same character of $\zen(G)$.
\end{condl}

For the inductive McKay--Navarro conditions to hold for a finite nonabelian simple group $S$, we require that its universal covering group $G$ satisfies Condition \ref{cond:condition} and that 
\begin{equation}\label{eq:extpart}
(G\rtimes \Gamma_{\chi^{\galh}}, G, \chi)_{\galh} \geqslant_c (M\rtimes \Gamma_{\chi^{\galh}}, M, \Omega(\chi))_{\galh}
\end{equation}
 for all $\chi \in \Irr_{\ell'}(G)$ in the notation of \cite[Definition 1.5]{NavarroSpathVallejo}, which we will now recall.

Let $G\lhd A$ be finite groups and let $\chi\in\Irr(A)$ be such that the orbit $\chi^\galh$ is stable under $A$. Then we write $(A, G, \chi)_\galh$ and call this an $\galh$-triple. Note then that $(A_\chi, G, \chi)$ is a character triple in the sense of \cite[pp. 186]{isaacs}.  As such, there exists a projective representation $\mathcal{P}$ of $A_\chi$ associated to $\chi$, by \cite[Theorem 11.2]{isaacs}. Let $\Q^{\mathrm{ab}}$ be the field generated by $\Q$ and all roots of unity in $\mathbb{C}$. By \cite[Corollary 1.2]{NavarroSpathVallejo}, $\mathcal{P}$ may be taken to have entries in $\Q^{\mathrm{ab}}$ and  such that the factor set $\alpha\colon G\times G\rightarrow \Q^{\mathrm{ab}}$ satisfying $\mathcal{P}(x)\mathcal{P}(y)=\mathcal{P}(xy)\alpha(x,y)$ for each $x,y\in G$ (see \cite[Definition 11.1]{isaacs}) takes only roots of unity as values.  Further, for any $a\in (A\times \galh)_\chi$, there is a function $\mu_a\colon A_\chi\rightarrow \mathbb{C}^\times$ such that $\mathcal{P}^a$ is similar to $\mu_a\mathcal{P}$, by \cite[Lemma 1.4]{NavarroSpathVallejo}.

 Finally, we are ready to define the relation mentioned in \eqref{eq:extpart}.

\begin{definition}[Definition 1.5 of \cite{NavarroSpathVallejo}]\label{def:triples}

Let $(A, G, \chi)_\galh$ and $(H, M, \psi)_\galh$ be $\galh$-triples. Then we write $(A, G, \chi)_\galh\geqslant_c (H, M, \psi)_\galh$ if the following hold:
\begin{enumerate}[label=(\roman*)]
\item $A=GH$, $\cent_A(G)\leq H$, and $M=H\cap G$.
\item $(H\times\galh)_\psi=(H\times\galh)_\chi$.
\item There are projective representations $\mathcal{P}$ of $A_\chi$ and $\mathcal{P}'$ of $H_\psi$ associated to $\chi$ and $\psi$ with entries in $\Q^{\mathrm{ab}}$ with factor sets $\alpha$ and $\alpha'$, respectively, such that $\alpha$ and $\alpha'$ take root of unity values and coincide on $H_\chi\times H_\chi$. Further, the scalar matrices $\mathcal{P}(c)$ and $\mathcal{P'}(c)$ correspond to the same scalar for each $c\in \cent_{A}(G)$.
\item For each $a\in(H\times\galh)_\chi$, the functions $\mu_a$ and $\mu_a'$ such that $\mathcal{P}^a$ is similar to $\mu_a\mathcal{P}$ and $\mathcal{P'}^a$ is similar to $\mu_a'\mathcal{P'}$ (see \cite[Lemma 1.4]{NavarroSpathVallejo}) agree on $H_\chi$.
\end{enumerate}
\end{definition}

We remark that the triples in \eqref{eq:extpart} automatically satisfy conditions (i) and (ii) (see \cite[Lemma 1.33]{birtethesis}), which reduces checking  \eqref{eq:extpart} to verifying the ``extension" conditions (iii) and (iv) of Definition \ref{def:triples} for the triples in \eqref{eq:extpart}.

Throughout, given a nonabelian simple group $S$, we will say that \emph{the inductive McKay--Navarro conditions hold for $S$} if Conditions \ref{cond:condition} 
and \eqref{eq:extpart} hold for its universal covering group $G$.  
We remark that this gives the conditions of \cite[Definition 3.1]{NavarroSpathVallejo}, originally called the inductive Galois--McKay conditions.
 By \cite[Theorem A]{NavarroSpathVallejo}, the McKay--Navarro conjecture holds for all finite groups for the prime $\ell$  if the inductive McKay--Navarro conditions hold for all finite nonabelian simple groups for the prime $\ell$.

In practice,  we are often able to replace the groups $G\rtimes \aut(G)_{Q,\chi^{\galh}}$ and $M\rtimes \aut(G)_{Q, \chi^{\galh}}$ with more convenient groups inducing the same set of automorphisms, using \cite[Theorem 2.9]{NavarroSpathVallejo}. This will be especially useful in our proof of Theorem \ref{thm:main}(\ref{mainlie}). 

\section{Alternating Groups, Sporadic Groups, and Groups with Exceptional Schur Multipliers}\label{sec:sporalt}

We collect here some results that can be completed more computationally or follow directly from previous results.

\begin{proposition}\label{prop:sporadicalt}
Theorem \ref{thm:main} holds for the groups listed in Theorem \ref{thm:main}(\ref{mainspor}) and (\ref{mainalt}). 
\end{proposition}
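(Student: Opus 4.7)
The two parts admit rather different approaches. For part~(\ref{mainspor}), both the sporadic simple groups and the simple groups of Lie type with exceptional Schur multiplier form a finite list. For each such simple group $S$ with universal covering group $G$, the plan is direct case-by-case verification using the character tables available in the GAP character table library: identify $\irr_{2'}(G)$, the Sylow 2-normalizer $M=\norm_G(Q)$, and $\irr_{2'}(M)$; exhibit an $\aut(G)_Q\times\galh_2$-equivariant bijection $\Omega\colon\irr_{2'}(G)\to\irr_{2'}(M)$ preserving central characters; and then construct projective representations of $G\rtimes\aut(G)_{Q,\chi^\galh}$ and $M\rtimes\aut(G)_{Q,\chi^\galh}$ with entries in $\Q^{\mathrm{ab}}$ whose factor sets and Galois multipliers agree on the normalizer side. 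The bulk of this verification was carried out by C.~Vallejo; the point is simply to record it.

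For part~(\ref{mainalt}), the universal covering group of $\alt_n$ (for $n\geq 5$ with $n\neq 6,7$, since the latter two have exceptional Schur multiplier and fall under part~(\ref{mainspor})) is $2.\alt_n$, with $\out(\alt_n)=C_2$ induced from $\sym_n$. The plan is to upgrade the inductive McKay bijection of Malle--Sp\"ath \cite{MS16} for $\alt_n$ to one satisfying Conditions~\ref{cond:condition} and~\ref{cond:ext} for $\galh_2$. Three structural facts drive the argument: every character of $\sym_n$ is rational-valued, so $\galh_2$ fixes $\irr(\sym_n)$ pointwise; odd-degree characters of $\sym_n$ admit Macdonald's description in terms of 2-core towers and restrict to odd-degree characters of $\alt_n$ splitting into $\galh_2$-conjugate pairs precisely for self-conjugate partitions; and the Sylow 2-subgroups of $\sym_n$ are self-normalizing, which gives a clean description of $M$ on the normalizer side. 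Matching these descriptions on the two sides yields a bijection on which both the $\aut$- and $\galh_2$-actions are transparent.

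The main obstacle is the extension condition for the faithful odd-degree characters of $2.\alt_n$, whose values lie in proper quadratic extensions of $\Q$ and which arise from Schur's spin representations rather than from restriction along $2.\alt_n\hookrightarrow 2.\sym_n$. The plan is to exploit Schur's explicit construction, together with the parallel structure of the preimage of $M$ in $2.\alt_n$, to identify matching projective representations of the relevant semidirect products. Since $\out(\alt_n)$ has order $2$ and the action of $\galh_2$ on roots of unity of $2$-power order is tightly constrained, the cocycle equality and the agreement of the Galois multipliers $\mu_a,\mu_a'$ on the normalizer side reduce to comparing a small number of scalars, which can be verified directly.
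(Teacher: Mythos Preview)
Your approach diverges from the paper's in two important ways, and one of your stated obstacles is in fact vacuous.

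First, you work with the full universal cover $2.\alt_n$, whereas the paper immediately reduces to the $2'$-covering group via \cite[Lemma~4.15]{birtethesis}; for $\alt_n$ with $n\geq 8$ this is $\alt_n$ itself, so spin characters never enter the picture. Your ``main obstacle'' then evaporates anyway: for $n\geq 4$ there are \emph{no} faithful odd-degree characters of $2.\alt_n$. Indeed, the preimage in $2.\alt_n$ of the Klein four group $\{1,(12)(34),(13)(24),(14)(23)\}$ is a quaternion group $Q_8$ (in Schur's presentation the preimages of disjoint transpositions anticommute, so each double-transposition lifts to an element of order~$4$), and any faithful $\chi$ restricted to $Q_8$ is a multiple of the unique $2$-dimensional irreducible, forcing $\chi(1)$ even. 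So the paragraph on Schur's explicit spin construction is addressing a nonexistent difficulty.

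Second, even for $\irr_{2'}(\alt_n)$ the paper's argument is much shorter than the Macdonald $2$-core-tower matching you outline. By \cite[Theorem~2.3]{nath} every member of $\irr_{2'}(\alt_n)$ is already $\galh_2$-fixed; the Sylow $2$-subgroups of $\alt_n$ and $\sym_n$ are self-normalizing with elementary abelian abelianization (see \cite[Lemma~4.1]{NRSV}), so the local odd-degree characters are rational too; and $\irr(\sym_n)$ is rational. With everything $\galh_2$-invariant and the inductive McKay conditions already in hand from \cite{malle08b}, both Condition~\ref{cond:condition} and Condition~\ref{cond:ext} follow essentially for free --- no explicit bijection-matching or cocycle comparison is needed.

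For part~(\ref{mainspor}) your plan is broadly correct but understated. The paper does not simply record Vallejo's computation: it invokes Johansson \cite{birtethesis} for several cases ($\alt_6$, $\type{G}_2(3)$, and the exceptional covers in defining characteristic), and it gives a genuine argument for $\PSU_4(3)$, whose $2'$-cover $3^2.\PSU_4(3)$ has $\out(S)\cong\mathrm{Dih}_8$ acting faithfully on the center $C_3\times C_3$, so the equivariance of the bijection and the extension condition require real work beyond a table lookup.
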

\begin{proof}
Let $S$ be one of the simple groups listed. We first note that it suffices to prove Conditions \ref{cond:condition} and \eqref{eq:extpart} from Section \ref{sec:iMN} for the $2'$-covering group of $S$, using e.g. \cite[Lemma 4.15]{birtethesis}. 

First let $S=\alt_n$ be an alternating group with $n\geq 8$, so that $\aut(S)=\sym_n$, both $S$ and $\aut(S)$ have self-normalizing Sylow $2$-subgroups, and the abelianization of these Sylow $2$-subgroups are elementary abelian (see e.g. \cite[Lemma 4.1]{NRSV}). This means that all odd-degree characters of normalizers of Sylow $2$-subgroups of $S$ and of $\aut(S)$ are $\galh_2$-invariant (and even rational-valued).  
The group $S$ is known to satisfy the inductive McKay conditions for the prime $2$ by \cite[Theorem 1.1]{malle08b} and to satisfy the McKay--Navarro conjecture by \cite{nath}. In particular, we then have that $S$ satisfies Condition \ref{cond:condition}, with $M:=\norm_G(Q)=Q$ for $Q\in\syl_\ell(S)$. In fact, every member of $\irr_{2'}(S)$ is fixed by $\galh_2$ by \cite[Theorem 2.3]{nath}. 
Further, it is well-known  that the characters of $\sym_n$ are rational-valued. Note that the stabilizer in $\sym_n$ of any $\chi\in\irr_{2'}(S)$ is either $S$ or $\sym_n$, and in the latter case, $\chi$ extends to $\irr_{2'}(\sym_n)$. Now, let $\hat Q\in\syl_2(\sym_n)$ contain $Q$, and note that $S\hat Q=\sym_n$. If $\Omega$ is the map from Condition \ref{cond:condition}, then the equivariance of $\Omega$ implies that $\Omega(\chi)$ extends to $\hat Q$ exactly when $\chi$ extends to $\sym_n$ and has stabilizer $Q$ in $\hat Q$ otherwise.
 Further, as noted before, these extensions of $\chi$ and of $\Omega(\chi)$, in the case that they exist, are $\galh_2$-invariant. Hence we see that $(\sym_n, S, \chi)_{\galh_2} \geqslant_c (\hat Q, Q, \Omega(\chi))_{\galh_2}$. But by applying \cite[Theorem 2.9]{NavarroSpathVallejo}, we therefore see that \eqref{eq:extpart} holds.

Johansson \cite{birtethesis} has completed the cases of the groups $\alt_6\cong \type{B}_2(2)'$, $\type{G}_2(3)$, and the  groups that can be identified with groups of Lie type with exceptional Schur multiplier in defining characteristic. This leaves the sporadic groups,  $ \alt_7$, $\operatorname{PSU}_4(3)$, and $\type{B}_3(3)$.  (We remark that the sporadic groups were also checked by C. Vallejo - private communication.) These groups can be dealt with using the character tables in GAP, building on the fact that they satisfy the inductive McKay conditions by \cite{malle08b}; the knowledge of the action of $\aut(S)$ on the Schur multiplier (see \cite[Table 6.3.1]{GLS3}); the fact that in many cases, the Sylow $2$-subgroup is self-normalizing; and arguing as in \cite[Proposition 5.13]{birtethesis}. We provide more details below. 

Let $S$ be one of the sporadic simple groups, $\alt_7$, or $\type{B}_3(3)$. In each case, let $G$ be the odd covering group, which is either $3.S$ or $S$ itself, and let $Z:=\zent(G)$. Further, $A=\Aut(S)$ is either $S.2$ or $S$ itself. Let $\hat A:=3.A$, when applicable, and otherwise define $\hat A:=A$. The character tables for $S, G, A$, and $\hat A$ are available in the GAP Character Table Library, and we can further compute with the underlying groups themselves using the GAP commands \verb+GroupForGroupInfo+  and \verb+GroupInfoForCharacterTable+.  Let $Q\in\syl_2(\hat A)$, $P:=P\cap Q\in \syl_2(G)$,  $\bar P:=PZ/Z\in\syl_2(S)$, and $\bar Q:=QZ/Z\in\syl_2(A)$. The structure of $\norm_S(\bar P)/\bar{P}'$ when $S$ is sporadic is given in \cite[Table 1]{wilson}, and this is elementary abelian unless $S\in\{J_1, J_2, J_3, Suz, HN\}$. 

Computing with this, and sometimes applying \cite[Theorem A]{NT16}, we obtain the following information, in all cases under consideration:
\begin{itemize}
\item $Q/Q'$ and $P/P'$ are elementary abelian;
\item $\norm_G(P)/P'\cong Z\times \norm_S(\bar P)/\bar P'$;
\item $\norm_{\hat A}( P)/ P'=\left(\norm_G( P)/ P'\right)\left( Q/ P'\right)$;
\item when $\hat A\neq G$, each $\theta\in\Irr_{2'}(\hat A)$ is $\galh_2$-invariant;
\item $A$ acts nontrivially on $Z$ if $Z\neq 1$;
\item any character in $\Irr_{2'}(G)$ that is nontrivial on $Z$ is moved by $\galh_2$ if and only if it is moved by the action of $A$; 
\end{itemize}
and we are able to see from this and the explicit character tables that there exists a bijection $\Omega\colon\Irr_{2'}(G)\rightarrow \Irr_{2'}(\norm_G(P)/P')$ satisfying Condition \ref{cond:condition}. Here we identify $\Irr_{2'}(\norm_G(P)/P')$ with $\Irr_{2'}(\norm_G(P))$. 

We next claim that
$(\hat A, G, \chi)_{\galh_2} \geqslant_c (\norm_{\hat A}(P), \norm_G(P), \Omega(\chi))_{\galh_2}$
for each $\chi\in\Irr_{2'}(G)$, so that by again applying \cite[Theorem 2.9]{NavarroSpathVallejo}, we will have that \eqref{eq:extpart} (and hence the inductive McKay--Navarro condition for the prime $2$) holds. For this, it suffices to consider only the cases that $A/S$ has order $2$. Further, we need only consider those $\chi\in\Irr_{2'}(G)$ that are invariant under $\hat A$, and hence extend to $\hat A$. From our observation that each character in $\Irr_{2'}(\hat A)$ is $\galh_2$-invariant, we see that in particular, any extension of $\chi$ to $\hat A$ is $\galh_2$-invariant. We are left to argue that there is an extension of $\psi:=\Omega(\chi)$ to $\norm_{\hat A}(P)$ that is stable under any $\sigma\in\galh_2$ stabilizing $\psi$. (Note that an extension of $\psi$  exists by the equivariance of $\Omega$ since $|\hat A/G|=2$.) Considering $\psi$ as a character of $\norm_G(P)/P'$, let $\lambda$ be the determinantal character $\lambda:=\det(\psi)$. Then $\lambda$ is a linear character whose restriction $\lambda_0$ to $P/P'$ must also be linear. But since $\psi$ is $Q$-invariant, we see so is $\lambda$, and hence $\lambda_0$. 
In particular, $\lambda_0$ extends to some $\wt\lambda_0$ in $\Irr(Q/P')$, and there is a unique common extension $\mu$ of $\lambda$ and $\wt\lambda_0$ to $\norm_{\hat A}(P)/P'$, by applying \cite[Lemma 4.1]{Spath10}. Note that as a linear character, $\wt\lambda_0$ can be regarded as a character of $Q/Q'$, which must be rational since $Q/Q'$ is elementary abelian. Then $\mu$ is invariant under any $\sigma\in\galh_2$ stabilizing $\lambda$ (and hence under any $\sigma\in\galh_2$ stabilizing $\psi$). By \cite[Lemma 6.24]{isaacs} there is a unique extension of $\psi$ to $\norm_{\hat A}(P)/P'$ with determinantal character $\mu$. This extension must then also be invariant under any $\sigma\in\galh_2$ stabilizing $\psi$, completing the claim.

Finally, we discuss the case of $\operatorname{PSU}_4(3)$, whose Schur multiplier and outer automorphism group are more complicated.
For $S=\operatorname{PSU}_4(3)$, we consider the odd covering group $G:=3^2.S$, and note $\out(S)=\mathrm{Dih}_8$ is the dihedral group of size 8.  In this case, $S$ has a self-normalizing Sylow $2$-subgroup, so for $Q\in\mathrm{Syl}_2(G)$,   we have $N:=\norm_G(Q)=Q\times \zent(G)$. We also remark that the members of $\irr_{2'}(S)$ and $\irr_{2'}(Q)$ are $\galh$-invariant. 
Hence we can see the action of $\galh$ directly from the character table for $G$, and from the action on $C_3\times C_3$ for $N$. We can partition $\irr_{2'}(G)$ and $\irr_{2'}(N)$ into 5 sets each, based on the kernels of the characters when restricted to ${\zent(G)}$, which are either $\zent(G)$ or a copy of $C_3$, and we can see that such a partitioning can be chosen equivariant with respect to the automorphisms that preserve that kernel, using that $S$ satisfies the inductive McKay conditions \cite{malle08b}, 
and we can further see directly from the action of $\galh$ that this is $\galh$-equivariant.  
 
Note that $\aut(S)$ acts faithfully on $\zent(G)$ (see \cite[Table 6.3.1]{GLS3}), and this action is described, for example, in \cite{lindsey}). 
Namely, if we let $\delta$ denote a diagonal automorphism inducing the action of $\operatorname{PGU}_4(3)/\PSU_4(3)$ on $\PSU_4(3)$ and $\varphi$ a field automorphism of order $2$, so that $\mathrm{Dih}_8=\langle\delta, \varphi\rangle$, we have $\delta$ acts on one copy of $C_3$ by inversion and trivially on the other, and $\varphi$ interchanges the copies of $C_3$.  
Further, the members of $\irr_{2'}(S)$ are stable under $\varphi$ and there are two pairs (of degrees 35 and 315) that are interchanged by $\delta$.  Observing the corresponding values for $\irr_{2'}(G)$, we are able to see that it is possible to define our map in such a way that it is $\aut(S)\times\galh$-equivariant.
From here, since $|\out(S)|$ is a $2$-group and $\irr_{2'}(N)$ is comprised of linear characters, we may argue similarly to the case of $\type{B}_2(2)'$ in \cite[Proposition 5.13]{birtethesis} to obtain the desired extensions, completing the proof.  
\end{proof}

\section{Notation and Preliminaries}\label{sec:prelim}

From now on, we set our sights on the groups of Lie type defined in odd characteristic. We begin by setting some notation that we will use throughout. 

\subsection{Characters}\label{characters}
Given finite groups $X\leq Y$, we write $\irr(X)$ for the set of irreducible ordinary characters of $X$, $\irr(X|\chi)$ for the set of irreducible constituents of the restriction $\Res^Y_X(\chi)$ for  $\chi\in\irr(Y)$, and $\irr(Y|\psi)$ for the set of irreducible constituents of the induced character $\Ind^Y_X(\psi)$ for $\psi\in\irr(X)$.  
If $X\lhd Y$ and $\mathcal{X}$ is a subset of $\irr(X)$, an \emph{extension map} for $\mathcal{X}$ with respect to $X\lhd Y$ is a map $\Lambda\colon\mathcal{X}\rightarrow \bigcup_{\psi\in\mathcal{X}}\Irr(Y_\psi|\psi)$ such that for each $\psi\in\mathcal{X}$, the character $\Lambda(\psi)$ is an extension of $\psi$ to $Y_\psi$.

\subsection{Groups of Lie Type}
Throughout, we let $\bG$ be a connected, reductive algebraic group defined in characteristic $p$, and let $q$ be a power of $p$. Let $F\colon \bG\rightarrow\bG$ be a Frobenius endomorphism giving $\bG$ an $\F_q$-rational structure, and write $G:=\bG^F$ for the group of fixed points, giving a finite group of Lie type defined over $\mathbb{F}_q$. (Note that the case of Suzuki and Ree groups has been completed in \cite{birtethesis}, so we do not need to consider the case of more general Steinberg endomorphisms here.)

We let $\bT\leq \bg{B}$ be an $F$-stable maximal torus inside an $F$-stable Borel subgroup, and further write $T:=\bT^F$.  We will call such a $T$ a \emph{maximally split torus of $G$}. We will write $\bg{W}:=\norm_{\bG}(\bT)/\bT$ for the Weyl group of $\bG$, let $W:=\bg{W}^F$, and let $\Phi$ denote the root system for $\bG$ with respect to $\bT$. We use the  notation $n_\alpha(t')$, $h_\alpha(t')$, and $x_\alpha(t)$ to denote the Chevalley generators as in the Chevalley relations \cite[Theorem 1.12.1]{GLS3} for $t\in\overline{\F}_p$, $t'\in\overline{\F}_p^\times$, and $\alpha\in\Phi$.

Letting $N:=\norm_G(\bT)$, we have $N=TV$ and $\norm_{\bG}(\bT)=\langle \bT, V\rangle$ for the so-called \emph{extended Weyl group}  $V:=\langle n_\alpha(1)\mid\alpha\in\Phi\rangle\leq \norm_\bG(\bT)$ defined as in \cite[Section 3.A]{MS16}. Then $H:=V\cap \bT=\langle h_\alpha(-1)\mid\alpha\in\Phi\rangle$ is an elementary abelian $2$-group.  

\subsection{Harish-Chandra Theory}
More generally, we can consider an $F$-stable  Levi subgroup $\bL\leq \bg{P}$ inside an $F$-stable parabolic subgroup of $\bG$, and write $L:=\bL^F$. Here $\bg{P}^F=U\rtimes L$ is a semidirect product with an appropriate subgroup $U$. If $\lambda\in\irr(L)$, we write $\HC_L^G(\la)$ for the Harish-Chandra induced character - that is, the character obtained by inflating $\la$ to $\bg{P}^F$ and then inducing it to $G$. If $\la$ is moreover cuspidal, we call the pair $(\bL, \la)$ a \emph{cuspidal pair} for $G$. In the special case that $L=T$ is a maximally split torus, the irreducible constituents of $\HC_T^G(\la)$ for $\lambda\in\Irr(T)$ are known as the \emph{principal series}.

Given a cuspidal pair $(\bL, \la)$, we have a bijection between the set $\mathcal{E}(G,\bL,\la)$ of irreducible constituents of $\HC_L^G(\la)$ and the irreducible characters of the \emph{relative Weyl group} $W(\la):=\norm_{\bG^F}(\bL)_\la/L$, and we fix such a bijection throughout. (See \cite[Section 10.6]{carter2}).  
With this established, we will denote by $\HC_L^G(\la)_\eta$ the constituent of $\HC_L^G(\la)$ corresponding to the character $\eta\in\irr(W(\la))$.  
Now, the group $W(\la)$, which is not necessarily a reflection group, can be decomposed as a semidirect product $W(\la)=R(\la)\rtimes C(\la)$, where $R(\la)$ is a Weyl group with some root system $\Phi_\la$ and  $C(\la)$ is the stabilizer of a simple system in $\Phi_\la$. (See \cite[Section 10.6]{carter2}.)

\subsection{Duality}
Associated to $(\bG, F)$ is a dual pair $(\bG^\ast, F^\ast)$ with maximally split torus $\bT^\ast$ dual to $\bT$, and we write $G^\ast:=(\bG^\ast)^{F^\ast}$. To ease notation, we write $F$ also for the Frobenius $F^\ast$. Given a semisimple element $s\in G^\ast$, we may define $W^\circ(s)$ to be the Weyl group of $\cent_{\bG^\ast}^\circ(s):=\cent_{\bG^\ast}(s)^\circ$ with respect to $\bT^\ast$ and $W(s)$ to be the Weyl group of $\cent_{\bG^\ast}(s)$ with respect to $\bT^\ast$. Then we have  $W(s)=W^\circ(s)\rtimes A(s)$, where $A(s)\cong \cent_{\bG^\ast}(s)/\cent^\circ_{\bG^\ast}(s)$. (See e.g. \cite[Proposition 1.3]{bonnafequasiisol}.) If $s\in T^\ast:={\bT^\ast}^F$, there is an associated $\la\in\irr(T)$, and the decompositions of $W(\la)$ and $W(s)$ are closely related (this will be made more precise when needed - see e.g. \cite[Lemma 3.4]{SFT22} and \cite[Lemma 4.5]{SFgaloisHC}).

By results of Lusztig \cite{lus88}, the set $\irr(G)$ can be partitioned into subsets $\mathcal{E}(G,s)$ called rational series, indexed by $G^\ast$-conjugacy class representatives of semisimple elements $s\in G^\ast$. For $\chi\in\mathcal{E}(G,s)$, we have $\chi(1)$ is divisible by $[G^\ast:\cent_{G^\ast}(s)]_{p'}$, implying that if $\chi\in\irr_{2'}(G)$, it must be that $s\in G^\ast$ centralizes a Sylow $2$-subgroup of $G^\ast$. Such an element $s$ is often called \emph{$2$-central}.  

\subsection{Automorphisms}
Now, by Section \ref{sec:sporalt}, the remaining simple groups to be considered are simple groups of Lie type with nonexceptional Schur multipliers. Here the universal covering group of such a simple group is of the form $G=\bG^F$ with $\bG$ simple of simply connected type. When $\bG$ is simple of simply connected type and $G$ is perfect (as in the cases we are left to consider), we may obtain the group $\aut(G)$ in a relatively natural way.  Namely, we may take a regular embedding $\iota\colon \bG\rightarrow\wt\bG$ as in \cite[Section 1.7]{GM20} so that, among other properties, $[\bG,\bG]=[\wt{\bG},\wt{\bG}]$ and $\zent(\wt\bG)$ is connected. The Frobenius $F$ may be naturally extended to $\wt\bG$, and we let $F$ still denote this morphism and write $\wt{G}=\wt{\bG}^F$.  Then the automorphisms $\aut(G)$ are induced by $\wt{G}\rtimes {D}$, where  ${D}$ is an appropriate group of graph-field automorphisms of $G$, see \cite[Theorem 2.5.1]{GLS3}. We remark that $D$ can be chosen to act on the set of $F$-stable subgroups of $\wt\bG$.

Further, $\iota$ induces a dual map $\iota^\ast\colon\wt\bG^\ast\rightarrow\wt\bG$ such that the characters in $\mathcal{E}(G, s)$ are the restrictions of the characters in $\mathcal{E}(\wt{G}, \wt{s})$, where $\wt{s}\in\wt{G}^\ast$ satisfies $\iota^\ast(\wt{s})=s$.  The group of linear characters $\irr(\wt{G}/G)$ is isomorphic to $\zent(\wt{G}^\ast)$, and if $\hat z\in\irr(\wt{G}/G)$ corresponds to $z\in\zent(\wt{G}^\ast)$, then $\mathcal{E}(\wt{G}, \wt{s})\otimes \hat{z}=\mathcal{E}(\wt{G}, \wt{s}z)$ (see e.g. \cite[Prop. 11.4.12, Rem. 11.4.14]{DM20}).

\section{Odd-degree characters and rationality}\label{sec:rationality}

In this section, we discuss the action of $\galh:=\galh_2$ on the sets $\irr_{2'}(G)$ for the quasisimple groups $G=\bG^F$ of Lie type where $\bG$ is simple of simply connected type $\type{A}$, $\type{D}$, or $\type{E}_6$.

\subsection{Recalling Previous Results}\label{sec:oldresults}

We begin with some previous results on the sets $\irr_{2'}(G)$ for the groups $G=\bG^F$ under consideration.

A key result from \cite{MS16} (completed later for groups of type $\type{A}$ in \cite{malle19}) is that if $\bG$ is of simply connected type, with some exceptions, any $\chi\in\irr_{2'}(G)$ lies in the principal series. Here the groups $\Sp_{2n}(q)$ are the main exception.

\begin{proposition}[Malle--Sp{\"a}th, Malle \cite{MS16, malle19}]\label{prop:princseries}
Let $G=\bG^F$ with $\bG$ of simply connected type over $\bar\F_q$ with $q$ odd and $F$ a Steinberg endomorphism.  Suppose $G\neq \Sp_{2n}(q)$ for $n\geq 1$, and let $\chi\in\irr_{2'}(G)$.  Then $\chi$ lies in the principal series.  In particular, $\chi=\HC_T^G(\la)_\eta$ with $T$ a maximally split torus of $G$, $\la\in\irr(T)$,  $\eta\in\irr_{2'}(W(\la))$, and $2\nmid[{W}:W(\la)]$.
\end{proposition}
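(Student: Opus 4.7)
The plan is to deduce the statement from the existing literature, primarily by combining \cite{MS16} with the type-$\type{A}$ completion in \cite{malle19}. There are three claims to verify: that $\chi$ is in the principal series, that $\eta(1)$ is odd, and that $[W:W(\la)]$ is odd.

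First, I would note that if $\chi \in \mathcal{E}(G,s) \cap \irr_{2'}(G)$, then the divisibility $[G^\ast:\cent_{G^\ast}(s)]_{p'} \mid \chi(1)$ forces $s$ to be $2$-central in $G^\ast$, i.e.\ $\cent_{G^\ast}(s)$ contains a Sylow $2$-subgroup of $G^\ast$. The core content of \cite{MS16}---together with the type-$\type{A}$ case completed in \cite{malle19}---is a type-by-type verification that for every simply connected $\bG$ outside the excluded case of type $\type{C}$, every $2$-central semisimple element of $G^\ast$ is $G^\ast$-conjugate into the dual maximally split torus $(\bT^\ast)^F$. Via Lusztig's Jordan decomposition, this places $\chi$ in the principal series, and standard Harish-Chandra theory (\cite[Section 10.6]{carter2}) then yields the parameterization $\chi = \HC_T^G(\la)_\eta$ for some $\la \in \irr(T)$ and $\eta \in \irr(W(\la))$.

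Second, to extract the remaining two conditions, I would appeal to the generic degree formula for principal series characters, which expresses $\chi(1)$ as a product of $\eta(1)$, a power of $q$, and a quotient of cyclotomic polynomial values $\Ph{d}(q)$ whose combinatorics encodes $[W:W(\la)]$. Since $q$ is odd and since the Sylow $2$-structure of $G$ for the listed types is controlled by the normalizer of the maximally split torus, the $2$-adic valuation of the cyclotomic portion reduces to $[W:W(\la)]_2$, giving $\chi(1)_2 = \eta(1)_2 \cdot [W:W(\la)]_2$. The hypothesis that $\chi(1)$ is odd therefore forces both $\eta(1)$ and $[W:W(\la)]$ to be odd, as required.

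The main obstacle in a from-scratch proof would be the $2$-central classification of semisimple elements in each type, which is exactly what the cited papers accomplish; once the principal series reduction is in place, the degree bookkeeping is a routine exercise in $2$-adic valuations of cyclotomic polynomials at odd $q$. In practice the proof is a short appeal to \cite[Theorem 4.1]{MS16} (or its type-$\type{A}$ analogue in \cite{malle19}) combined with the parameterization of Harish-Chandra series recalled in Section \ref{characters}.
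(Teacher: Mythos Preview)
Your proposal is essentially correct and follows the paper's approach: both reduce to citing \cite{MS16} and \cite{malle19}. The paper's precise references are \cite[Theorem~7.7]{MS16} and \cite[Theorem~3.3]{malle19} for principal-series membership, and \cite[Lemma~7.9]{MS16} (extended verbatim to type~$\type{A}$) for the parity of $\eta(1)$ and $[W:W(\la)]$; your pointer to \cite[Theorem~4.1]{MS16} is off.

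Two cautions about your explanatory gloss. First, the implication ``$s$ is $G^\ast$-conjugate into $(\bT^\ast)^F$, hence $\chi$ lies in the principal series'' is not valid on its own: a Lusztig series $\mathcal{E}(G,s)$ with $s\in T^\ast$ can still contain characters outside the principal series. What \cite{MS16} actually does is analyse the centraliser $\cent_{\bG^\ast}(s)$ for $2$-central $s$ and use Jordan decomposition together with the structure of the relevant unipotent characters to force principal-series membership for the odd-degree members. Second, your asserted identity $\chi(1)_2=\eta(1)_2\cdot[W:W(\la)]_2$ is false in general (e.g.\ in $\GL_3(q)$ with $q\equiv 3\pmod 4$, the character attached to the $2$-dimensional representation of $\sym_3$ has $\chi(1)_2=(q+1)_2\geq 4$ while $\eta(1)_2[W:W(\la)]_2=2$); the generic degree is not literally $\eta(1)$ times a power of $q$ times cyclotomic factors. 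The one-sided divisibility needed here is exactly the content of \cite[Lemma~7.9]{MS16}, so it is cleaner to cite that directly rather than sketch a formula that does not hold.
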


\begin{proof}
That $\chi$ lies in the principal series is \cite[Theorem 7.7]{MS16} and \cite[Theorem 3.3]{malle19}.  The second statement is \cite[Lemma 7.9]{MS16}, which now follows for $\bG$ of type $\type{A}$ as well by the same proof and \cite[Theorem 3.3]{malle19}. 
\end{proof}

We remark that \cite[Theorem 7.7]{MS16} details explicitly the series for $G=\Sp_{2n}(q)$ that contain odd-degree characters.  However, we do not need to consider this here, thanks to the authors' previous work \cite{SFT22, SF22, RSF22}. In \cite[Theorem 3.8]{SFgaloisHC}, the second author details how Galois automorphisms act on the Howlett--Lehrer labelings $\HC_L^G(\la)_\eta$.  Thanks to Proposition \ref{prop:princseries}, we will only need the result here for the principal series.  With this in mind, let $T:=\bT^F$ denote a maximally split torus of $G=\bG^F$, let $\lambda\in\irr(T)$, and let $\eta\in\irr(W(\la))$.

Let $N:=\norm_{G}(\bT)$ and let $\Lambda\colon \irr(T)\rightarrow\bigcup_{\psi\in\irr(T)}\irr(N_\psi)$ be an extension map for $\irr(T)$
with respect to $T\lhd N$, which exists by \cite{geck93} and \cite[Theorem 8.6]{lusztig84}. That is, for each $\psi\in\irr(T)$, we have $\Lambda(\psi)$ is an extension of $\psi$ to $N_\psi$, see Section \ref{characters}.  For $\sigma\in\gal$, let $\delta_{\la,\sigma}$ be the linear character of $W(\la)$ such that $\Lambda(\la)^\sigma=\delta_{\la,\sigma}\Lambda({\la^\sigma})$, guaranteed by Gallagher's theorem \cite[Corollary 6.17]{isaacs}, and let $\delta'_{\la,\sigma}\in\irr(W(\la))$ be the character such that $\delta'_{\la,\sigma}(w)=\delta_{\la,\sigma}(w)$ for $w\in C(\la)$ and $\delta'_{\la,\sigma}(w)=1$ for $w\in R(\la)$.  

Recall that $W(\la)\leq \bg{W}$ since $\la\in\Irr(T)$. Let $\gamma_{\la, \sigma}$ be the character of $W(\la)$ such that $\gamma_{\la, \sigma}$ is trivial if $\sigma$ fixes $\sqrt{q}$ and otherwise satisfies $\gamma_{\la,\sigma}(w)=(-1)^{l(w_c)}$ for $w=w_rw_c$ with $w_r\in R(\la)$ and $w_c\in C(\la)$, where $l(w_c)$ is the length $l(w_c)$ in $\bg{W}$ of $w_c$.  (Note that $\gamma_{\la,\sigma}$ is defined more generally in \cite{SFgaloisHC}, but can be viewed as stated here in the case of finite groups of Lie type, see \cite[Lemma 3.5]{SFT22}.)
 
 For $\eta\in\irr(W(\la))$, we denote by $\eta^{(\sigma)}$ the character $\mathfrak{f}(\wt{\eta}^\sigma)$, where $\mathfrak{f}$ is an isomorphism between $\irr(\mathrm{End}_{\overline{\Q}G}(\HC_T^G(\la)))$ and $\irr(W(\la))$ induced by the standard specializations  and $\wt{\eta}\in\irr(\mathrm{End}_{\overline{\Q}G}(\HC_T^G(\la)))$ is such that $\mathfrak{f}(\wt{\eta})=\eta$.  (See \cite[Section 3.5]{SFgaloisHC}.) Note that this is not necessarily the same as $\eta^\sigma$, although we see in \cite[Section 5]{SF22}, combined with Lemma \ref{lem:etachar} below, that this is the case in the situations in which we are interested.  The following is \cite[Theorem 3.8]{SFgaloisHC} in our situation.

\begin{theorem}[Schaeffer Fry\cite{SFgaloisHC}]\label{thm:GaloisAct}
Keep the notation above and let $\sigma\in\gal$ and  $\chi=\HC_T^G(\la)_\eta$. 
  Then $\chi^\sigma=
\HC_T^G(\lambda^\sigma)_{\eta'}$, where $\eta'\in\irr(W(\la))$ is defined by $\eta'(w)=\gamma_{\la, \sigma}(w)\delta'_{\la,\sigma}(w^{-1})\eta^{(\sigma)}(w)$ for each $w\in W(\lambda)$. (Recall here that $W(\la)=W(\la^\sigma)$.)
\end{theorem}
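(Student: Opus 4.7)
The plan is to reduce everything to the action of $\sigma$ on the Howlett--Lehrer endomorphism algebra $\mathcal{H} := \End_{\overline{\Q}G}(\HC_T^G(\la))$. First, since $\HC_T^G$ is built from inflation to $\bg{P}^F$ followed by ordinary induction, and both of those operations commute with the Galois action on character values, one has $\HC_T^G(\la)^\sigma = \HC_T^G(\la^\sigma)$. Thus $\chi^\sigma$ is a constituent of $\HC_T^G(\la^\sigma)$, and since $\la^\sigma$ and $\la$ share the same stabilizer in $W$ we have $W(\la^\sigma) = W(\la)$; so we may write $\chi^\sigma = \HC_T^G(\la^\sigma)_{\eta'}$ for a unique $\eta' \in \irr(W(\la))$ to be identified.

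The second step is to translate the task into a statement about $\mathcal{H}$. The natural $\sigma$-action on matrix entries of representations induces an action on $\mathcal{H}$, and transporting it through the Howlett--Lehrer isomorphism $\mathfrak{f}$ gives a corresponding action on $\irr(W(\la))$, which is by definition $\eta \mapsto \eta^{(\sigma)}$. The remaining work is therefore to express the correction factor $\eta'/\eta^{(\sigma)}$ as a linear character of $W(\la)$ arising from the Galois action on a standard Howlett--Lehrer basis $\{B_w : w \in W(\la)\}$ of $\mathcal{H}$.

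Now I would use the decomposition $W(\la) = R(\la) \rtimes C(\la)$. On the reflection subgroup $R(\la)$, the operators $B_w$ are constructed from intertwiners built from the Chevalley elements $n_\alpha(1)$, together with structure constants involving $\sqrt{q}$ coming from the Iwahori--Hecke relations. The $\sqrt{q}$ factors are precisely what produce the character $\gamma_{\la,\sigma}$ when $\sigma$ fails to fix $\sqrt{q}$, while the Chevalley intertwiners themselves are $\sigma$-fixed, accounting for the fact that $\delta'_{\la,\sigma}$ is trivial on $R(\la)$. On the complement $C(\la)$, the operators $B_{w_c}$ involve the chosen extension $\Lambda(\la)$ of $\la$ to $N_\la$; applying $\sigma$ and using Gallagher's theorem in the form $\Lambda(\la)^\sigma = \delta_{\la,\sigma}\, \Lambda(\la^\sigma)$ rescales $B_{w_c}$ by $\delta_{\la,\sigma}(w_c^{-1})$, where the inverse reflects the duality between the element $w_c$ and the matrix coefficient on which $\sigma$ acts.

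Assembling these two contributions through $\mathfrak{f}$ yields the claimed formula $\eta'(w) = \gamma_{\la,\sigma}(w)\, \delta'_{\la,\sigma}(w^{-1})\, \eta^{(\sigma)}(w)$. The main obstacle, as I see it, is to verify that these three factors really multiply together without cross-terms on the semidirect product $R(\la) \rtimes C(\la)$: one must check that conjugation by a $C(\la)$-representative preserves both the $\sqrt{q}$-dependence of the Hecke structure constants on $R(\la)$ and the $\sigma$-equivariance of the $R(\la)$-intertwiners. This should hold because $C(\la)$ acts by permuting the roots in $\Phi_\la$ without rescaling $q$, but verifying it rigorously demands a careful, case-by-case examination of the Howlett--Lehrer construction, which is where the bulk of the technical work should lie.
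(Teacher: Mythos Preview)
The paper does not prove this theorem; it merely records it as \cite[Theorem 3.8]{SFgaloisHC} specialized to the principal series. So there is no ``paper's own proof'' to compare against --- in this paper the result is quoted, not derived.

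Your sketch is a plausible high-level reconstruction of the argument in \cite{SFgaloisHC}: reduce to the endomorphism algebra, track the Galois action on the Howlett--Lehrer basis $\{B_w\}$, and separate the contributions from $R(\la)$ and $C(\la)$. That is indeed the shape of the original argument. Two small caveats: first, the character $\gamma_{\la,\sigma}$ in \cite{SFgaloisHC} is defined in terms of the $\sigma$-action on certain $\ell$-adic square roots arising from specialization of the generic Hecke algebra, and only afterwards identified (in favorable cases such as finite groups of Lie type, via \cite[Lemma 3.5]{SFT22}) with the sign $(-1)^{l(w_c)}$; your outline reverses cause and effect here. Second, your description of $\delta'_{\la,\sigma}$ arising from Gallagher together with ``duality'' giving the inverse is on the right track, but the precise bookkeeping in \cite{SFgaloisHC} goes through the explicit formula for $B_{w_c}$ in terms of $\Lambda(\la)$ and one must check compatibility with the chosen identifications $\irr(W(\la))\cong\irr(W(\la^\sigma))$; this is where the care lies, not in the ``cross-term'' issue you flag at the end, which is essentially automatic from the semidirect product structure and the definition of $\delta'_{\la,\sigma}$ as trivial on $R(\la)$.

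For the purposes of this paper, though, no argument is expected: the theorem is a citation.
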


From this, we see that the main obstruction to understanding the action of $\gal$ on $\irr_{2'}(G)$ is in understanding the characters $\eta^{(\sigma)}, \gamma_{\la,\sigma}$, and $\delta_{\la,\sigma}$ (or $\delta_{\la,\sigma}'$).  Significant progress was made on this in \cite{SF22, SFT22}.  In  the next subsections, we extend these results to the cases that $\bG$ is of type $\type{A}$ or $\type{D}$ omitted there.

The following will also be useful for the case that $\bG$ is of type $\type{A}$.  As usual, we use $\SL_n^\epsilon(q)$ with $\epsilon\in\{\pm\}$ to denote $\SL_n(q)$ in the case $\epsilon=+$ and $\operatorname{SU}_n(q)$ in the case $\epsilon=-$, and similar for the groups $\GL_n^\epsilon(q)$.

\begin{lemma}
\label{lem:typeArest}
Let $q$ be odd and let $G=\SL_n^\epsilon(q)$ and $\wt{G}=\GL_n^\epsilon(q)$ with $n\geq 3$.  Let $\chi\in\irr_{2'}(G)$ and $\wt{\chi}\in\irr(\wt{G}|\chi)$. 
Then either $\Res^{\wt{G}}_G(\wt{\chi})=\chi$ or we have $\Res^{\wt{G}}_G(\wt{\chi})=\chi+\chi'$ is the sum of two irreducible characters and $n=2^r$ is some power of $2$. In the first case, ${\chi}\in\mathcal{E}({G},{s})$ with $\cent_{\bG^\ast}({s})$ connected, and in the second case, one can choose $\wt{\chi}\in\mathcal{E}(\wt{G},\wt{s})$ with $\wt{s}=\mathrm{diag}(1,\ldots, 1, -1,\ldots,-1)$ with $2^{r-1}$ copies of each eigenvalue.
\end{lemma}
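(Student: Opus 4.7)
The plan is to analyse the restriction $\wt\chi|_G$ via Clifford theory combined with Lusztig's correspondence in type~$\type{A}$, and to reduce the claim to a short $2$-adic arithmetic check on the eigenvalue multiplicities of a preimage $\wt s\in\wt G^\ast$ of $s$.

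First, since $\wt G/G$ is cyclic and restrictions from $\wt G=\GL_n^\epsilon(q)$ to $G=\SL_n^\epsilon(q)$ are multiplicity-free (as centralizers of semisimple elements in $\wt G^\ast=\GL_n^\epsilon(q)$ are connected), one has $\wt\chi|_G=\sum_{i=1}^k \chi_i$ with $k=|\hat Z_{\wt\chi}|$, where $\hat Z_{\wt\chi}$ is the stabilizer of $\wt\chi$ in $\irr(\wt G/G)\cong\zent(\wt G^\ast)^F$. Under the correspondence $\mathcal{E}(\wt G,\wt s)\otimes\hat z=\mathcal{E}(\wt G,\wt s z)$ recalled at the end of Section~\ref{sec:prelim}, this stabilizer is identified with
\[
 A_{\bG^\ast}(s)^F \;=\; \{\lambda\in\zent(\wt G^\ast)^F:\lambda\wt s\sim_{\wt G^\ast}\wt s\}.
\]

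The hypothesis $\chi\in\irr_{2'}(G)$ forces $s=\iota^\ast(\wt s)\in G^\ast=\operatorname{PGL}_n^\epsilon(q)$ to be $2$-central, and the Lang--Steinberg formula $|H^F|=|(H^\circ)^F|\cdot|(H/H^\circ)^F|$ rewrites this as $|A_{\bG^\ast}(s)^F|_2=[\GL_n^\epsilon(q):\cent_{\GL_n^\epsilon(q)}(\wt s)]_2$. Set $d=|A_{\bG^\ast}(s)^F|$ with generator $\lambda_0$; multiplication by $\lambda_0$ partitions the distinct eigenvalues of $\wt s$ into $\ell$ orbits of size $d$ of common multiplicity $m_i$, so $n=d\sum_i m_i$. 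Legendre's formula $(m!)_2=2^{m-s_2(m)}$ then turns the $2$-centrality equation into
\[
 d\sum_{i=1}^\ell s_2(m_i) \;=\; s_2(n)+v_2(d),
\]
subject to the Kummer-type inequality $s_2(n)\le d\sum s_2(m_i)$. A short case analysis --- eliminating $d>1$ odd (equality in Kummer would force disjoint binary expansions of the $d$ equal copies of each $m_i$, impossible for $m_i\ne 0$) and $d=2^t$ with $t\ge 2$ (the bound $(2^t-1)\sum s_2(m_i)\le t$ forces $\sum s_2(m_i)=0$, whence the equation fails) --- leaves only $d=1$ or $d=2$ with $\ell=1$ and $m_1=2^{r-1}$. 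In the latter case $n=2^r$ and $\wt s$ is $\wt G^\ast$-conjugate to $\operatorname{diag}(\zeta,\dots,\zeta,-\zeta,\dots,-\zeta)$ for some $\zeta\in\zent(\wt G^\ast)^F$; replacing $\wt\chi$ by $\wt\chi\otimes\hat\zeta^{-1}$, which does not change the restriction to $G$, lets one take $\zeta=1$.

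The main obstacle is the uniform treatment of the unitary case $\epsilon=-1$ and of eigenvalues lying in $\F_{q^e}\setminus\F_q$: a Lifting-the-Exponent computation applied to $(q^{ei}-\epsilon^{ei})_2$ shows that any Galois orbit of eigenvalues of size $e>1$ introduces a factor $(e)_2^{n_j}$ into $|\cent_{\GL_n^\epsilon(q)}(\wt s)|_2$ that, combined with the bound $|A_{\bG^\ast}(s)^F|_2\le 2$ derived above, is incompatible with $2$-centrality unless $e=1$. Thus all eigenvalues of $\wt s$ lie in $\F_q^\times$ (linear case) or in the norm-one subgroup $\{\lambda\in\F_{q^2}^\times:\lambda^{q+1}=1\}$ (unitary case), and any $\lambda\in A_{\bG^\ast}(s)$ with $\lambda\wt s\sim\wt s$ satisfies $\lambda=e_j/e_i$ for two eigenvalues $e_i,e_j$, hence lies in $\zent(\wt G^\ast)^F$. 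Therefore $A_{\bG^\ast}(s)=A_{\bG^\ast}(s)^F$, so $d=1$ implies $\cent_{\bG^\ast}(s)$ is connected, completing the first case of the statement.
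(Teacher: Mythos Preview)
The paper's own proof of this lemma is a two-line citation to \cite[Lemma 10.2]{SFT18a} and \cite[Lemmas 4.5, 4.6]{NT16}, so your direct $2$-adic analysis is a genuinely different route. The overall strategy---translate $2$-centrality of $s$ in $\operatorname{PGL}_n^\epsilon(q)$ into the index equation $[\wt G^\ast:\cent_{\wt G^\ast}(\wt s)]_2=|A(s)^F|_2$, and then study the eigenvalue multiplicities of $\wt s$ via digit-sum arithmetic---is sound and is essentially what lies behind \cite[Lemmas 4.5, 4.6]{NT16}. However, as written the argument has real gaps.

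First, the case analysis on $d=|A(s)^F|$ is incomplete: you treat only $d>1$ odd and $d=2^t$, but $d$ could be $2^t d'$ with $t\ge 1$ and $d'>1$ odd (nothing forces $A(s)^F$ to have prime-power order). The same inequality $s_2(d'M)\le d'\,s_2(M)$ you use elsewhere does handle this case and forces $d'=1$, but you must say so. Second, and more seriously, the Legendre-formula identity $[\GL_n^\epsilon(q):\prod\GL_{m_j}^\epsilon(q)]_2=2^{\sum s_2(m_j)-s_2(n)}$ that drives your whole analysis is only valid when $q\equiv\epsilon\pmod 4$ and all eigenvalues of $\wt s$ lie in the ``small'' torus (so that the centraliser really is $\prod\GL_{m_j}^\epsilon(q)$). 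For $q\equiv-\epsilon\pmod 4$ the Sylow $2$-structure of $\GL_n^\epsilon(q)$ is governed by $\lfloor n/2\rfloor$ and $(q+\epsilon)_2$, and the formula fails (e.g.\ $[\GL_4(q):\GL_3(q)\times\GL_1(q)]_2=2(q+1)_2\neq 4$ when $q\equiv 3\pmod 4$). You acknowledge the eigenvalue issue in your final paragraph, but the LTE argument you give there explicitly invokes ``the bound $|A_{\bG^\ast}(s)^F|_2\le 2$ derived above''---which was derived \emph{from} the Legendre computation that already assumed rational eigenvalues. This is circular. To repair the argument you must either establish $e_j=1$ for all Galois-orbit sizes \emph{before} running the digit-sum analysis (this is possible: a direct LTE computation shows any $e_j>1$ contributes a factor $(q-\epsilon)_2^{n_j(e_j-1)}$ to the index which already exceeds the \emph{a priori} bound $d_2\mid n_2$), or carry the $e_j$'s through the whole computation from the start, and in either case treat $q\equiv-\epsilon\pmod 4$ separately.
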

\begin{proof}
The first statement is \cite[Lemma 10.2]{SFT18a}, which is shown as a consequence of \cite[Lemmas 4.5 and 4.6]{NT16}, and the second statement follows from \cite[Lemmas 4.5 and 4.6]{NT16}.
\end{proof}

\subsection{The Characters $\eta^{(\sigma)}$ and  $\gamma_{\la,\sigma}$}\label{sec:etagammadelta}

In this section, we now let $\bG$ be simple and simply connected (of arbitrary type, unless otherwise specified) and continue to write $G:=\bG^F$, where now  $F\colon \bG\rightarrow\bG$ is a Frobenius endomorphism defining $G$ over $\F_q$ with $q$ odd.

Thanks to the statements in Section \ref{sec:oldresults} and the work of \cite[Section 5]{SF22}, we can say that in our situation, the character $\eta^{(\sigma)}$ is just $\eta$:
\begin{lemma}\label{lem:etachar}
Let $\bG$ be simply connected of type $\type{A}_{n-1}$ with $n\geq 3$ or $\type{D}_n$ with $n\geq 4$ and write $G=\bG^F$.  Let $\chi\in\irr_{2'}(G)$, so that $\chi= \HC_T^G(\la)_\eta$ as in \prettyref{prop:princseries}.  Then $|C(\la)|\leq 2$ or $\bG$ is of type $\type{D}_n$ and $C(\la)$ is elementary abelian of order $4$.  
\end{lemma}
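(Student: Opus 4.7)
The plan is to reduce the statement to a question about component groups of semisimple centralizers in the dual group. By \prettyref{prop:princseries}, $\chi = \HC_T^G(\la)_\eta$ with $\la \in \irr(T)$ for a maximally split torus $T$. Let $s \in \bT^{\ast F} = T^\ast$ be a semisimple element dual to $\la$, so $\chi \in \mathcal{E}(G,s)$. By \cite[Lemma 3.4]{SFT22} and \cite[Lemma 4.5]{SFgaloisHC}, under the correspondence between $W(\la)$ and $W(s)$, the subgroup $R(\la)$ matches $W^\circ(s)$ and $C(\la)$ matches the component group $A(s) \cong \cent_{\bG^\ast}(s)/\cent_{\bG^\ast}^\circ(s)$ (or rather an appropriate $F$-fixed quotient thereof). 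Thus it suffices to bound $|A(s)|$, and in type $\type{D}$ to establish that $A(s)$ is elementary abelian.

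For $\bG$ of type $\type{A}_{n-1}$ with $n\geq 3$, I would invoke \prettyref{lem:typeArest}. In its first case, $\cent_{\bG^\ast}(s)$ is connected, so $A(s) = 1$. In its second case, one may take $\wt{s} = \operatorname{diag}(1,\dots,1,-1,\dots,-1) \in \wt{\bG}^{\ast F}$ with $2^{r-1}$ copies of each eigenvalue; then $\cent_{\wt{\bG}^\ast}(\wt{s}) \cong \GL_{2^{r-1}} \times \GL_{2^{r-1}}$, and the centralizer of $s$ in the adjoint quotient $\bG^\ast = \operatorname{PGL}_n^\epsilon$ gains precisely the coset represented by the permutation matrix that swaps the two eigenspaces (since this conjugates $\wt{s}$ to its central scalar multiple $-\wt{s}$). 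Hence $A(s) \cong \mathbb{Z}/2$, and $|C(\la)| \leq 2$ in type $\type{A}$.

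For $\bG$ of type $\type{D}_n$ with $n\geq 4$, since $\bG$ is simply connected with adjoint dual $\bG^\ast$, standard results on semisimple centralizers (see e.g.\ \cite[Proposition 1.3]{bonnafequasiisol}) give an embedding of $A(s)$ into $\zent(\bG) = \zent(\operatorname{Spin}_{2n})$, which has order at most $4$. It remains to rule out a cyclic $A(s)$ of order $4$ (which is only possible when $n$ is odd). For this, I would analyze centralizers directly: working with a preimage of $s$ in $\operatorname{Spin}_{2n}$ and decomposing the natural $2n$-dimensional orthogonal representation into eigenspaces, one verifies that every nontrivial element of $A(s)$ is represented either by an involution swapping two isomorphic $\type{A}$-type factors of $\cent_{\bG^\ast}^\circ(s)$ corresponding to an eigenvalue pair $(\mu,\mu^{-1})$, or by the order-$2$ diagram automorphism of a single $\type{D}_k$-type factor. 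This forces $A(s)$ to be an elementary abelian $2$-group of order at most $4$, as required.

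The main obstacle is the final elementary-abelian verification for type $\type{D}_n$ with $n$ odd, which requires a case-by-case inspection of the possible centralizer types in $\operatorname{Spin}_{2n}$; by contrast, the type $\type{A}$ case is essentially immediate once \prettyref{lem:typeArest} is in hand.
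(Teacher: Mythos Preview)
Your reduction to the component group $A(s)$ and the type $\type{A}$ argument via \prettyref{lem:typeArest} match the paper's proof essentially verbatim (with the small caveat that the relevant isomorphism is $C(\la)\cong A(s)^F$, not $A(s)$ itself; this does not affect the bound).

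The type $\type{D}$ argument, however, has a genuine gap. You attempt to show that $A(s)$ is elementary abelian for \emph{every} semisimple $s\in\bG^\ast$ by a direct eigenspace analysis, but this is false: for $n$ odd, Bonnaf\'e's classification of quasi-isolated elements in adjoint $\type{D}_n$ (\cite[Table~3]{bonnafequasiisol}) exhibits an element with $\cent_{\bG^\ast}^\circ(s)$ of type $\type{A}_{n-1}$ and $A(s)\cong\mathbb{Z}/4$. In particular, the asserted dichotomy ``swap of $\type{A}$-factors or graph automorphism of a $\type{D}_k$-factor'' does not cover all cases, and your proposed case-by-case inspection cannot succeed as stated.

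What is missing is the hypothesis that $\chi\in\irr_{2'}(G)$, which forces $s$ to be $2$-central in $G^\ast$. This is exactly how the paper proceeds: rather than analysing arbitrary centralizers, it quotes the explicit description of centralizers of $2$-central semisimple elements from \cite[Table~1]{MS16} and \cite[Table~2]{malle19} (which in turn rests on \cite[Table~2]{bonnafequasiisol}). Those tables show that for $2$-central $s$ the connected centralizer is a product of type $\type{D}$ factors and $A(s)$ is elementary abelian of order at most $4$, which is precisely the claim. To repair your argument you must invoke $2$-centrality and restrict to the centralizer shapes it permits; once you do, the eigenspace picture you sketched does become accurate, but at that point you are effectively reproducing the cited tables.
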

\begin{proof}
Let $s\in G^\ast$ be semisimple such that $\chi\in\mathcal{E}(G, s)$.  Let $\iota\colon\bG\hookrightarrow\wt{\bG}$ be a regular embedding as in \cite[Section 1.7]{GM20}, let $\wt{G}:=\wt{\bG}^F$, and let $\wt{s}\in\wt{G}^\ast$ be such that $\iota^\ast(\wt{s})=s$.  Further, let $\wt{\la}\in\irr(\wt{T}|\la)$, where $\wt{T}$ is a maximally split torus of $\wt{G}$ containing $T$.  Then by \cite[Lemma 3.4]{SFT22} and \cite[Lemma 4.5]{SFgaloisHC}, we have $W(\la)\cong W(s)^{F}$ and $R(\la)=W(\wt{\la})=R(\wt{\la})\cong W(\wt{s})^{F}=W^\circ(s)^{F}$, 
where the last equality follows by \cite[(2.2)]{bonnafequasiisol}.  In particular, we have $C(\la)$ is isomorphic to $A(s)^F$, where $A(s)=W(s)/W^\circ(s)\cong \cent_{\bG^\ast}(s)/\cent_{\bG^\ast}^\circ(s)$.  

For $\bG$ of type $\type{D}$, the result about $C(\la)$ now follows from the description of disconnected $\cent_{\bG^\ast}(s)$ for $2$-central semisimple elements $s\in\bG^\ast$ in \cite[Table 1]{MS16} and \cite[Table 2]{malle19}, which follows from \cite[Table 2]{bonnafequasiisol}.
For $\bG$ of type $\type{A}$ the statement about $C(\la)$ follows since $|A(s)^F|\leq 2$ using \prettyref{lem:typeArest}.
\end{proof}

\begin{corollary}\label{cor:etachar}
 Let $G=\bG^F$ where $\bG$ is simple of simply connected type. Let $\chi\in\irr_{2'}(G)$ lie in the principal series, so that it is of the form $\chi=\HC_T^G(\la)_\eta$ as before.  Then $\eta^{(\sigma)}=\eta^\sigma=\eta$ for each $\sigma\in\gal$.
\end{corollary}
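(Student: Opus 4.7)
The plan is to establish the two claims $\eta^{(\sigma)}=\eta^\sigma$ and $\eta^\sigma=\eta$ separately.

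For the first equality $\eta^{(\sigma)}=\eta^\sigma$, I would adapt the argument of \cite[Section 5]{SF22}, which compares the Galois action on the canonical generators of the endomorphism algebra $\End_{\overline{\Q}G}(\HC_T^G(\la))$ (i.e., on the Hecke algebra representations $\widetilde{\eta}$) with the action on the corresponding Weyl group characters $\eta$ under the specialization $q\mapsto 1$. Lemma \ref{lem:etachar} provides exactly the structural input needed: $W(\la)=R(\la)\rtimes C(\la)$ with $|C(\la)|\leq 2$ or (only in type $\type{D}$) with $C(\la)$ elementary abelian of order $4$. Under these constraints the arguments from \cite[Section 5]{SF22} generalize to the types considered here, yielding $\eta^{(\sigma)}=\eta^\sigma$.

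For the rationality statement $\eta^\sigma=\eta$, I would first observe that by Proposition \ref{prop:princseries}, $\eta(1)$ is odd. Applying Clifford's theorem to $R(\la)\trianglelefteq W(\la)$ and using that $|W(\la)/R(\la)|=|C(\la)|$ is a $2$-power, the ramification index and orbit length in the decomposition $\eta|_{R(\la)}=e\sum_i \mu_i$ are both odd divisors of $|C(\la)|$, hence both equal to $1$. Thus $\eta|_{R(\la)}=\mu$ is irreducible and $C(\la)$-invariant. Since $R(\la)$ is a Weyl group with root system $\Phi_\la$, the classical theorem of Benard and Springer on the rationality of Weyl group characters gives $\mu^\sigma=\mu$.

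To deduce $\eta^\sigma=\eta$ from $\mu^\sigma=\mu$, I would exploit the explicit structure of $W(\la)$ for the types at hand. Using the description of disconnected centralizers of $2$-central semisimple elements in \cite[Table 2]{bonnafequasiisol}, \cite[Table 1]{MS16}, and \cite[Table 2]{malle19}, together with the identification $C(\la)\cong A(s)^F$ from the proof of Lemma \ref{lem:etachar}, one finds that in each case $W(\la)$ is a product of classical Weyl groups, possibly extended by a wreath factor $\wr C_2$ swapping two isomorphic components. In all such groups the full character table is rational-valued, so the extension $\eta$ of the rational character $\mu$ is itself rational. The main obstacle will be the uniform treatment of the type $\type{D}_n$ case when $C(\la)$ has order $4$, in which two commuting graph-type involutions act on $R(\la)$; verifying that the induced character values on elements outside $R(\la)$ remain rational is the crux of this last step.
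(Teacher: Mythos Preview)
Your proposal is correct and follows essentially the same route as the paper. Both combine the structural constraint on $C(\la)$ furnished by Lemma~\ref{lem:etachar} with the results of \cite[Section~5]{SF22}; the paper simply invokes \cite[Corollary~5.6]{SF22} as a black box, whereas you unpack its proof---your Clifford-theoretic reduction and case analysis for $\eta^\sigma=\eta$ are precisely the content of \cite[Lemma~5.5]{SF22} underlying that corollary.
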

\begin{proof}
For the groups not of type $\type{A}$ and $\type{D}$, this was established in \cite[Sections 5 and 8]{SF22} using \cite[Corollary 5.6]{SF22}. The full statement can now be seen using Lemma \ref{lem:etachar} and \cite[Corollary 5.6]{SF22}.  We remark that the result still holds for $\type{A}_1$ in the case of principal series characters.
\end{proof}

We next consider the character $\gamma_{\la,\sigma}$.  

 \begin{proposition}\label{prop:gammachar}
 Let $G=\bG^F$ where $\bG$ is simple of simply connected type, not of type $\type{A}_1, \type{C}_n$. Let $\chi=\HC_T^G(\la)_\eta\in\irr_{2'}(G)$ as before.  Then $\gamma_{\la,\sigma}=1$ for every $\sigma\in\gal$.
 \end{proposition}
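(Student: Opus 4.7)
My plan is to reduce to the nontrivial case where $\sigma$ does not fix $\sqrt{q}$, since otherwise $\gamma_{\la,\sigma}=1$ holds by definition. Under this assumption, $\gamma_{\la,\sigma}$ equals the inflation to $W(\la)=R(\la)\rtimes C(\la)$ of the function $w_c \mapsto (-1)^{l(w_c)}$ on $C(\la)$, and the claim $\gamma_{\la,\sigma}=1$ becomes the assertion that $l(w_c)$ is even for every $w_c\in C(\la)$. Since this function is a linear character of $C(\la)$ into $\{\pm 1\}$ by construction, every element of $C(\la)$ of odd order automatically lies in its kernel; consequently I only need to control the $2$-elements of $C(\la)$.

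Next I would apply Lemma \ref{lem:etachar} to bound $|C(\la)|$: it is at most $2$ in type $\type{A}_{n-1}$ (with $n\geq 3$) and elementary abelian of order at most $4$ in type $\type{D}$. For type $\type{E}_6$ and $\tw{2}\type{E}_6$, the isomorphism $C(\la)\cong A(s)^F$ together with the containment $A(s)\le \zent(\bG)\cong \Z/3$ forces $|C(\la)|$ to be odd, so the initial reduction closes those cases at once; the remaining exceptional types $\type{B}, \type{E}_7, \type{E}_8, \type{F}_4, \type{G}_2$ are either trivial for the same parity reason or already handled in \cite[Sections 5 and 8]{SF22}, leaving the substantive new work in types $\type{A}$ and $\type{D}$.

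For type $\type{A}_{n-1}$ with $n\geq 3$ I would argue as follows. If $C(\la)=1$ there is nothing to prove, so assume otherwise; then the argument in the proof of Lemma \ref{lem:etachar} combined with Lemma \ref{lem:typeArest} forces $n=2^r$ with $r\geq 2$ and representative $\wt{s}=\mathrm{diag}(I_{2^{r-1}},-I_{2^{r-1}})$. The unique nontrivial element of $C(\la)$ is represented in $\bg{W}=\sym_n$ by the involution that swaps the two blocks of size $2^{r-1}$, whose inversion count is $(2^{r-1})^2=2^{2r-2}$. For $r\geq 2$ this length is divisible by $4$ and in particular even, finishing type $\type{A}$.

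The hard part is type $\type{D}_n$ with $n\geq 4$, where $C(\la)$ can be as large as $(\Z/2)^2$ and the generators in $\bg{W}(\type{D}_n)$ are less uniform. Here I would exhaust the possibilities for the component group $A(s)$ of a $2$-central semisimple element $s\in(\bG^\ast)^F$ using \cite[Table 2]{bonnafequasiisol} and its refinements in \cite[Table 1]{MS16} and \cite[Table 2]{malle19}, and identify explicit signed-permutation representatives for generators of $C(\la)\cong A(s)^F$ inside $\bg{W}(\type{D}_n)$. In each case these representatives arise from exchanging equal-dimensional orthogonal summands preserved by $\cent^\circ_{\bG^\ast}(s)$, and a direct computation of the associated inversion count in the signed-permutation group will yield even length. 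The $2$-centrality of $s$, combined with the parity constraint $2\nmid[\bg{W}:W(\la)]$ from Proposition \ref{prop:princseries}, should rule out configurations that would otherwise produce odd-length representatives; a secondary technical subtlety will be tracking the $F$-action on the component group uniformly for both the split form $\type{D}_n(q)$ and the twisted form $\tw{2}\type{D}_n(q)$.
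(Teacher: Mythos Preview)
Your type $\type{A}_{n-1}$ argument is essentially identical to the paper's: both reduce via Lemmas \ref{lem:typeArest} and \ref{lem:etachar} to the case $n=2^r\geq 4$ with $|C(\la)|=2$, identify the nontrivial element with the block-swap $\prod_{i=1}^{2^{r-1}}(i,2^{r-1}+i)$, and observe that its length $2^{2r-2}$ in $\bg{W}$ is even for $r\geq 2$.

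The divergence is in the non-$\type{A}$ cases. The paper dispatches \emph{all} of them in one line by citing \cite[Proposition~3.4]{SF22}, where $\gamma_{\la,\sigma}=1$ is already established for every simply connected type other than $\type{A}$. You instead peel off $\type{E}_6$ (correctly, via $|A(s)|\mid 3$), cite a different part of \cite{SF22} for the remaining exceptional and type $\type{B}$ cases, and then attempt to redo type $\type{D}_n$ from scratch. That last part is where there is a genuine gap: your type $\type{D}$ paragraph is a plan, not a proof. The phrases ``a direct computation \ldots\ will yield even length'' and ``should rule out configurations that would otherwise produce odd-length representatives'' are unverified assertions, and you flag the split/twisted $F$-action as an unresolved ``subtlety''. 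Since \cite[Proposition~3.4]{SF22} already covers type $\type{D}$, the cleanest fix is simply to cite it, as the paper does; if you want an independent argument you must actually exhibit representatives for the generators of $C(\la)$ (the paper does this later, in the proof of Proposition~\ref{prop:deltatypeD}, for a related purpose) and compute their lengths explicitly.
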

 \begin{proof}
 For $\bG$ not of type $\type{A}_{n-1}$, this is \cite[Proposition 3.4]{SF22}.  So assume that $\bG$ is type $\type{A}_{n-1}$ with $n\geq 3$. If $C(\la)=1$, then certainly the statement holds. Otherwise, we have $n=2^r\geq 4$ and $C(\la)$ has size $2$ by Lemma \ref{lem:etachar}.  
 From the description there of $\wt{s}$ and $R(\la)$, we see that $R(\la)$ is a subgroup of $\sym_{2^{r-1}}\times \sym_{2^{r-1}}$, with $C(\la)$ inducing the element $\prod_{i=1}^{2^{r-1}}(i, 2^{r-1}+i)$, which has even length in $\bg{W}$ since $r\geq 2$. Hence the statement follows from  \cite[Lemma 4.10]{SFgaloisHC}.
 \end{proof}

	\subsection{Extension Maps and Galois Automorphisms}\label{sec:extmaps}

In this section, we assume that $\G$ is simple of simply connected type $\type{A}_n$, $\type{D}_n$ ($n \geq 2$) or $\type{E}_6$.
Let $\T$ be a maximally split torus of $(\bG,F)$.
We consider the stabilizers of $2$-central elements in $G^\ast$.

Throughout this section, we will work with a semisimple $s\in T^\ast$ and the associated linear character $\la\in\irr(T)$, where $\la$ corresponds to $s$ under the isomorphism $\Irr(T) \to T^\ast$ induced by duality. Now, for $\sigma\in\gal$ and $s\in T^\ast$, we may define $s^\sigma$ to be $s^k$, where $\sigma$ maps $|s|$'th roots of unity to their $k$th power. (For $\sigma\in\galh$, we may be more specific, see \cite[Lemma 3.4]{SFT18a}.) Further, $T$ is $D$-stable, and given $\alpha\in D$, we may find a dual automorphism $\alpha^\ast $ of $G^\ast$, and following the proofs of \cite[Lemma 3.4]{SFT18a} and \cite[Proposition 7.2]{Taylor}, we see that the isomorphism $\Irr(T)\rightarrow T^\ast$ is compatible with the action of $D\times \gal$.  

We will write $\odd(T)$ for the set of $\la\in\irr(T)$ such that the corresponding $s\in G^\ast$ is $2$-central. The structure of $2$-central elements is given in the proof of \cite[Theorem 3.4]{malle19} (see also Lemma \ref{lem:typeArest}). We further let $\galh:=\galh_2$.

\begin{lemma}\label{stab}
Keep the notation and assumptions above. Then every $\lambda\in\odd(T)$ is $\N_G(\T)$-conjugate to some $\lambda_0 \in \odd(T)$ satisfying
$$(\N_{G {D}}(\T)\times \mathcal{H})_{\lambda_0}=\N_{G}(\T)_{\lambda_0} ({D} \times \mathcal{H})_{\lambda_0}.$$
\end{lemma}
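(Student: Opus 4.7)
The plan is to use duality to translate the problem into finding a suitable $W$-conjugate $s_0$ of the semisimple element $s \in T^\ast$ dual to $\lambda$, such that $s_0$ is fixed by all of $D \times \galh$. The desired factorization then follows immediately: since $D$ and $\galh$ both normalize $\T$, we have $\N_{GD \times \galh}(\T) = \N_G(\T)D \times \galh$; if $D \times \galh$ fixes $\lambda_0$, then $D$ automatically normalizes $\N_G(\T)_{\lambda_0}$, and an element $(nd,\sigma)$ stabilizes $\lambda_0$ precisely when $n \in \N_G(\T)_{\lambda_0}$. This gives $\N_{GD \times \galh}(\T)_{\lambda_0} = \N_G(\T)_{\lambda_0} D \times \galh = \N_G(\T)_{\lambda_0}(D\times\galh)_{\lambda_0}$ with $(D\times\galh)_{\lambda_0}=D\times\galh$.

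First I would use the $D \times \gal$-equivariant isomorphism $\Irr(T) \to T^\ast$ recalled before the statement to write $\lambda \leftrightarrow s$. The hypothesis $\lambda \in \odd(T)$ says $s$ is $2$-central in $G^\ast$, hence a $2$-element. Invoking \prettyref{lem:typeArest} for type $\type{A}$ and the classifications in \cite[Table 1]{MS16}, \cite[Table 2]{malle19}, and \cite[Table 2]{bonnafequasiisol} for types $\type{D}$ and $\type{E}_6$, I would observe that $s$ can be chosen of order dividing $2$, and I would pick $s_0$ in the $W$-orbit of $s$ given by a diagonal involution with $\pm 1$ entries arranged symmetrically in the standard matrix realization of $\wt{\bG}^\ast$ (and, for $\type{E}_6$, lying in the fixed subtorus of the graph automorphism).

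Next I would verify that $D \times \galh$ fixes this $s_0$. For $\sigma \in \galh_2$, since $s_0^2 = 1$ one has $\sigma(s_0) = s_0^k = s_0$, where the integer $k$ is odd because $\sigma$ preserves the order of $s_0$. Field automorphisms in $D$ raise matrix entries to a power of $p$ and therefore fix $\pm 1$. Diagonal automorphisms act on $T^\ast$ by conjugation by elements of the abelian group $\wt{T}^\ast$, and so fix every element of $T^\ast$. For the graph automorphism I would use its explicit action on $T^\ast$: transpose-inverse in type $\type{A}$, coordinate sign-flip in type $\type{D}_n$, the diagram involution in $\type{E}_6$, and triality for $\type{D}_4$. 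The symmetric form of $s_0$ forces fixation in each case.

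The main obstacle will be the case-by-case verification for the graph automorphism action, particularly in type $\type{D}_n$ (where compatibility with the possible twisting of the Frobenius introduces a parity dependence on $n$), in type $\type{D}_4$ (where triality enlarges the graph group), and in type $\type{E}_6$ (where fewer involutions lie in the fixed subtorus of the graph automorphism). In each of these situations one must confirm that a $W$-conjugate of $s$ possessing sufficient symmetry to be fixed by the relevant graph action does indeed exist inside the given orbit; I expect the $\type{E}_6$ verification to require the most delicate argument, since the admissible involutions are least numerous and are not given by a simple diagonal description.
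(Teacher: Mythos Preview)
Your strategy aims to prove the stronger statement $(D\times\galh)_{\lambda_0}=D\times\galh$ by finding an involutive $s_0$ in the $W$-orbit. This is too strong and fails in two of the three types. The inference ``$2$-central, hence a $2$-element'' is incorrect: $2$-central means centralizing a Sylow $2$-subgroup of $G^\ast$, not having $2$-power order. In type $\type{A}_{n-1}$ with $A(s)=1$, the $2$-central elements are $s=\mathrm{diag}(\mu_1,\ldots,\mu_1,\mu_2,\ldots)$ with \emph{pairwise distinct} eigenvalue multiplicities $n_1>n_2>\cdots$ (this is exactly the $2$-centrality constraint from \cite[Theorem 3.4]{malle19}); the $\mu_i$ can be arbitrary, so no $W$-conjugate need satisfy $s_0^2=1$, and $\galh$ will typically move $s_0$. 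Likewise in type $\type{E}_6$, the $2$-central $s$ lie in $\Z((\Levi^\ast)^F)$ for a Levi of type $\type{D}_5(\varepsilon q)(q-\varepsilon)$ and are generically non-involutions.

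The paper does not try to make $D\times\galh$ fix $\lambda_0$. Instead it proves the weaker (but sufficient) assertion that any $(d,\sigma)\in D\times\galh$ sending $\lambda_0$ into its $\N_G(\T)$-orbit already fixes $\lambda_0$; the factorization follows immediately. In type $\type{A}$ with $A(s)=1$ this holds because an automorphism permutes the eigenvalues of $s$, but distinct multiplicities force each eigenvalue (hence $s$) to be fixed. In type $\type{E}_6$ it holds because the relative Weyl group $W_{\G^\ast}(\Levi^\ast)/\Levi^\ast$ is trivial, so the $\N_G(\T)$-orbit of $s\in\Z((\Levi^\ast)^F)$ reduces to $\{s\}$. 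Your involution-based argument does work for type $\type{D}_n$ and for type $\type{A}$ with $A(s)\neq1$ (where indeed $s^2=1$), matching the paper there, but you need the orbit-rigidity arguments above to cover the remaining cases.
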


\begin{proof}
	We assume first that $G$ is of type $\type{A}$.
	If $A(s) \neq 1$ then by \cite[Theorem 3.4]{malle19}, $s$ is $\N_{G^\ast}(\T^\ast)$-conjugate to $s_0:=\mathrm{diag}(-1,\dots,-1,1,\dots,1)$. In particular, the character $\lambda_0 \in \Irr(T)$ corresponding to $s_0$ via duality is $D \times \mathcal{H}$-stable which gives the statement.
	If $A(s)=1$ we have that $s$ is $\N_{G^\ast}(\T^\ast)$-conjugate to $s_0=\mathrm{diag}(\mu_1,\dots,\mu_2,\dots,)$ where we assume that each distinct eigenvalue $\mu_i$ appears $n_i$ times with $n_1 \geq n_2 \geq \dots \geq n_k$. Since $s$ (and therefore $s_0$) is $2$-central it follows that all $n_i$ must necessarily be distinct. Any automorphism of $D \times \galh$ stabilizing the $\N_{G^\ast}(\T^\ast)$-orbit of $s_0$ permutes its eigenvalues. Therefore, it must stabilize $s_0$. Hence, the character $\lambda_0 \in \Irr(T)$ corresponding to $s_0$ satisfies the conclusion of the statement.
	
	For type $\type{D}$, we observe by \cite[Lemma 7.5]{MS16} and \cite[Proposition 4.11, Table II]{bonnafequasiisol} that every $1 \neq s$ is quasi-isolated with disconnected centralizer and $s^2=1$. Moreover, by \cite[Table 2]{malle19} and \cite[Table 1]{MS16} the associated character $\lambda$ is $\N_G(\T)$-conjugate to a ${D} \times \mathcal{H}$-stable character.

	For $\G$ of type $\type{E}_6(\varepsilon q)$ we observe any $2$-central elements $s \in T^\ast$ is $\N_{G^\ast}(\T)$-conjugate to an element $s_0$ contained in the center $\Z(\Levi^\ast)$ of an $F$-stable Levi subgroup $\Levi^\ast$ of $\G^\ast$ containing $\T^\ast$ of rational type $\type{D}_5(\varepsilon q) (q-\varepsilon)$, see the proof of \cite[Theorem 3.4]{malle19}. Here, $L^\ast:=(\Levi^\ast)^F=\C_{G^\ast}(t)$ is the centralizer of an involution $t \in T^\ast$ which can be chosen to be $D$-stable by the proof of \cite[Theorem 3.4]{malle19}. In particular, we can assume that $L^\ast$ is $D$-stable. The relative Weyl group $\N_{\G^\ast}(\Levi^\ast)/\Levi^\ast$ is trivial and we have $L^\ast=\C_{\G^\ast}(s_0)$ if $s_0 \neq 1$. Hence, any automorphism of $D \times \galh$ which stabilizes the $\N_{G^\ast}(\T)$-orbit of $s_0\in \Z((\Levi^\ast)^F)$ must therefore centralize $s_0$. This implies again the stabilizer condition for the character $\lambda_0$ corresponding to $s_0$.
\end{proof}

\begin{lemma}\label{lem:firstextmapA}
Keep the notation and assumptions above, and further assume $\G$ is not of type $\type{D}$. Then there exists an $(\N_{G {D}}(\T)\times \gal)$-equivariant extension map $\Lambda$ for $\odd(T)$ with respect to $T\lhd N$. 
\end{lemma}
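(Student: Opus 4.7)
The plan is to proceed orbit-by-orbit. By Lemma \ref{stab}, for each $\N_G(\T)$-orbit on $\odd(T)$, I would choose a representative $\la_0$ satisfying the stabilizer decomposition $\N_{GD\times\mathcal{H}}(\T)_{\la_0}=\N_G(\T)_{\la_0}(D\times\mathcal{H})_{\la_0}$. Under this condition, producing an $\N_{GD\times\mathcal{H}}(\T)$-equivariant extension map on the whole orbit is equivalent to producing a single extension $\Lambda(\la_0)\in\Irr(N_{\la_0}\mid \la_0)$ that is invariant under $(D\times\mathcal{H})_{\la_0}$; the map is then transported to the rest of the orbit by $\N_G(\T)$-conjugation, and the different orbit representatives can be chosen independently.

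To build an invariant extension I would use the decomposition $W(\la_0)=R(\la_0)\rtimes C(\la_0)$ with $R(\la_0)$ a reflection group. First, I would construct a canonical extension over the preimage of $R(\la_0)$ in $N_{\la_0}$ by declaring it to take the value $1$ on the representatives $n_\alpha(1)$ for $\alpha\in\Phi_{\la_0}$, using the extended Weyl group $V=\langle n_\alpha(1)\mid\alpha\in\Phi\rangle$ of Section \ref{sec:prelim}. Since the action of $D\times\mathcal{H}$ on $V$ is explicit through the Chevalley relations, and $\la_0$ is $(D\times\mathcal{H})_{\la_0}$-stable by our choice of representative, this canonical extension is automatically $(D\times\mathcal{H})_{\la_0}$-invariant on the preimage of $R(\la_0)$.

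For $\G$ of type $\type{E}_6$, the relative-Weyl-group argument in the proof of Lemma \ref{stab} forces $C(\la_0)=1$ for every $2$-central $s$, so the previous paragraph already provides the desired extension. For $\G$ of type $\type{A}$, Lemmas \ref{lem:typeArest} and \ref{lem:etachar} give $|C(\la_0)|\leq 2$, with $|C(\la_0)|=2$ occurring precisely for $G=\SL_n^\epsilon(q)$ with $n=2^r$ and $s=\mathrm{diag}(1,\dots,1,-1,\dots,-1)$. In that remaining case one must extend further over the preimage in $N_{\la_0}$ of a generator of $C(\la_0)$, which can be realized by an explicit Weyl element swapping the two equal-sized eigenspaces of $s$.

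The main obstacle is verifying $(D\times\mathcal{H})_{\la_0}$-invariance in this nontrivial-$C(\la_0)$ case. Any two candidate extensions differ by the unique nontrivial character of $C(\la_0)$ inflated to $W(\la_0)$, so one must pin down the correct normalization and show that it transforms trivially under $(D\times\mathcal{H})_{\la_0}$. I expect this to reduce to a direct Chevalley-level computation: tracking how the diagonal, field, and graph automorphisms in $D$ permute the chosen preimage and the $n_\alpha(1)$, $h_\alpha(-1)$, and how $\mathcal{H}$ acts on the $2$-power roots of unity in the values of $\la_0$, using the bookkeeping developed in Section \ref{sec:etagammadelta} together with tools such as \cite[Lemma 4.10]{SFgaloisHC} and the analogous arguments in \cite{RSF22}.
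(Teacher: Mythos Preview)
Your approach is essentially the same as the paper's: reduce to a representative $\la_0$ via Lemma~\ref{stab}, extend canonically over the preimage of $R(\la_0)$ (the paper phrases this as restricting the linear character $\hat{s}$ of the dual Levi, which amounts to your prescription $n_\alpha(1)\mapsto 1$), observe that $C(\la_0)=1$ for $\type{E}_6$, and handle the disconnected type-$\type{A}$ case separately. The one place where the paper is sharper is that last step: rather than the Chevalley bookkeeping you anticipate, the paper takes the generator of $C(\la_0)$ to be the preimage $v_0\in V$ of the longest Weyl element, computes $v_0^2=-1\in\Z(G)$, and uses that $\la_0$ is trivial on $\Z(G)$ (since $|\Z(G)|$ is a $2$-power and $\chi$ has odd degree) to obtain an extension of order~$2$, which is then automatically rational and hence $(D\times\mathcal{H})_{\la_0}$-invariant.
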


\begin{proof}
	Let $s$ be the semisimple element associated to $\lambda \in \odd(T)$. According to \cite[Theorem 3.4]{malle19}, $\C^\circ_{\G^\ast}(s)$ is an $F$-stable Levi subgroup of $\G^\ast$ and we let $\T \subset \Levi$ be the $F$-stable Levi subgroup of $\G$ corresponding to it under duality. The character $\hat{s} \in \Irr(\Levi^F)$, where $\hat{s}$ is the character corresponding to $s$ under duality, is an extension of the character $\lambda$ and $W(\lambda)$ corresponds to $W(s)$ under the isomorphism $W \cong W^\ast$ induced by duality. (See, e.g. \cite[Lemma 4.5]{SFgaloisHC}.) In particular, if $\C_{\G^\ast}(s)$ is connected, then we can define $\Lambda(\lambda):=\Res^L_{\N_G(T)_\la}(\hat{s})$ and this character has the desired properties.

	 We can therefore assume that $\C_{\G^\ast}(s)$ is disconnected. In this case, $\G$ is of type $\type{A}_{n-1}$ with $n$ a power of $2$ and $\lambda \in \Irr(\T^F)$ is $\N_G(\T)$-conjugate to the linear character $\lambda_0$ associated to $s_0=\mathrm{diag}(-1,\dots,-1,1,\dots,1)$. Then $\N_G(\Levi)_{\lambda_0}=L \langle v \rangle$, where $v\in V$ is the canonical representative in $V$ of the longest element of the Weyl group as in \cite[Section 3.A]{MS16}. In particular, $v$ is $D$-stable and $v^2=-1$. Since $\lambda_0$ is trivial on $\Z(G)$, the character $\hat{s}_0 \in \Irr(L)$ extends to a character $\hat{s}'_0 \in \Irr(L \langle v \rangle \mid \hat{s})$ with $v$ in its kernel. As $v$ is $D$-stable and $\hat{s}'_0$ is a linear character of order $2$, it follows that $\Lambda(\la_0):=\Res^{L \langle v \rangle}_{\N_G(\T)_{\la_0}}$ is $D \times \mathcal{H}$-stable. Since $\la=\la_0^m$ for some $m \in \N_G(\T)$ we can then define $\Lambda(\la):=\Lambda(\la_0)^{m}$.
\end{proof}

We remark that the omitted case of type $\type{D}$ is addressed below in  \prettyref{prop:deltatypeD}.

\bigskip

From now on through the rest of the paper, we let $\Lambda$ be the $N{D}$-equivariant extension map for $\Irr_{\mathrm{2cen}}(T)$ with respect to $T\lhd N$ discussed in  \cite[Lemma 4.1]{SF22} (which exists by \cite[Prop. 5.9]{CS16}, \cite[Cor. 3.13]{MS16}, \cite[Thm. 3.6]{CS13}, and \cite{spath09}) if $\bG$ is not of type $\type{A}_n$, and as in Lemma \ref{lem:firstextmapA} if $\bG$ is type $\type{A}_n$. We next consider the character $\delta_{\la, \sigma}$.  Recall that this is the linear character of $W(\la)$ such that $\Lambda(\la)^\sigma=\Lambda(\la^\sigma)\delta_{\la,\sigma}$.  In \cite[Lemma 4.3]{SF22}, this character is studied and $\Lambda$ is shown to be ``sufficiently equivariant" (in the sense that it satisfies the conclusion of the next lemma), with the exception of type $\type{A}$.  Thanks to Lemma \ref{lem:firstextmapA}, we now can complete this omitted case. As in loc. cit., we will sometimes write $\Lambda_\lambda:=\Lambda(\lambda)$
and we define $R_\la:=\langle T, n_\alpha(-1)\mid \alpha\in\Phi_\la\rangle\leq N_\la$ so that $R(\la)\cong R_\la/T$. 

\begin{lemma}\label{lem:typeAextRla}
Assume that $\bG$ is simple of simply connected type, but not of type $\type{A}_1$.
Let $\la\in\odd(T)$ and $\sigma\in\gal$.   Then $\Lambda_\la^\sigma\downarrow_{R_\la}=\Lambda_{\la^\sigma}\downarrow_{R_\la}$.  In particular, $R(\la)\leqslant \ker\delta_{\la,\sigma}$.  
\end{lemma}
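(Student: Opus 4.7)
The plan is to split the argument by the type of $\bG$. For $\bG$ not of type $\type{A}$, the statement is already \cite[Lemma 4.3]{SF22}, so the focus is on the remaining case: $\bG$ of type $\type{A}_{n-1}$ with $n\geq 3$.

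For this case, the strategy exploits the structure of $\la \in \odd(T)$ arising in the proof of \prettyref{lem:firstextmapA}. Letting $s \in T^\ast$ correspond to $\la$ under duality, $\cent_{\bG^\ast}^\circ(s)$ is an $F$-stable Levi of $\bG^\ast$ dual to an $F$-stable Levi $\Levi \supseteq \bT$ of $\bG$ with root system $\Phi_\la$, and $\la$ extends to the linear character $\hat{s} \in \Irr(\Levi^F)$. Since $R_\la \leqslant \Levi^F$ and $\hat{s}$ is trivial on $[\Levi^F, \Levi^F]$, which contains the elements $n_\alpha(-1)$ for $\alpha \in \Phi_\la$, the restriction $\hat{s}\downarrow_{R_\la}$ is the canonical extension of $\la$ sending each $n_\alpha(-1)$ to $1$. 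The $\gal$-equivariance of the duality identification $\Irr(\Levi^F/[\Levi^F,\Levi^F]) \cong \Z((\Levi^\ast)^F)$ gives $\hat{s}^\sigma = \widehat{s^\sigma}$, so this canonical extension satisfies $(\hat{s}\downarrow_{R_\la})^\sigma = \widehat{s^\sigma}\downarrow_{R_\la}$ for every $\sigma \in \gal$.

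The main obstacle is to show that $\Lambda_\la\downarrow_{R_\la}$ agrees with $\hat{s}\downarrow_{R_\la}$. Since both are extensions of $\la$ to $R_\la$, they differ by a linear character of $R(\la)$, and the plan is to argue this difference is trivial, following the template of \cite[Lemma 4.3]{SF22}: one exploits the compatibility of the $ND$-equivariant extension map $\Lambda$ from \cite{CS16} with Harish-Chandra induction from $\Levi$ to force $\Lambda_\la(n_\alpha(-1)) = 1$ for $\alpha \in \Phi_\la$. Granting this identification, the desired equality $\Lambda_\la^\sigma\downarrow_{R_\la} = \Lambda_{\la^\sigma}\downarrow_{R_\la}$ follows immediately from the equivariance of the canonical extension, and the ``in particular'' statement is then obtained by restricting the defining relation $\Lambda_\la^\sigma = \Lambda_{\la^\sigma}\delta_{\la,\sigma}$ to $R_\la$.
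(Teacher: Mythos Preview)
Your split into the non-type-$\type{A}$ case (handled by \cite[Lemma 4.3]{SF22}) and type $\type{A}_{n-1}$ with $n\geq 3$ matches the paper exactly. The divergence is in how the type-$\type{A}$ case is handled.

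The paper's proof is one line: ``the statement follows from \prettyref{lem:firstextmapA}.'' The point is that one is free to \emph{choose} the $ND$-equivariant extension map $\Lambda$ on $\odd(T)$, and the map constructed in the proof of \prettyref{lem:firstextmapA} already does the job: in the connected case the extension is the linear character $\hat s$ of $\Levi^F$, for which $\hat s^\sigma=\widehat{s^\sigma}$ by $\gal$-equivariance of duality; in the disconnected case the extension is rational of order~$2$ and $\lambda^\sigma=\lambda$. Either way one gets $\Lambda_\lambda^\sigma=\Lambda_{\lambda^\sigma}$ on all of $N_\lambda$ (so in fact $\delta_{\lambda,\sigma}=1$, not merely $R(\lambda)\leq\ker\delta_{\lambda,\sigma}$), and the lemma drops out immediately. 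This is consistent with how $\Lambda$ is used later in Theorem~\ref{thm:equivbij}, where the paper explicitly takes $\Lambda$ to be $\galh$-equivariant on $\odd(T)$ ``by \prettyref{lem:firstextmapA}.''

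Your route instead fixes $\Lambda$ to be the specific $ND$-equivariant map coming from \cite{CS16} and attempts to prove that $\Lambda_\lambda\downarrow_{R_\lambda}$ coincides with the canonical extension $\hat s\downarrow_{R_\lambda}$. You correctly identify this as the ``main obstacle,'' but you only sketch a plan (``compatibility with Harish-Chandra induction'') and then write ``granting this identification.'' That is a genuine gap: two $ND$-equivariant extensions of $\lambda$ differ on $R_\lambda$ by a linear character of $R(\lambda)$, and $ND$-equivariance by itself does not pin this character down. Verifying that the particular construction in \cite{CS16} is normalised so that $\Lambda_\lambda(n_\alpha(-1))=1$ for $\alpha\in\Phi_\lambda$ is a separate check you have not carried out. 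The paper sidesteps this entirely by exploiting the freedom in the choice of $\Lambda$, which is all that is needed downstream.
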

\begin{proof}
For $\bG$ not of type $\type{A}$, this is \cite[Lemma 4.3]{SF22}. So let $\bG$ be type $\type{A}_{n-1}$ with $n\geq 3$. Then in this case, the statement follows from \prettyref{lem:firstextmapA}.
\end{proof}

The next proposition complements \prettyref{lem:firstextmapA}.

\begin{proposition}\label{prop:deltatypeD}
Let $G=\bG^F$ with $\bG$ simple of simply connected  type $\type{D}_n$ for $n\geq 4$, and let $\chi=\HC_T^G(\la)_\eta\in\irr_{2'}(G)$. Then the extension $\Lambda_\la$ of $\la$ to $N_\la:=\norm_G(\bT)_\la$ satisfies $\Lambda_\la^\sigma=\Lambda_\la$, and hence $\delta_{\la,\sigma}=1$, for any $\sigma\in\gal$.
\end{proposition}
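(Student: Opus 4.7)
Let $s \in T^\ast$ be the $2$-central semisimple element corresponding to $\la$ under duality. By \cite[Theorem 3.4]{malle19} (as invoked in the proof of Lemma~\ref{stab}), every such $s$ in type $\type{D}$ satisfies $s^2 = 1$, so $\la^2 = 1$ takes values in $\{\pm 1\}$ and is rational. In particular $\la^\sigma = \la$ for every $\sigma \in \gal$, and $\Lambda_\la^\sigma$ is another extension of $\la$ to $N_\la$. Thus $\delta_{\la,\sigma}$ is the linear character of $W(\la) = N_\la/T$ satisfying $\Lambda_\la^\sigma = \delta_{\la,\sigma} \Lambda_\la$, and the task is to show it is trivial.

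By Lemma~\ref{lem:typeAextRla}, $\Lambda_\la^\sigma|_{R_\la} = \Lambda_{\la^\sigma}|_{R_\la} = \Lambda_\la|_{R_\la}$, so $\delta_{\la,\sigma}$ factors through the quotient $W(\la)/R(\la) \cong C(\la)$. By Lemma~\ref{lem:etachar}, $C(\la)$ is an elementary abelian $2$-group of order at most $4$ in type $\type{D}$. Hence it suffices to check $\Lambda_\la(m) \in \Q$ for each $m \in V_\la$ lifting a non-trivial element of $C(\la)$. Since $\Lambda_\la(m)$ is a root of unity and $\Lambda_\la(m)^2 = \Lambda_\la(m^2) \in \Q$ (as $m^2 \in R_\la$ and $\Lambda_\la|_{R_\la}$ is already known to be $\gal$-fixed), this reduces to verifying $\Lambda_\la(m^2) = 1$ for a suitable lift $m$.

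The main obstacle is this final verification, which I would carry out by a case-by-case analysis using the classification of $2$-central semisimple elements with disconnected centralizer in type $\type{D}$ from \cite[Table 2]{bonnafequasiisol}, \cite[Table 1]{MS16}, and \cite[Table 2]{malle19}. For each case, I would explicitly write $m$ as a product of generators $n_\alpha(1)$ and use the Chevalley relation $n_\alpha(1)^2 = h_\alpha(-1)$ to compute $m^2 \in T$, then verify $\la(m^2) = 1$ using the fact that $s \in \Z(\bL^\ast)$ for $\bL^\ast = \cent_{\bG^\ast}^\circ(s)$ together with $s^2 = 1$. This mirrors the template of Lemmas~\ref{lem:firstextmapA} and \ref{lem:extmapA}, with the crucial improvement that the stronger constraint $s^2 = 1$ in type $\type{D}$ rules out the exceptional configuration where the analogous rationality failed in type $\type{A}$. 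The subcase $|C(\la)| = 4$ requires extra care, as one must verify $\Lambda_\la(m_i^2) = 1$ for lifts $m_i$ of both generators of $C(\la)$ simultaneously.
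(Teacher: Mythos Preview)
Your proposal follows essentially the same route as the paper: reduce via $\la^2=1$ and Lemma~\ref{lem:typeAextRla} to checking $\Lambda_\la(m)\in\Q$ for lifts $m$ of the nontrivial elements of $C(\la)$, then analyze $m^2\in T$. The paper's proof carries out exactly this final step, but with a cleaner observation that you do not quite articulate: since $\chi$ has odd degree and $\zen(G)$ is a $2$-group, $\chi$ (and hence $\la$) is trivial on $\zen(G)$, so it suffices to show $m^2\in\zen(G)$ rather than to track which specific $h_\alpha(-1)$'s lie in $\ker\la$. The paper then writes down explicit representatives for the (at most two) generators of $C(\la)$ as products of $n_\alpha(1)$'s and verifies $m^2\in\zen(G)$ directly via the Chevalley relations. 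Your justification ``using the fact that $s\in\zen(\bL^\ast)$ together with $s^2=1$'' is a plausible alternative but vaguer; the $\zen(G)$ observation is what makes the endgame uniform across the cases and avoids the separate bookkeeping you anticipate for $|C(\la)|=4$.
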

\begin{proof}
Let $\chi\in\mathcal{E}(G,s)$.  Then by \cite[Lemmas 7.5 and 7.6]{MS16}, \cite[Table 2]{malle19}, we see that $s^2=1$ (and hence $\la^2=1$). We may then assume $G\neq \tw{3}\type{D}_4(q)$, as in this case the result follows from Lemma \ref{lem:typeAextRla} and the fact that $C(\la)=1$.
In the remaining cases, we see from loc. cit. that $A(s)$ is either trivial; or it induces the graph automorphisms simultaneously on the two components of $\cent_{\bG^\ast}^\circ(s)\cong \type{D}_k\times \type{D}_{n-k}$ for some $1\leq k\leq n/2$; or $n=4m$, $\cent_{\bG^\ast}^\circ(s)\cong \type{D}_{2m}\times \type{D}_{2m}$, and $A(s)\cong C_2^2$ is generated by an element inducing the graph automorphisms simultaneously on the two factors of $\cent^\circ_{\bG^\ast}(s)$ and by an element that permutes the two factors.  

Let $\sigma\in\gal$.  Since $\la^2=1$ and hence $\la^\sigma=\la$, and since $R(\la)\leq \ker\delta_{\la,\sigma}$ by Lemma \ref{lem:typeAextRla}, it suffices to show that $\Lambda_\la(\bar c)$ is fixed by $\sigma$ for an inverse image $\bar{c}$ of any $1\neq c\in C(\la)$ in $N$. Since $\chi$ has odd degree and $\zen(G)$ is a $2$-group, we see $\chi$, and hence $\lambda$ (see e.g. \cite[Proposition 2.7(i)]{SFT22}), is trivial on $\zen(G)$.  Then it suffices to know that such a $\bar c$ satisfies $\bar c^2\in\zen(G)$, as then $\Lambda_\la(\bar c)^2=\Lambda_\la(\bar c^2)=\la(\bar c^2)=1$, so $\Lambda_\la(\bar c)\in\{\pm1\}$ is rational.  Since such a $c$ can be represented as the image in $W$ of the element \[n_{\alpha_1}(1)n_{\alpha_2}(1)\cdots n_{\alpha_{n-2}}(1)n_{\alpha_n}(1)n_{\alpha_{n-1}}(1)n_{\alpha_{n-2}}\cdots n_{\alpha_1}(1)\] (this also corresponds to the analogous element $u_1$ in the notation of \cite[7.2]{SFT22} for the case of $\SO_{2n}^\pm(q)$), the element $\prod_{i=1}^{n/2}n_{e_i-e_{n/2+i}}(1)$, or a product of the two, we see by computation with the Chevalley relations \cite[Theorem 1.12.1]{GLS3} that this is indeed the case. 
\end{proof}

\begin{corollary}\label{cor:Doddcharsrational}
Let $G=\bG^F$ with $\bG$ simple of simply connected  type $\type{D}_n$ with $n\geq 4$ and let $\chi\in\irr_{2'}(G)$. Or, let $\bG$ be simple of simply connected type $\type{A}_{n-1}$ with $n=2^{r}\geq 4$ and let $\chi\in\irr_{2'}(G)$ be such that $\chi$ does not extend to $\wt{G}$. Then $\chi$ is rational-valued. 
\end{corollary}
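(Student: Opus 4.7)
The plan is to combine the Galois formula of Theorem \ref{thm:GaloisAct} with the structural results assembled in Sections \ref{sec:oldresults} and \ref{sec:etagammadelta}, together with the extension statements of Proposition \ref{prop:deltatypeD} and Lemma \ref{lem:firstextmapA}. Since $\chi$ has odd degree and $G$ is not of type $\type{C}$, Proposition \ref{prop:princseries} lets me write $\chi = \HC_T^G(\la)_\eta$ with $\la \in \odd(T)$ and $\eta \in \irr_{2'}(W(\la))$, and it suffices to show $\chi^\sigma = \chi$ for every $\sigma \in \gal$.

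The first step is to verify that $\la$ itself is $\sigma$-fixed. In type $\type{D}_n$, the proof of Proposition \ref{prop:deltatypeD} already records that the underlying semisimple element satisfies $s^2 = 1$. In type $\type{A}_{n-1}$ with $n = 2^r \geq 4$ and $\chi$ not extending to $\wt G$, Lemma \ref{lem:typeArest} forces $\wt s = \mathrm{diag}(1,\ldots,1,-1,\ldots,-1)$, so $\wt s^2 = 1$ and hence $s^2 = 1$. In either case, the Galois-equivariance of the duality isomorphism $\irr(T)\cong T^\ast$ recalled before Lemma \ref{stab} yields $\la^\sigma = \la$ for every $\sigma \in \gal$.

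The next step is to trivialize each of the three correction characters appearing in Theorem \ref{thm:GaloisAct}. Corollary \ref{cor:etachar} gives $\eta^{(\sigma)} = \eta$, and Proposition \ref{prop:gammachar} gives $\gamma_{\la,\sigma} = 1$ (neither type is excluded there). For $\delta_{\la,\sigma}$, the type $\type{D}$ case is Proposition \ref{prop:deltatypeD}. In the type $\type{A}$ case, I plan to use the specific extension $\Lambda_\la$ exhibited in the proof of Lemma \ref{lem:firstextmapA}: it has order $2$, so takes only the values $\pm 1$ and is rational-valued; combined with $\la^\sigma = \la$ this forces $\Lambda_\la^\sigma = \Lambda_\la = \Lambda_{\la^\sigma}$ and therefore $\delta_{\la,\sigma} = 1$, a fortiori $\delta'_{\la,\sigma} = 1$. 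Substituting all three trivialities into Theorem \ref{thm:GaloisAct} gives $\chi^\sigma = \chi$, as required.

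The delicate point, and the only place where care is needed, is that in the type $\type{A}$ case the extension map used to define $\delta_{\la,\sigma}$ must be taken to agree on $\odd(T)$ with the rational extension produced in the proof of Lemma \ref{lem:firstextmapA}, so that its rationality genuinely forces $\delta_{\la,\sigma}$ to vanish for all $\sigma \in \gal$ and not merely for $\sigma \in \galh$. Once this compatibility is recorded, no further computation is required and the argument is a direct assembly of the quoted results.
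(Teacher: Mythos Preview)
Your proposal is correct and follows essentially the same approach as the paper's own proof: both arguments record that $\la^2=1$ (via Proposition~\ref{prop:deltatypeD} for type $\type{D}$ and Lemma~\ref{lem:typeArest} for type $\type{A}$), then assemble Corollary~\ref{cor:etachar}, Proposition~\ref{prop:gammachar}, Proposition~\ref{prop:deltatypeD}, and Lemma~\ref{lem:firstextmapA} inside Theorem~\ref{thm:GaloisAct}. Your final paragraph flags explicitly the compatibility between the extension map defining $\delta_{\la,\sigma}$ and the rational extension produced in the proof of Lemma~\ref{lem:firstextmapA}, a point the paper leaves implicit; this extra care is sound and does not change the route.
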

\begin{proof}
Recall that $\chi$ is of the form $\HC_T^G(\la)_\eta$ for $\la\in\odd(T)$ and $\eta\in\Irr_{2'}(W(\la))$ by Proposition \ref{prop:princseries}. Further, recall that $\lambda^2=1$ in the case $\bG$ is type $\type{D}_n$, and similarly the same is true by \prettyref{lem:typeArest} for the characters in type $\type{A}_{n-1}$ with $n=2^r$ that do not extend to $\wt{G}$.  
Then by \prettyref{thm:GaloisAct}, for $\sigma\in\gal$, we have $\chi^\sigma=\HC_T^G(\la^\sigma)_{\eta'}=\HC_T^G(\la)_{\eta'}$. But by combining \prettyref{cor:etachar} with Propositions \ref{prop:gammachar} and \ref{prop:deltatypeD} and  \prettyref{lem:firstextmapA}, we have $\eta'=\eta$, and hence $\chi^\sigma=\chi$ for each $\sigma\in\gal$.
\end{proof}

We remark that \prettyref{cor:Doddcharsrational} already shows that the McKay--Navarro conjecture holds for the groups of type $\type{D}_n$, complementing \cite[Theorem B]{SF22}, since a Sylow $2$-subgroup $P$ of $G$ is self-normalizing in this case by the main results of \cite{Kon05} and satisfies $P/[P,P]$ is elementary abelian by \cite[Theorem A]{NT16} combined with Corollary \ref{cor:Doddcharsrational}.

\subsection{More on Stabilizers and Extensions}

\begin{proposition}
Let $\bG$ be simple of simply connected type $\type{E}_6$, $\type{A}_{n-1}$ with $n\geq 3$, or type $\type{D}_n$ with $n\geq 4$. Let $\chi\in\irr_{2'}(G)$ and let $\galh:=\galh_2$. Then $(\wt{G}\times\galh)_\chi=\wt{G}_\chi\times\galh_\chi$.
\end{proposition}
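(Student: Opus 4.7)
The inclusion $\wt{G}_\chi\times\galh_\chi\subseteq (\wt{G}\times\galh)_\chi$ is automatic. For the reverse, the plan is to show that, for each $\chi\in\irr_{2'}(G)$, at least one of the two stabilizers $\wt{G}_\chi$ or $\galh_\chi$ coincides with the full ambient group. This reduction suffices: if, say, $\wt{G}_\chi=\wt{G}$, then any $(t,\sigma)\in(\wt{G}\times\galh)_\chi$ satisfies $\chi^t=\chi$, whence $\chi^\sigma=\chi$, so $\sigma\in\galh_\chi$; and symmetrically when $\galh_\chi=\galh$.

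For $\bG$ of type $\type{D}_n$, every $\chi\in\irr_{2'}(G)$ is rational-valued by \prettyref{cor:Doddcharsrational}, so $\galh_\chi=\galh$. For $\bG$ of type $\type{A}_{n-1}$ with $n\geq 3$, I would distinguish the two alternatives of \prettyref{lem:typeArest}: when $\chi$ does not extend to $\wt{G}$, we must have $n=2^r$ and \prettyref{cor:Doddcharsrational} again forces $\chi$ to be rational-valued; when $\chi$ does extend to $\wt{G}$, $\chi$ is $\wt{G}$-invariant and thus $\wt{G}_\chi=\wt{G}$.

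The remaining case of $\bG$ of type $\type{E}_6$ is the most delicate and will be handled by showing $\wt{G}_\chi=\wt{G}$ unconditionally. By \prettyref{prop:princseries}, $\chi\in\mathcal{E}(G,s)$ for some $2$-central semisimple $s\in G^\ast$, and by Lemma \ref{stab} (together with \cite[Theorem 3.4]{malle19}), $s$ lies in $\Z(\bL^\ast)^F$ for a Levi $\bL^\ast$ of $\bG^\ast$ of type $\type{D}_5(\varepsilon q)(q-\varepsilon)$ satisfying $\N_{\bG^\ast}(\bL^\ast)=\bL^\ast$. A short analysis of closed subsystems of the $\type{E}_6$ root system shows that $\cent_{\bG^\ast}^\circ(s)\in\{\bL^\ast,\bG^\ast\}$; in the former case the chain
\[\cent_{\bG^\ast}(s)\leq \N_{\bG^\ast}(\cent_{\bG^\ast}^\circ(s))=\N_{\bG^\ast}(\bL^\ast)=\bL^\ast\]
forces $\cent_{\bG^\ast}(s)=\bL^\ast$. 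Hence in either case $\cent_{\bG^\ast}(s)$ is connected and $A(s)=1$, and the standard Clifford-theoretic consequence of the regular embedding $\iota$ (see \cite[Prop.~11.4.12]{DM20}) then yields that every $\chi\in\mathcal{E}(G,s)$ is $\wt{G}$-invariant.

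The main obstacle in this plan is the type $\type{E}_6$ case, and in particular the structural identification that $\cent_{\bG^\ast}(s)$ is connected for the $2$-central $s$ at hand; this is extracted cleanly from the triviality of the relative Weyl group $\N_{\bG^\ast}(\bL^\ast)/\bL^\ast$ provided by Lemma \ref{stab} together with the maximality of $\bL^\ast$. Once this structural input is in place, the desired factorization follows formally.
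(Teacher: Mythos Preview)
Your proof is correct and follows essentially the same strategy as the paper: both reduce to showing that for each $\chi\in\irr_{2'}(G)$ one of $\wt{G}_\chi=\wt{G}$ or $\galh_\chi=\galh$ holds, invoking \prettyref{cor:Doddcharsrational} for type $\type{D}_n$ and for the non-extending case in type $\type{A}_{n-1}$, and using connectedness of $\cent_{\bG^\ast}(s)$ for type $\type{E}_6$ and the extending case in type $\type{A}_{n-1}$. The only difference is that for $\type{E}_6$ the paper simply cites that $\cent_{\bG^\ast}(s)$ is a Levi (via \prettyref{lem:firstextmapA} and \cite[Theorem~3.4]{malle19}), whereas you give a self-contained argument from the maximality of the $\type{D}_5$ Levi and the triviality of its relative Weyl group; this is a minor expository variation rather than a genuinely different route.
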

\begin{proof}
Let $\chi\in\irr_{2'}(G)\cap \mathcal{E}(G,s)$, with $\la\in\irr(T)$ corresponding to $s$, so that $\chi=\HC_T^G(\la)_\eta$ for some $\eta\in\irr_{2'}(W(\la))$ as before. Let $\alpha\in\wt{G}$ and $\sigma\in\galh$ such that $\alpha\sigma\in(\wt{G}\times\galh)_\chi$.
If $\chi$ is as in one of the situations of \prettyref{cor:Doddcharsrational}, then $\galh_{\chi^\alpha}=\galh$ for any $\alpha\in\wt{G}$, and the result follows. 

If $\bG$ is type $\type{E}_6$, then $\cent_{\bG^\ast}(s)$ is a Levi subgroup from before, and hence connected, so $\chi$ extends to $\wt{G}$ and the statement again follows, as $\chi=\chi^{\alpha\sigma}=\chi^\sigma$.  The same holds if $\bG$ is of type $\type{A}_{n-1}$ and $\chi$ extends to $\wt{G}$, completing the proof by Lemma \ref{lem:typeArest}.
\end{proof}

\begin{remark}\label{rem:stabs}
Let $\bG$ be as above.  Recall that for $\bG$ of type $\type{D}_n$ or when $\chi\in\irr_{2'}(G)$ with $G$ of type $\type{A}_n$ and $\chi$ does not extend to $\wt{G}$, we have that $\chi$ is $\galh$-invariant, by  \prettyref{cor:Doddcharsrational}.  In particular,  we have 
\[(\wt{G}D\times\galh)_\chi=(\wt{G}D)_\chi\times\galh_\chi\]
in those cases.

In the remaining cases being considered, $\chi$ extends to $\wt{G}$, so 
\[(\wt{G}D\times\galh)_\chi=\wt{G}(D\galh)_\chi.\]
\end{remark}

\section{Equivariant Bijections for the Prime $2$}\label{sec:equiv}

In previous work, the second-named author \cite{SF22} showed that the bijections used by Malle and Sp{\"a}th in the proof of Isaacs--Malle--Navarro's inductive McKay conditions for $\ell=2$ are also equivariant with respect to $\galh_2$, thus proving Condition \ref{cond:condition}, with a few exceptions.  The aim of this section is to complete the case of the main exceptions in loc. cit.:  groups with root systems of type $\type{A}$ and $\type{D}$.

\begin{theorem}\label{thm:equivbij}
Let $q$ be a power of an odd prime and let $G$ be a group of Lie type of simply connected type defined over $\F_q$ of one of the following types: $\type{A}_{n-1}(q), \tw{2}\type{A}_{n-1}(q)$ with $n\geq 3$ or $\type{D}_n(q), \tw{2}\type{D}_n(q)$ with $n\geq 4$.
Then \prettyref{cond:condition} from Section \ref{sec:iMN} holds for $G$ and the prime $\ell=2$.
\end{theorem}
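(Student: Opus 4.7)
The plan is to set $M := N = \norm_G(\bT)$, where $\bT$ is an $F$-stable maximally split torus of $\bG$, and follow the Harish-Chandra-theoretic bijection used by Malle--Sp{\"a}th \cite{MS16} and Malle \cite{malle19} in their proofs of the inductive McKay conditions for these groups. A Sylow $2$-subgroup $Q$ of $G$ can be chosen inside $N$ with $\norm_G(Q)\leqslant N$, and $N$ is proper in $G$ and $\aut(G)_Q$-stable, so $M=N$ meets the structural requirements of \prettyref{cond:condition}. By \prettyref{prop:princseries} every $\chi\in\irr_{2'}(G)$ is of the form $\HC_T^G(\la)_\eta$ with $\la\in\odd(T)$ and $\eta\in\irr_{2'}(W(\la))$, so the task reduces to producing an $\aut(G)_Q\times\galh$-equivariant bijection between these principal-series odd-degree characters and their Clifford-theoretic counterparts for $N$. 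Since the $\aut(G)_Q$-equivariance is essentially carried out in \cite{MS16, malle19}, the essential remaining tasks are verifying $\galh$-equivariance and checking simultaneous compatibility of the two actions.

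Concretely, I will define
\[
\Omega\!\left(\HC_T^G(\la)_\eta\right):=\Ind_{N_\la}^N\!\left(\Lambda_\la\cdot\widehat{\eta}\right),
\]
where $\Lambda$ is an $ND$-equivariant extension map for $\odd(T)$ with respect to $T\lhd N$, refined to be also $\galh$-equivariant in type $\type{A}$ via \prettyref{lem:firstextmapA}, and $\widehat{\eta}$ denotes the inflation of $\eta$ along $N_\la/T\cong W(\la)$. That $\Omega$ is a bijection onto $\irr_{2'}(N)$ follows from standard Harish-Chandra--Clifford theory, and the central character condition of \prettyref{cond:condition} is automatic since both $\chi$ and $\Omega(\chi)$ lie over $\la|_{\zent(G)}$. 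By Lemma~\ref{stab}, one may replace $\la$ by an orbit representative $\la_0$ whose stabilizer in $N(D\times\galh)$ factors as $N_{\la_0}(D\times\galh)_{\la_0}$, so that equivariance under each factor can be addressed separately.

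For $\galh$-equivariance, the strategy is to apply \prettyref{thm:GaloisAct} to $\chi=\HC_T^G(\la)_\eta$ and perform a parallel computation on $\Omega(\chi)$. Substituting the ingredients assembled in Section~\ref{sec:rationality} --- \prettyref{cor:etachar} gives $\eta^{(\sigma)}=\eta^\sigma=\eta$; \prettyref{prop:gammachar} gives $\gamma_{\la,\sigma}=1$; and \prettyref{prop:deltatypeD} (type $\type{D}$) together with \prettyref{lem:firstextmapA} and \prettyref{lem:typeAextRla} (type $\type{A}$) control $\delta_{\la,\sigma}$, showing it is trivial on $R(\la)$ with a rational value on $C(\la)$ --- the formula for $\chi^\sigma$ collapses to $\HC_T^G(\la^\sigma)_\eta$, and a parallel computation shows $\Omega(\chi)^\sigma=\Omega(\chi^\sigma)$. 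Many cases are in fact covered directly by \prettyref{cor:Doddcharsrational}, which forces both $\chi$ and $\Omega(\chi)$ to be rational-valued, so $\galh$-equivariance is immediate there.

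The main obstacle will be verifying the combined $\aut(G)_Q\times\galh$-equivariance when $\chi$ extends to $\wt G$ but is not rational, since then an element $\alpha\sigma\in\aut(G)_Q\times\galh$ may stabilize $\chi$ only through the joint action even though neither factor alone does. By \prettyref{rem:stabs}, the stabilizer factorizes as $\wt G(D\galh)_\chi$, reducing the problem to $(D\galh)_\chi$-equivariance of $\Omega$ on $\chi$. This in turn follows from the combined $(D\times\galh)$-equivariance of the extension map $\Lambda$, which is provided by \prettyref{lem:firstextmapA} in type $\type{A}$ and, in type $\type{D}$, follows from \prettyref{prop:deltatypeD} together with the standard $D$-equivariance established in \cite{MS16}. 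Assembling these compatibilities on each $(\la, \eta)$-datum and tracking the induced action on $\Omega$ completes the proof.
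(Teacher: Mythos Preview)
Your proposal has a genuine structural gap: you fix $M=N=\norm_G(\bT)$ with $\bT$ maximally split and assert that a Sylow $2$-subgroup $Q$ can be chosen with $\norm_G(Q)\leqslant N$. This is only correct when $q\equiv 1\pmod 4$. When $q\equiv 3\pmod 4$, the normalizer of a Sylow $2$-subgroup is \emph{not} contained in the normalizer of the maximally split torus; one must instead use $M=N_1:=\norm_{\bG}(\bS)^{vF}$ for a Sylow $2$-torus $\bS$ of $(\bG,vF)$, where $v\in V$ represents the longest Weyl element. This is precisely the content of \cite[Theorem~7.8]{malle07}, which requires the Sylow $d$-torus with $d$ the order of $q$ modulo $4$. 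So half of the theorem is simply not addressed by your argument.

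The paper's proof splits accordingly. For $q\equiv 1\pmod 4$ it proceeds essentially as you describe, using the Malle--Sp{\"a}th bijection \eqref{eq:bij} together with \prettyref{thm:GaloisAct}, \prettyref{cor:etachar}, \prettyref{prop:gammachar}, and \prettyref{lem:firstextmapA}/\prettyref{prop:deltatypeD} to conclude $\Omega(\chi^\sigma)=\Omega(\chi)^\sigma$. For $q\equiv 3\pmod 4$, however, one works in $\bG^{vF}\cong G$ with the groups $T_1=\bT^{vF}$ and $N_1$, and the parametrization of $\irr_{2'}(N_1)$ by pairs $(\la_1,\eta_1)$ from \cite{malle07}. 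One must then re-establish the analogues of the earlier ingredients in this twisted setting: that the extension map $\Lambda_1$ for $T_1\lhd N_1$ is $\galh$-equivariant on the relevant characters, that $\eta_1$ is $\gal$-fixed (via the structure of $W_1(\la_1)=R_1(\la_1)\rtimes C_1(\la_1)$ and \cite[Lemma~5.5]{SF22}), and in type $\type{D}$ that in fact every member of $\irr_{2'}(N_1)$ is rational. None of this is covered by your outline, and the required checks, while parallel in spirit, are not automatic consequences of the $d=1$ case.
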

\begin{proof} 

Throughout, let $\galh:=\galh_2$. First, suppose that $q\equiv 1\mod 4$.
By \cite[Theorem 6.2]{SF22}, we may assume that $\bG$ is type $\type{A}_{n-1}$. With the work in the previous sections, the proof follows exactly as in \cite[Prop. 6.1 and Thm. 6.2]{SF22}, but we provide it for completeness. 

For $\la\in\irr(T)$, recall that the Harish-Chandra series $\mathcal{E}(G, T,\la)$ is the set of irreducible constituents of $\HC_T^G(\la)$ and that $\Lambda$ is an $N{D}$-equivariant extension map with respect to $T\lhd N$, which is further $\galh$-equivariant  
on $\odd(T)$ by \prettyref{lem:firstextmapA}. Let $\wt{N}:=\N_{\wt{G}}(\bT)$. 
From \cite[Theorem 5.2]{MS16} (using the corresponding results for type $\type{A}$ in \cite{CS16}), the map 
\[\Omega\colon\bigcup_{\la\in\irr(T)} \mathcal{E}(G, T,\la)\rightarrow \irr(N) \] given by 
\begin{equation}\label{eq:bij}
\HC_T^G(\la)_\eta \mapsto \Ind_{N_{\la}}^N(\Lambda(\la)\eta)
\end{equation} defines an $\wt{N}{D}$-equivariant bijection, which restricts to a bijection (which we will also call $\Omega$) between $\irr_{2'}(G)$ and $\irr_{2'}(N)$, using \prettyref{prop:princseries}. Further, the group $M:=N$ and map $\Omega$ satisfy the stated properties (other than $\galh$-equivariance) by \cite[Theorem 7.8]{malle07} and \cite[Proposition 2.5]{CS13}.  

Now, we have 
\[\Omega(\chi^\sigma)=\Omega(\HC_T^G(\la^\sigma)_{\gamma_{\la,\sigma}(\delta'_{\la,\sigma})^{-1}\eta^{(\sigma)}})=\Ind_{N_\la}^N\left(\Lambda(\la^\sigma)\gamma_{\la,\sigma}(\delta'_{\la,\sigma})^{-1}\eta^{(\sigma)}\right),\]
by \prettyref{thm:GaloisAct} and the definition \eqref{eq:bij} of $\Omega$.
  But $\eta^{(\sigma)}=\eta^\sigma=\eta$,  $\gamma_{\la,\sigma}=1$, and $\delta_{\la,\sigma}=\delta_{\la,\sigma}'=1$, by \prettyref{cor:etachar}, \prettyref{prop:gammachar}, and 
  \prettyref{lem:firstextmapA}, respectively. Hence, we have 
  \[\Omega(\chi^\sigma)
  =\Ind_{N_\la}^N\left(\Lambda(\la^\sigma)\delta_{\la,\sigma}\eta^{\sigma}\right)
=\Ind_{N_{\la}}^N(\Lambda(\la)^\sigma\eta^\sigma)=
 \Omega(\chi)^\sigma,\]
  completing the proof when $q\equiv 1\mod 4$.

We are now left to consider the case that $q\equiv 3\pmod 4$. Recall that $\bG$ is type $\type{A}_{n-1}$  with $n\geq 3$ or $\type{D}_n$ with $n\geq 4$. We use the same notation and considerations made in \cite[Section 8]{SF22}, which we now summarize.  

We may write $\bg{T}=\cen_{\bg{G}}(\bg{S})$ for some Sylow $2$-torus $\bg{S}$ (as defined in \cite[Section 3.1]{malle07}) of $(\bg{G}, vF)$, where $v\in V$ is a representative for the longest element of $\bg{W}$ as in \cite[3.A]{MS16}. Writing $T_1:=\bg{T}^{vF}$ and $N_1:=\norm_{\bg{G}}(\bg{S})^{vF}=\bg{N}^{vF}$, \cite[Theorem 7.8]{malle07} yields that $N_1$ contains $\norm_G(Q)$ for a Sylow $2$-subgroup $Q$ of $G$ (which we now identify with the isomorphic group $\bG^{vF}$) and there is a bijection 
\[ \Omega_1\colon \irr_{2'}(G)\rightarrow\irr_{2'}(N_1),\]  where corresponding characters lie over the same character of $\zen(G)$.    Then $N_1$ is also $\Gamma=\aut(G)_Q$-stable by \cite[Proposition 2.5]{CS13}), and $\Omega_1$ is moreover $\Gamma$-equivariant by  \cite[Theorem 6.3]{MS16} and \cite[Theorem 6.1]{CS16}, combined with \cite[Theorem 2.12]{spath12}.
 Hence, as in \cite[Section 8]{SF22} it suffices to show that this bijection can be chosen to further be $\galh$-equivariant.

For $\la_1\in\irr(T_1)$, we  write $W_1(\la_1)$ for the group $(N_1)_{\la_1}/T_1$ and $R_1(\la_1)$ for the reflection group generated by the simple reflections $s_\alpha$ for $\alpha\in\Phi$ such that $s_\alpha\in W_1(\la_1)$ and $T_1\cap \langle X_{\alpha}, X_{-\alpha}\rangle$ is in the kernel of $\la_1$.  Here $X_\alpha$ denotes the root subgroup of $\bG$ associated to $\alpha$. Then by
the proof of \cite[Theorem 7.8]{malle07},  both $\irr_{2'}(\bG^{vF})$ and $\irr_{2'}(N_1)$ are parameterized by pairs $(\lambda_1, \eta_1)$ or $(s, \eta_1)$, where $s\in T_1^\ast$ is a semisimple element (up to $N_1^\ast:=(\bg{N}^\ast)^{vF}$-conjugation) centralizing a Sylow $2$-subgroup of ${\bG^\ast}^{(vF)}$; $\la_1\in\mathcal{E}(T_1, s)$ with $2\nmid [N_1:(N_1)_{\la_1}]$; and $\eta_1\in\irr_{2'}(W_1(\la_1))$.  Letting $W_1(s)$ and $W_1^\circ(s)$ denote the fixed points under $vF$ of the Weyl groups of  $\cen_{\bG^\ast}(s)$ and $\cen_{\bG^\ast}(s)^\circ$, respectively, we have 

\begin{equation}\label{eq:W_1R_1}
W_1(\la_1)\cong W_1(s); \quad W_1^\circ(s)\cong  R_1(\la_1); \quad\text{ and }\quad W_1(s)/W_1^\circ(s)\cong W_1(\la_1)/R_1(\la_1).
\end{equation}

The member of $\irr_{2'}(N_1)$ corresponding to $(\la_1, \eta_1)$ is of the form $\Ind_{(N_1)_{\la_1}}^{N_1}(\Lambda_1(\la_1)\eta_1)$, where $\Lambda_1$ is an extension map with respect to $T_1\lhd N_1$. (Note that $\Lambda_1$  exists by \cite[Proposition 5.9]{CS16}.)

First suppose  that $\bG$ is type $\type{D}_n$ with $n\geq 4$.  Then by \prettyref{cor:Doddcharsrational} and the discussion above, it suffices to show that every member of $\irr_{2'}(N_1)$ is rational. But in this case, recall that $s^2=1$, so $\la_1$ is fixed by $\gal$.  Further, the same arguments as in  \prettyref{prop:deltatypeD} yield that $\Lambda_1(\la_1)$ will be rational-valued, as $\Lambda_1(\la_1)$ is also rational-valued on the elements $n_\alpha(-1)$ for $s_\alpha\in R_1(\la_1)$ since $\la_1$ is trivial on $h_\alpha(-1)$ for such elements.  Further, using \eqref{eq:W_1R_1} and the proof of  \prettyref{lem:etachar}, we have $W(\la_1)$ is the semidirect product of the reflection group $R(\la_1)$ with an elementary abelian $2$-group.  As a Weyl group, every character of $R(\la_1)$ must be rational-valued (see e.g. \cite[Theorems 5.3.8, 5.4.5, 5.5.6, and Corollary
5.6.4]{GP00}), and therefore we may apply \cite[Lemma 5.5]{SF22} to see that any $\eta_1\in\irr_{2'}(W_1(\la_1))$ is necessarily fixed by $\gal$. This yields that $\psi:=\Ind_{(N_1)_{\la_1}}^{N_1}(\Lambda_1(\la_1)\eta_1)$ is fixed by $\gal$ for each $\psi\in\irr_{2'}(N_1)$, and we are done with this case.

Finally, let $\bG$ be of type $\type{A}_{n-1}$ with $n\geq 3$. Let $\chi=\HC_T^G(\la)_\eta\in\irr_{2'}(G)\cap\mathcal{E}(G,s)$ and let $\sigma\in\galh$.  As before, we have $\eta^{(\sigma)}=\eta$ and $\gamma_{\la,\sigma}=1=\delta_{\la,\sigma}$. 
This tells us that the action of $\sigma$ on $\chi$ can  be described by the action of $\sigma$ on  $\lambda$, or equivalently, on $s$ (as define in Section \ref{sec:extmaps}).  But the same argument as in \prettyref{lem:firstextmapA} 
shows that the extension map for $T_1\lhd N_1$ is $\galh$-equivariant when considering those $\la_1$ corresponding to $s$ with $\mathcal{E}(\bG^{vF}, s)\cap\irr_{2'}(\bG^{vF})\neq \emptyset$, and $\eta_1^\sigma=\eta_1$ for the same reasons as above.  Hence the action of $\sigma$ on $\Omega(\chi):=\Ind_{(N_1)_{\la_1}}^{N_1}(\Lambda_1(\la_1)\eta_1)$ is also described by its action on $s$, completing the proof. 
\end{proof}

Theorem \ref{thm:equivbij} completes \prettyref{cond:condition} from Section \ref{sec:iMN} for groups of Lie type for the prime $\ell=2$. 
In the next sections, we will deal with Condition \eqref{eq:extpart} from Section \ref{sec:iMN}. 
This was addressed for nontwisted groups whose graph automorphisms are trivial in \cite[Theorem A]{RSF22}.  

\subsection{An $\galh$-equivariant map for $\wt{G}$}\label{sec:wtGmap}

We end this section with the corresponding statement to Condition \ref{cond:condition} at the level of $\wt{G}$.  Let $G=\bG^F$ be such that $\bG$ is simple of simply connected type $\type{A}_{n-1}$ with $n\geq 3$, $\type{D}_n$ with $n\geq 4$, or $\type{E}_6$ and let $Q\in\syl_2(G)$. Write $\galh:=\galh_2$ and let $d$ be the order of $q$ modulo $4$.

From Theorem \ref{thm:equivbij} and \cite[Theorem A]{SF22}, we have a $\Gamma\times \galh$-equivariant bijection $\Omega\colon\irr_{2'}(G)\rightarrow\irr_{2'}(M)$, where $M=\norm_{\bG}(\bS)^{F}$ for a Sylow $d$-torus $\bS$ of $(\bG, F)$, which is the group $N$ in the case $q\equiv 1\mod 4$ and can be identified in $\bG^{vF}\cong G$ with the group $N_1$ in the case $q\equiv 3\mod 4$, where $N$ and $N_1$ are as in the proof of Theorem \ref{thm:equivbij} (or \cite[Sections 6, 8]{SF22}).
From \cite[Theorem 6.3]{MS16} and \cite[Theorem 6.1]{CS16}, we have  a $(\wt{G}D)_{\bS}$-equivariant bijection 
\begin{equation}\label{eqn:wtOmega}
\wt{\Omega}\colon \irr(\wt{G}|\irr_{2'}(G))\rightarrow \irr(\wt{M}|\irr_{2'}(M)),
\end{equation}
where $\wt{M}:=M \norm_{\wt{G}}(\bS)$, such that $\wt\chi$ and $\wt\Omega(\wt{\chi})$ always lie over the same character of $\zen(\wt{G})$; $\wt\Omega(\wt\chi)\beta=\wt\Omega(\wt{\chi}\beta)$ for each $\beta\in\irr(\wt{G}/G)\cong\irr(\wt{M}/M)$; and such that $\wt{\chi}$ lies over $\chi\in\irr_{2'}(G)$ if and only if $\wt\Omega(\wt{\chi})$ lies over $\Omega(\chi)\in\irr_{2'}(M)$ (see \cite[Remark 6.5]{CS16}). We next prove that $\wt\Omega$ is further $\galh$-equivariant.

\begin{proposition}\label{prop:wtOmegaequiv}
Let $G$ be one of the quasisimple groups of Lie type coming from a simply connected group, such that $G/\zent(G)$ as in Theorem \ref{thm:main}(\ref{mainlie}).  Then the map $\wt\Omega$ from \eqref{eqn:wtOmega} is $\galh_2$-equivariant.
\end{proposition}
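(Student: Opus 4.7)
The plan is to adapt the strategy of the proof of Theorem~\ref{thm:equivbij} to the $\wt{G}$-level. The key structural simplification is that, since $\wt{\bG}$ has connected center, every centralizer $\cent_{\wt{\bG}^\ast}(\wt{s})$ of a semisimple $\wt{s}\in\wt{\bG}^\ast$ is a connected Levi subgroup of $\wt{\bG}^\ast$. Writing $\wt{W}(\wt{\la}) = \wt{R}(\wt{\la}) \rtimes \wt{C}(\wt{\la})$ for the relative Weyl group of the $\wt{\la} \in \irr(\wt{T})$ associated to $\wt{s}$ (via the $\wt{G}$-analogue of \cite[Lemma 3.4]{SFT22} and \cite[Lemma 4.5]{SFgaloisHC}), this forces $\wt{C}(\wt{\la}) = 1$, so $\wt{W}(\wt{\la})$ is a reflection group. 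This is precisely the feature that will make the $\wt{G}$-level argument considerably simpler than its $G$-level counterpart from Section~\ref{sec:rationality}.

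For $q\equiv 1\pmod{4}$, I would use the $\wt{G}$-analogue of the explicit Harish--Chandra form of the bijection from \cite[Theorem 5.2]{MS16}, namely
\[\HC_{\wt{T}}^{\wt{G}}(\wt{\la})_\eta\ \mapsto\ \Ind_{\wt{N}_{\wt{\la}}}^{\wt{N}}\bigl(\wt{\Lambda}(\wt{\la})\eta\bigr),\]
for an $\wt{N}D$-equivariant extension map $\wt{\Lambda}$ for $\wt{T}\lhd\wt{N}$, where $\wt{N}:=\norm_{\wt{G}}(\wt{\bT})$. Applying Theorem~\ref{thm:GaloisAct} at the $\wt{G}$-level, the ``correction'' characters $\wt{\gamma}_{\wt{\la},\sigma}$ and $\wt{\delta}_{\wt{\la},\sigma}$ will both be trivial: the first by its very definition (there is no $\wt{C}$-part on which it can act nontrivially), and the second by the direct analogue of Lemma~\ref{lem:typeAextRla} (no extension obstruction beyond $\wt{R}(\wt{\la})$). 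Combining this with $\eta^{(\sigma)}=\eta^\sigma$, which holds by \cite[Corollary 5.6]{SF22} once $\wt{W}(\wt{\la})$ is known to be a reflection group, should yield both $\bigl(\HC_{\wt{T}}^{\wt{G}}(\wt{\la})_\eta\bigr)^\sigma = \HC_{\wt{T}}^{\wt{G}}(\wt{\la}^\sigma)_{\eta^\sigma}$ and $\wt{\Lambda}(\wt{\la})^\sigma = \wt{\Lambda}(\wt{\la}^\sigma)$, from which $\wt{\Omega}(\wt{\chi}^\sigma) = \wt{\Omega}(\wt{\chi})^\sigma$ follows directly from the formula defining $\wt{\Omega}$.

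For $q\equiv 3\pmod{4}$, the plan is to run the analogous argument, mirroring the second half of the proof of Theorem~\ref{thm:equivbij}: replace $\wt{T},\wt{N}$ by their $vF$-counterparts $\wt{T}_1,\wt{N}_1$, use the Sylow $2$-torus description of $\wt{\Omega}$ at the $\wt{G}$-level, and exploit once more that $\wt{W}_1(\wt{\la}_1)=\wt{R}_1(\wt{\la}_1)$ is a reflection group by the connectedness of $\cent_{\wt{\bG}^\ast}(\wt{s})$.

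The hardest part will be the bookkeeping of (i) verifying that the bijection $\wt{\Omega}$ from \cite[Theorem 6.3]{MS16} and \cite[Theorem 6.1]{CS16} does indeed agree with this explicit Harish--Chandra/Sylow-torus description, and (ii) carrying out the $\wt{G}$-level analogues of the Section~\ref{sec:rationality} lemmas. Both steps should nevertheless be substantially easier than their $G$-level counterparts, precisely because the connectedness of $\cent_{\wt{\bG}^\ast}(\wt{s})$ eliminates the $\wt{C}(\wt{\la})$-component that drove almost the entire case analysis in Section~\ref{sec:rationality}.
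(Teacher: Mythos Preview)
Your approach is correct and follows essentially the same strategy as the paper: exploit the connectedness of $\cent_{\wt{\bG}^\ast}(\wt{s})$ to force $\wt{C}(\wt{\la})=1$, so that the correction characters in Theorem~\ref{thm:GaloisAct} vanish and the $\sigma$-action on both sides is governed entirely by $\wt{\la}^\sigma$ (equivalently $\wt{s}^\sigma$).

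There are two differences worth noting. First, the paper does not reprove the $\galh$-equivariance of an extension map $\wt{\Lambda}$ from scratch at the $\wt{G}$-level. Instead it uses the \emph{specific} $\wt{\Lambda}$ employed in the construction of $\wt{\Omega}$, namely the one from \cite[Lemma~5.8(a)]{CS16}, which is determined by the requirement $\Res^{\wt{M}_{\wt{\la}}}_{M_{\wt{\la}}}\wt{\Lambda}(\wt{\la})=\Res^{M_{\la}}_{M_{\wt{\la}}}\Lambda(\la)$. Uniqueness of this extension then transports the already-established $\galh$-equivariance of $\Lambda$ on $\odd(C)$ (Lemma~\ref{lem:firstextmapA}, Proposition~\ref{prop:deltatypeD}) directly to $\wt{\Lambda}$. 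This sidesteps your ``hardest part (i)'': you must check equivariance for the particular $\wt{\Lambda}$ that actually defines $\wt{\Omega}$, not merely exhibit some $\galh$-equivariant extension map, and the paper's route does this for free. Second, the paper observes that in fact $\wt{\eta}^\sigma=\wt{\eta}$ (not just $\wt{\eta}^{(\sigma)}=\wt{\eta}^\sigma$), since $\wt{W}(\wt{\la})$ is a Weyl group with no component of type $\type{G}_2$, $\type{E}_7$, or $\type{E}_8$; and it treats both congruences of $q$ modulo $4$ uniformly via $C=\cent_G(\bS)$, rather than splitting into cases. Your route would work, but the paper's is shorter precisely because it recycles the $G$-level extension-map analysis rather than redoing it.
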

\begin{proof}
Let $\chi=\HC_T^G(\la)_\eta\in\irr_{2'}(G)\cap\mathcal{E}(G,s)$.  Let $\wt{T}:=\wt\bT^F$ for $\wt\bT:=\bT\zent(\wt{\bG})$ and $\wt{N}:=\norm_{\wt G}(\bT)$. Then we may find $\wt{\la}\in\irr(\wt{T}|\la)$ and $\wt{\eta}\in \irr(\wt{N}_{\wt{\la}}/\wt{T})$ such that 
$\wt{\chi}:=\HC_{\wt{T}}^{\wt{G}}(\wt{\la})_{\wt\eta}\in\mathcal{E}(\wt{G}, \wt{s})$ lies above $\chi$.  The other members of $\irr(\wt{G}|\chi)$ are then of the form $\wt{\chi}\beta$ for $\beta\in\irr(\wt{G}/G)$, since $\wt{G}/G$ is abelian, by Clifford theory and that restriction from $\wt{G}$ to $G$ is multiplicity-free (see \cite[Theorem 1.7.15]{GM20}). (Recall then that $\beta=\hat z$ for some $z\in\zent(\wt{G}^\ast)$, and $\wt{\chi}\beta=\HC_{\wt{T}}^{\wt{G}}(\wt{\la}\hat z)_{\wt\eta}\in\mathcal{E}(\wt{G}, \wt{s}z)$ and $\wt\Omega(\wt{\chi}\beta)=\wt{\Omega}(\wt{\chi})\beta$.)

Let $\sigma\in\galh:=\galh_2$. Since $\wt{N}_{\wt{\la}}/\wt{T}$ is a Weyl group containing no component of type $\type{G}_2$, $\type{E}_7$, or $\type{E}_8$, we have $\wt{\eta}^\sigma=\wt\eta=\wt\eta^{(\sigma)}$ (see e.g. \cite[Proposition 5.4]{SF22}). Since the other characters appearing in the description of the action of $\sigma$ in \prettyref{thm:GaloisAct} are necessarily trivial since $\cent_{\wt{\bG}}(\wt{s})$ is connected, we see that $\wt{\chi}^\sigma=\HC_{\wt{T}}^{\wt{G}}(\wt{\la}^\sigma)_{\wt\eta}$, so that the action of $\sigma$ on $\wt\chi$ can be described by its action on $\wt{\la}$, or $\wt{s}$. (Alternatively, one sees this using \cite{SrinivasanVinroot} and the fact that unipotent characters of odd degree are rational, see \cite[Proposition 4.4]{SFgaloisHC}.) We aim to show that the same is true on the local side.

Recall from the proof of Theorem \ref{thm:equivbij} that the $MD$-equivariant extension map $\Lambda$ or $\Lambda_1$ with respect to $T\lhd N$ or $T_1\lhd N_1$ is $\galh$-equivariant on the characters $\odd(T)$ or $\odd(T_1)$ under consideration, in the cases that $\bG$ is of type $\type{A}$ or $\type{D}$. For $\bG$ of type $\type{E}_6$, the same is true by \cite[Lemma 4.1 and Corollary 4.4]{SF22}.
To ease notation, let $C:=\cent_{G}(\bg{S})$, which we identify with $T$, respectively $T_1$, and let $\Lambda$ denote the corresponding extension map with respect to $C\lhd M$ on $\odd(C)$. 
Let $\wt{C}:=\cent_{\wt{G}}(\bg{S})\lhd \wt{M}$. 
Then using \cite[Lemma 5.8(a)]{CS16}, we can define an extension map $\wt\Lambda$ with respect to $\wt{C}\lhd\wt{M}$ such that for $\la\in\irr(C)$ and  $\wt{\la}\in\irr(\wt{C}|\la)$, $\wt{\Lambda}(\wt{\la})$ is the unique extension of $\wt\la$ to $\wt{M}_{\wt{\la}}$ such that $\Res^{\wt{M}_{\wt{\la}}}_{M_{\wt{\la}}}(\wt\Lambda(\wt\la))=\Res^{{M}_{{\la}}}_{M_{\wt{\la}}}(\Lambda(\la))$. This is the extension map used to construct the  map $\wt\Omega$ from \eqref{eqn:wtOmega} (see \cite[Proposition 6.3]{CS16}).  Further, from this uniqueness and the $\galh$-equivariance of $\Lambda$, we see that $\wt\Lambda$ is also $\galh$-equivariant on characters of $\wt{C}$ above $\odd(C)$.

 Let $\wt\psi:=\wt\Omega(\wt\chi)\in\irr(\wt M|\Omega(\chi))$ and recall that $\psi:=\Omega(\chi)$ is of the form $\Ind_{{M}_{{\la_1}}}^{{M}}(\Lambda(\la_1)\eta_1)$ with $\la_1\in\irr(C)$ corresponding to $s$ and $\eta_1\in\irr(M_{\la_1}/C)$.  Following the construction in \cite[Section 6]{CS16} (which is then used in \cite[Theorem 6.3]{MS16}), we may write   
 $\wt\psi:=\Ind_{\wt{M}_{\wt{\la}_1}}^{\wt{M}}(\wt\Lambda(\wt\la_1)\wt\eta_1)$ for $\wt\la_1\in\irr(\wt{C}|\la_1)$ corresponding to $\wt{s}$ and $\wt\eta_1\in \irr(\wt M_{\wt\la_1}/\wt C)$. For the same reason as before, we have $\wt\eta_1^\sigma=\wt\eta_1$, so since $\wt\Lambda(\wt\la_1)^\sigma=\wt\Lambda(\wt\la_1^
 \sigma)$, we have  $\wt\psi^\sigma=\Ind_{\wt{M}_{\wt{\la}_1^s}}^{\wt{M}}(\wt\Lambda(\wt\la_1^\sigma)\wt\eta_1)$, so that the action of $\sigma$ on $\wt\psi$ can again be described by the action on $\wt{s}$, completing the proof.
\end{proof}

\section{Extensions and the Proof of Theorem \ref{thm:main}}\label{sec:extensions}

Keep the notation from Section \ref{sec:wtGmap}. We now wish to complete the proof of Theorem \ref{thm:main}. Thanks to Section \ref{sec:sporalt}, we may assume that $G=\bG^F$ is a group of Lie type such that $\bG$ is simple of simply connected type, $G$ is the  universal covering group of $S=G/\zent(G)$, and that $S$ is one of the simple groups listed in  Theorem \ref{thm:main}(\ref{mainlie}). We keep the notation from the previous section.

Given the $\Gamma\times \galh$-equivariant bijection $\Omega\colon\irr_{2'}(G)\rightarrow\irr_{2'}(M)$ for $G$ from Theorem \ref{thm:equivbij} and \cite[Theorem A]{SF22},
it  suffices to show that 
\[(G\rtimes \Gamma_{\chi^{\galh}}, G, \chi)_{\galh} \geqslant_c (M\rtimes \Gamma_{\chi^{\galh}}, M, \Omega(\chi))_{\galh}\] for all $\chi\in\irr_{2'}(G)$ (see Definition \ref{def:triples}).   Let $\wt{M}:=M \norm_{\wt{G}}(\bS)$.  Since $\wt{G} D$ induces all automorphisms of $G$, and $M$ can be chosen to be $D$-stable, \cite[Theorem 2.9]{NavarroSpathVallejo}  implies that it suffices to show
\[((\wt{G} D)_{ \chi^{\galh}}, G, \chi)_{\galh} \geqslant_c ((\wt{M}D)_{ \chi^{\galh}}, M, \Omega(\chi))_{\galh}\] for all $\chi\in\irr_{2'}(G)$.  
Further, \cite[Lemma 2.2]{NavarroSpathVallejo} yields that it suffices to show \[((\wt{G} D)_{ {\chi_0}^{\galh}}, G, \chi_0)_{\galh} \geqslant_c ((\wt{M}D)_{ \chi_0^{\galh}}, M, \Omega(\chi_0))_{\galh}\] for some $\wt{G}$-conjugate $\chi_0$ of each $\chi\in\irr_{2'}(G)$.  
In particular, we will work with such a $\chi_0$ satisfying that $(\wt{G}D)_{\chi_0}=\wt{G}_{\chi_0}D_{\chi_0}$ and that $\chi_0$ extends to $GD_{\chi_0}$, which exists by \cite[Theorem 4.1]{CS16} and \cite[Theorem 6.4 and Proof of Theorem 1]{MS16}. 
Using \cite[Theorems 3.1 and 3.18]{MS16} and by considering the proof of \cite[Corollary 5.3]{MS16} (see also \cite[Section 5]{CS16} for the case when $\bG$ is of type $\type{A}$), 
we also have $(\wt{M}D)_{\Omega(\chi_0)}=\wt{M}_{\Omega(\chi_0)}D_{\Omega(\chi_0)}$ and $\Omega(\chi_0)$ extends to $MD_{\Omega(\chi_0)}$. 

Note that parts (i) and (ii) of Definition \ref{def:triples} hold for these groups by construction and by the equivariance of $\Omega$. Further, part (iii)  holds by \cite[Lemmas 2.11 and 2.13]{spath12} and their proofs, once we prove (iv), using the constructions in loc. cit. and using \cite[Lemma 4.2]{RSF22}.  More specifically, choosing extensions $\tilde{\chi}$ and $\hat{\chi}$ of $\chi$ to $\tilde{G}_{\chi}$ and to $GD_\chi$, respectively, determines a projective representation $\mathcal{P}$   by \cite[Lemma 2.11]{spath12}, and similarly, extensions $\tilde{\psi}$ and $\hat{\psi}$ of $\psi$ give a projective representation $\mathcal{P}'$. If $\wt\chi$ and $\wt\psi$ have the same values on $\zent(\tilde{G})$, then the second part of part (iii) of Definition \ref{def:triples} is satisfied by the proof of \cite[Lemma 2.13]{spath12}. Further, if the extensions are constructed in a way to satisfy (iv), then \cite[Lemma 2.11]{spath12} yields that the first part of (iii) is also satisfied. Hence, it suffices to construct these extensions in a way that (iv) is satisfied.

So we see that it now suffices to prove part (iv) of Definition \ref{def:triples} for $\chi_0$. Note that by \cite[Lemma 1.8]{NavarroSpathVallejo}, it then suffices to prove part (iv) for a complete set of coset representatives for $(\wt{M}D)_{\chi_0}$ in $(\wt{M}D\times \galh)_{\chi_0}$. In particular, from the considerations in \prettyref{rem:stabs}, we see that it suffices to show (iv) for elements of $(D\galh)_{\chi_0}$.

From now on, we will write $\psi_0:=\Omega(\chi_0)$. 
Let $D_1:=O_{2'}(D_{\chi_0})$ and $D_2\in\syl_2(D_{\chi_0})$. Note  that  if $G\neq\type{D}_4(q)$, we have $D_{\chi_0}=D_1\times D_2$ is abelian.

\begin{proposition}\label{prop:ext2pt}
Let $G$ be as above with $G/\zent(G)$ as in Theorem \ref{thm:main}(\ref{mainlie}), let $\chi_0\in\irr_{2'}(G)$ be chosen as above and write $\psi_0:=\Omega(\chi_0)$. Then there exist $(\mathrm{N}_{D}(D_2) \times \galh)_{\chi_0}$-invariant extensions $\chi_2$ and $\psi_2$ of $\chi_0$ and $\psi_0$ to $GD_2$ and $MD_2$, respectively.
\end{proposition}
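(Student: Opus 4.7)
The plan is to start from the extension of $\chi_0$ to $GD_{\chi_0}$ guaranteed by our standing choice, restrict it to $GD_2$ to obtain a reference extension, and then modify by inflating a suitable $\nu\in\irr(D_2)$ through $GD_2/G\cong D_2$. By Gallagher's theorem the set of extensions of $\chi_0$ to $GD_2$ is an $\irr(D_2)$-torsor, and the action of $E:=(\N_D(D_2)\times\galh)_{\chi_0}$ on this torsor is encoded in a $1$-cocycle $\mu\colon E\to\irr(D_2)$ determined by the reference extension; an $E$-invariant extension exists exactly when $\mu$ is a coboundary, and the same reduction applies on the $M$-side.

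For the $\N_D(D_2)_{\chi_0}$-direction, I would invoke the constructions used in the proofs of the ordinary inductive McKay conditions --- specifically \cite[Thm.\ 6.4]{MS16} together with \cite[Section 5]{CS16} --- which in our setup produce an $\N_D(D_2)_{\chi_0}$-invariant extension $\chi_2^\flat$ of $\chi_0$ to $GD_2$ and a matching $\psi_2^\flat$ of $\psi_0$ to $MD_2$ corresponding under $\wt{\Omega}$. Taking $\chi_2^\flat$ as the reference, the cocycle $\mu$ is supported entirely on $\galh_{\chi_0}$. To control this remaining piece I would use Section~\ref{sec:rationality}: when $\chi_0$ is rational-valued --- i.e.\ $\bG$ of type $\type{D}_n$, or $\bG$ of type $\type{A}_{n-1}$ with $\chi_0$ not extending to $\wt{G}$ (Corollary~\ref{cor:Doddcharsrational}) --- the $\galh$-action on $\chi_2^\flat$ is determined by its values at $D_2$, which lie in $\Q(\zeta_{|D_2|})$, and the cocycle can be trivialized by a direct cohomological argument. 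In the remaining type $\type{A}$ and $\type{E}_6$ cases, $\chi_0$ extends to $\wt{G}$ and Remark~\ref{rem:stabs} decomposes $(\wt{G}D\times\galh)_{\chi_0}=\wt{G}(D\galh)_{\chi_0}$; I would anchor $\chi_2^\flat$ via a $(D\galh)_{\chi_0}$-equivariant $\wt\chi\in\irr(\wt{G})$ over $\chi_0$, whose behaviour under $\galh$ is controlled by the analysis of $\wt\chi_0=\HC^{\wt{G}}_{\wt{T}}(\wt\la)_{\wt\eta}$ already used in the proof of Proposition~\ref{prop:wtOmegaequiv}.

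The main obstacle will be ensuring that a modification of $\chi_2^\flat$ by some $\nu\in\irr(D_2)$ chosen to kill $\mu|_{\galh_{\chi_0}}$ does not destroy the $\N_D(D_2)_{\chi_0}$-invariance. To handle this, I would factor the extension through the canonical Isaacs extension of $\chi_0$ to $GD_1$, which is unique and hence automatically $(\N_D(D_2)\times\galh)_{\chi_0}$-invariant since $|D_1|$ is odd, and then extend through the $2$-group $D_2$ using the explicit form of the extensions in \cite{MS16,CS16}; this confines the admissible $\nu$ to the $\N_D(D_2)_{\chi_0}$-fixed part of $\irr(D_2)$, which I expect to be large enough to absorb the $\galh$-cocycle. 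The $M$-side proceeds in parallel using the local analogue (essentially characters of $M/C$ for the Cartan subgroup $C$ from Section~\ref{sec:wtGmap}) and the $\wt\Omega$-equivariance of Proposition~\ref{prop:wtOmegaequiv}, ensuring that $\chi_2$ and $\psi_2$ are chosen consistently. The case $G=\type{D}_4(q)$, where $D_{\chi_0}$ need not be abelian because of triality, would be handled by a separate case analysis on whether the order-$3$ graph automorphism lies in $D_{\chi_0}$.
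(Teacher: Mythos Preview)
Your approach is far more involved than necessary and contains genuine gaps: phrases such as ``I expect to be large enough'' and ``a direct cohomological argument'' are not proofs, and the detour through $GD_1$ does not obviously help with $GD_2$. More importantly, you have overlooked the decisive elementary observation: $\chi_0$ and $\psi_0$ have \emph{odd} degree while $D_2$ is a $2$-group, so $\gcd(\chi_0(1),[GD_2:G])=1$ and likewise for $\psi_0$. This coprimality is exactly the hypothesis of the determinant-based canonical extension \cite[Lemma~6.24]{isaacs}: for any linear character $\mu$ of $GD_2$ extending $\det\chi_0$, there is a \emph{unique} extension $\chi_2$ of $\chi_0$ to $GD_2$ with $\det\chi_2=\mu$.

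The paper simply takes $\mu$ to be the extension of $\det\chi_0$ (resp.\ $\det\psi_0$) that is trivial on $D_2$. Any $a\in(\N_D(D_2)\times\galh)_{\chi_0}$ fixes $\chi_0$ (hence $\det\chi_0$) and normalises $D_2$, so $\mu^a=\mu$; then $\chi_2^a$ is another extension of $\chi_0$ with determinant $\mu$, forcing $\chi_2^a=\chi_2$ by uniqueness. The same argument gives $\psi_2$. No cocycle analysis, no case split by type, and no special treatment of $\type{D}_4(q)$ are needed. Ironically, you invoke the ``canonical Isaacs extension'' for the odd-order group $D_1$ as an auxiliary tool, but miss that the \emph{same} mechanism --- with the roles of the odd part and the $2$-part interchanged --- applies directly to $D_2$ and finishes the proof in one stroke.
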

\begin{proof}

Let $\mu$ be the unique linear character extending  $\det\psi_0$ to $MD_2$ such that $\mu$ is trivial on $D_2$, which exists since the product is semidirect and $\det\psi_0$ is linear. Then $\mu^a=\mu$ for $a \in (\mathrm{N}_{D}(D_2) \times \galh)_{\chi_0}$, and by \cite[Lemma 6.24]{isaacs}, we have a unique extension $\psi_2$ of $\psi_0$ to $MD_2$ such that $\det\psi_2=\mu$.  Then $\psi_2^a$ is another extension, with $\det\psi_2^a=\mu^a=\mu$, so $\psi_2^a=\psi_2$. Arguing similarly, we obtain an extension $\chi_2$ of $\chi_0$ to $GD_2$ with $\chi_2^a=\chi_2$.
\end{proof}

The next proposition, found in  \cite[Proposition 8.7]{birtethesis}, is a generalized version of Proposition 2.6 and Corollary 2.7 from \cite{RSF22} (minor corrections at arXiv:2106.14745v2). 

\begin{proposition}\label{prop:2.62.7RSF}
Suppose that $\bG$ is a connected reductive group and $F$ is a Frobenius endomorphism such that $F=F_0^k\rho$ for some Frobenius endomorphism $F_0\colon\bG\rightarrow\bG$, graph automorphism $\rho\colon \bG\rightarrow\bG$ commuting with $F_0$, and positive integer $k$. 
Let $\bL\leq \bg{P}$ be $\langle F_0, F\rangle$-stable Levi and parabolic subgroups of $\bG$ with $(\norm_{G}(\bL)/L)^{F_0}=\norm_G(\bL)/L$, where $L:=\bL^F$. 
 Suppose that $\la \in \Irr(L)$ is an $F_0$-invariant cuspidal character that extends to  $\irr(\norm_{G}(\bL)_{\la} \langle F_0 \rangle)$, and let $\chi \in \Irr(G)$ be a character in the Harish--Chandra series of $\la$. Then:
\begin{enumerate}
\item For $\hat \chi \in \Irr( G\langle F_0\rangle \mid \chi)$, there exists a unique $\hat \la \in \Irr( L\langle F_0\rangle \mid \la)$ such that $\langle \HC_{ L\langle F_0\rangle}^{ G\langle F_0\rangle} \hat \la, \hat \chi \rangle \neq 0$. (Here we define $\HC_{L\langle F_0\rangle}^{G\langle F_0\rangle}:=\Ind_{\bg{P}^F\langle F_0\rangle}^{G\langle F_0\rangle} \circ \mathrm{Infl}_{L\langle F_0\rangle}^{\bg{P}^F\langle F_0\rangle}$.)
 
\item {If $\chi$ is $F_0$-invariant, this yields a bijection $\irr({G}\langle F_0\rangle\mid\chi)\rightarrow  \irr({L}\langle F_0\rangle\mid \la)$.}
\item Assume that $\chi$ is $F_0$-invariant and that  $\alpha\sigma \in \aut(G)\times \gal$ stabilizes $\chi$. Then we have $\hat \chi^{\alpha\sigma}=\hat \chi \beta$ for some $\beta \in \Irr(G \langle F_0 \rangle /G)$.  {In particular,  $\beta$ is such that $\la^{\alpha\sigma x}=\la$ and $\wh{\la}^{\alpha\sigma x}=\wh{\la}\beta$ for some $x \in \norm_{G}(\bL)$.}
\end{enumerate}
\end{proposition}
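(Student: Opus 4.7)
The plan is to adapt standard Harish--Chandra theory to the extended group $G\langle F_0\rangle$, exploiting crucially the hypothesis that $\lambda$ extends to $\norm_G(\bL)_\lambda\langle F_0\rangle$. The central tool will be a Mackey-type compatibility between Harish--Chandra induction at the level of $G$ and at the level of $G\langle F_0\rangle$.

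First, I would establish the key compatibility identities. For any extension $\hat\lambda \in \irr(L\langle F_0\rangle|\lambda)$, the decomposition $G\langle F_0\rangle = G\cdot \bg{P}^F\langle F_0\rangle$ together with $G\cap\bg{P}^F\langle F_0\rangle = \bg{P}^F$ yields
\[
\Res^{G\langle F_0\rangle}_G \HC_{L\langle F_0\rangle}^{G\langle F_0\rangle}(\hat\lambda) = \HC_L^G(\lambda),
\]
and, via Gallagher together with the natural identification of $L\langle F_0\rangle/L$ with a subquotient of $G\langle F_0\rangle/G$ coming from the hypothesis $(\norm_G(\bL)/L)^{F_0} = \norm_G(\bL)/L$, the tensor compatibility
\[
\HC_{L\langle F_0\rangle}^{G\langle F_0\rangle}(\hat\lambda\cdot\nu) = \HC_{L\langle F_0\rangle}^{G\langle F_0\rangle}(\hat\lambda)\cdot\tilde\nu
\]
holds whenever $\nu\in\irr(L\langle F_0\rangle/L)$ has extension $\tilde\nu\in\irr(G\langle F_0\rangle/G)$.

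For part (1), existence of $\hat\lambda$ follows from the restriction formula together with Frobenius reciprocity. For uniqueness, suppose $\hat\chi$ appears in both $\HC_{L\langle F_0\rangle}^{G\langle F_0\rangle}(\hat\lambda_1)$ and $\HC_{L\langle F_0\rangle}^{G\langle F_0\rangle}(\hat\lambda_2)$ with $\hat\lambda_2 = \hat\lambda_1\nu$. By the tensor identity, $\hat\chi$ and $\hat\chi\tilde\nu^{-1}$ both appear as constituents of $\HC_{L\langle F_0\rangle}^{G\langle F_0\rangle}(\hat\lambda_1)$. The extension of $\lambda$ to $\norm_G(\bL)_\lambda\langle F_0\rangle$ is then used to identify $\End_{G\langle F_0\rangle}(\HC_{L\langle F_0\rangle}^{G\langle F_0\rangle}(\hat\lambda_1))$ with a twisted group algebra on an extension of the relative Weyl group $W(\lambda)$, yielding a Howlett--Lehrer type parameterization of the constituents that forces $\tilde\nu=1$. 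Part (2) then follows by counting: Gallagher's theorem gives $|\irr(G\langle F_0\rangle|\chi)| = |G\langle F_0\rangle/G_\chi|$ (equal to $|G\langle F_0\rangle/G|$ when $\chi$ is $F_0$-invariant) and similarly for $\lambda$, and the two cardinalities agree by the stabilizer hypothesis, making the map from (1) a bijection.

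For part (3), since $\alpha\sigma$ stabilizes $\chi$, the character $\hat\chi^{\alpha\sigma}$ is another element of $\irr(G\langle F_0\rangle|\chi)$, so Gallagher gives $\hat\chi^{\alpha\sigma} = \hat\chi\beta$ for some $\beta\in\irr(G\langle F_0\rangle/G)$. Because $\chi$ lies in the $G$-orbit of the Harish-Chandra series of $\lambda$, the image $\bL^{\alpha\sigma}$ is $G$-conjugate to $\bL$, so one can choose $x\in\norm_G(\bL)$ with $(\alpha\sigma)x$ stabilizing $\bL$, hence stabilizing $\lambda$ up to the Weyl group action. Applying $\alpha\sigma x$ to the relation from (1) and invoking the uniqueness in (1) with input $\hat\chi\beta$ then forces $\hat\lambda^{\alpha\sigma x} = \hat\lambda\cdot\beta|_{L\langle F_0\rangle}$, and in particular $\lambda^{\alpha\sigma x}=\lambda$. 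The main obstacle is the uniqueness in part (1): this requires a careful endomorphism-algebra analysis of $\HC_{L\langle F_0\rangle}^{G\langle F_0\rangle}(\hat\lambda)$, and is precisely where the stronger extension hypothesis of $\lambda$ to $\norm_G(\bL)_\lambda\langle F_0\rangle$, rather than merely to $L\langle F_0\rangle$, is decisive.
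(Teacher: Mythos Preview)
The paper does not give its own proof of this proposition; it is quoted as \cite[Proposition 8.7]{birtethesis} and described as a generalization of \cite[Proposition 2.6 and Corollary 2.7]{RSF22}. There is therefore no in-paper argument to compare your proposal against.

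That said, your outline matches the strategy of \cite[Proposition 2.6 and Corollary 2.7]{RSF22}, which the paper identifies as the source of the special case: the restriction identity $\Res^{G\langle F_0\rangle}_G \HC_{L\langle F_0\rangle}^{G\langle F_0\rangle}(\hat\lambda)=\HC_L^G(\lambda)$, the tensor compatibility with linear characters of the cyclic quotient, and a Howlett--Lehrer endomorphism-algebra computation for uniqueness are exactly the ingredients used there. Your identification of the uniqueness step in (1) as the place where the stronger hypothesis that $\lambda$ extends to $\norm_G(\bL)_\lambda\langle F_0\rangle$ (rather than merely to $L\langle F_0\rangle$) is essential is correct and matches that reference.

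One point in (3) deserves more care. For $\hat\chi^{\alpha\sigma}$ to be defined and for the conclusion $\hat\lambda^{\alpha\sigma x}=\hat\lambda\beta$ with $x\in\norm_G(\bL)$ to make sense, one needs $\alpha$ to extend to $G\langle F_0\rangle$ and to normalize $\bL$. This is implicit in the statement and holds in every application in the paper (where $\alpha$ comes from $D$ and $\bL=\bT$), but your sentence ``$\bL^{\alpha\sigma}$ is $G$-conjugate to $\bL$, so one can choose $x\in\norm_G(\bL)$'' conflates two steps: $G$-conjugacy of $\bL^\alpha$ to $\bL$ only gives $x\in G$, not $x\in\norm_G(\bL)$, unless $\alpha$ already normalizes $\bL$. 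You should state this as a standing assumption rather than deduce it.
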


For extensions to $GD_1$ in the case of $\type{D}_4$, the following will be useful,  from \cite[Corollary 2.2]{navarrotiep2008Rational}.

\begin{lemma}\label{lem:rationalabove}
Let $X\lhd Y$ be finite groups with $|Y/X|$ odd, and let $\theta\in\irr(X)$ be rational-valued.  Then there exists a unique rational-valued character $\hat\theta\in\irr(Y|\theta)$.
\end{lemma}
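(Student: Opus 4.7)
The plan is to use a Clifford-theoretic reduction followed by an inductive argument, matching the proof of \cite[Corollary 2.2]{navarrotiep2008Rational}.

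First, I would reduce to the case where $\theta$ is $Y$-invariant. Since $\theta$ is rational-valued, $\gal$ stabilizes $\irr(Y|\theta)$. Let $T := \I_Y(\theta)$; then $|T/X|$ divides $|Y/X|$ and is hence odd. The Clifford correspondence $\Ind_T^Y \colon \irr(T|\theta) \to \irr(Y|\theta)$ is a bijection, and it is $\gal$-equivariant since induction commutes with the Galois action on characters. Rational characters thus correspond to rational characters under it, so I may replace $Y$ by $T$ and assume that $\theta$ is $Y$-invariant.

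Next, assuming $\theta$ is $Y$-invariant and $|Y/X|$ is odd, I would proceed by induction on $|Y/X|$, using that $Y/X$ is solvable by Feit--Thompson to pass via chief factors to the case where $Y/X$ is cyclic of odd prime order $p$. In that case, $\theta$ extends to some $\hat\theta \in \irr(Y)$ by \cite[Corollary 11.22]{isaacs}, and Gallagher's theorem gives $\irr(Y|\theta) = \{\hat\theta\beta : \beta \in \irr(Y/X)\}$, which has $p$ elements. For each $\sigma \in \gal$, the twist $\hat\theta^\sigma$ is another extension of $\theta$, so $\hat\theta^\sigma = \hat\theta\mu_\sigma$ for some $\mu_\sigma \in \irr(Y/X)$, and the rationality condition on $\hat\theta\beta$ becomes the system $\mu_\sigma\beta^\sigma = \beta$ inside the cyclic group $\irr(C_p)$. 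Because $p$ is odd, this system admits a unique solution $\beta$, yielding the desired unique rational character in $\irr(Y|\theta)$.

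The main obstacle is the invariant, cyclic-prime case: one must check that the system $\{\mu_\sigma\beta^\sigma = \beta\}_\sigma$ is a consistent cocycle equation and then solve it. This step crucially uses that $p$ (and hence doubling) is invertible in $\irr(C_p)$, reflecting the odd-order hypothesis. The induction back up to general odd $|Y/X|$ is then straightforward, using that chief factors of odd-order solvable groups are elementary abelian.
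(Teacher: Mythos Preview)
The paper gives no proof of this lemma; it is simply quoted from \cite[Corollary~2.2]{navarrotiep2008Rational}, which is also what you are following. Your outline is close, but the step you call ``straightforward''---the induction back up from the prime-cyclic case---has a genuine gap when carried out directly for \emph{rational} characters. With $X\lhd N\lhd Y$ and $|Y/N|=p$, let $\psi$ be the unique rational member of $\irr(N|\theta)$ furnished by induction. For uniqueness in $\irr(Y|\theta)$ you must show that every rational $\chi'\in\irr(Y|\theta)$ lies over $\psi$. If $\Res^Y_N\chi'$ is reducible, its $p$ constituents form a $Y$-orbit that is $\gal$-stable; since complex conjugation has order $2$ and $p$ is odd, every constituent is \emph{real}---but nothing in your hypothesis forces any of them to be \emph{rational}, so you cannot contradict the uniqueness of $\psi$, and a second rational character of $Y$ above $\theta$ is not excluded by the argument as written.

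The fix is to prove the \emph{real} statement first (for real $\theta$ and odd $|Y/X|$ there is a unique real member of $\irr(Y|\theta)$): your scheme goes through verbatim there, because in the reducible case all $p$ constituents being real already contradicts the real inductive hypothesis for $X\lhd N$. The rational version then follows in one line: the unique real $\hat\theta$ has $\hat\theta^\sigma$ again real and above $\theta^\sigma=\theta$ for every $\sigma\in\gal$, hence $\hat\theta^\sigma=\hat\theta$. Your ``doubling is invertible'' observation is exactly the mechanism behind the prime step for reality; it just needs to be deployed for reality first, with rationality deduced afterwards.
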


\begin{proposition}\label{prop:extfield}
Let $G$ be as above with $G/\zent(G)$ as in Theorem \ref{thm:main}(\ref{mainlie}), let $\chi_0\in\irr_{2'}(G)$ be chosen as above and write $\psi_0:=\Omega(\chi_0)$. Then there exist $(D\galh)_{\chi_0}$-invariant extensions $\chi_1$ and $\psi_1$ of $\chi_0$ and $\psi_0$ to $GD_1$ and $MD_1$, respectively.
\end{proposition}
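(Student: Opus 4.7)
The plan is to split into two cases based on rationality of $\chi_0$, using \prettyref{cor:Doddcharsrational}: a rational case handled by \prettyref{lem:rationalabove}, and the remaining non-rational case handled by the Harish--Chandra extension theory of \prettyref{prop:2.62.7RSF}.

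\emph{Case 1: $\chi_0$ is rational.} By \prettyref{cor:Doddcharsrational}, this covers all odd-degree characters for $\bG$ of type $\type{D}_n$ with $n\geq 4$ (including $\type{D}_4$), as well as those of type $\type{A}_{n-1}$ that do not extend to $\wt{G}$. I would first verify that $\psi_0 = \Omega(\chi_0)$ is also rational: it has the form $\Ind_{N_\la}^{M}(\Lambda(\la)\eta)$ with $\la$ corresponding to $\chi_0$ and $\eta \in \irr(W(\la))$, and in type $\type{D}_n$ this is done in the proof of \prettyref{thm:equivbij}. In the type $\type{A}_{n-1}$ subcase $\la^2=1$ by \prettyref{lem:typeArest}, $\Lambda(\la)$ is rational by \prettyref{lem:firstextmapA}, and $\eta$ is rational as a character of $W(\la) \cong R(\la) \rtimes \mathrm{C}_2$ with $R(\la)$ a Weyl group and the involution $c$ of order two (so extensions and induced characters have rational values). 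Since $|D_1|$ is odd, \prettyref{lem:rationalabove} then furnishes unique rational extensions $\chi_1$ of $\chi_0$ to $GD_1$ and $\psi_1$ of $\psi_0$ to $MD_1$. For any $(d,\sigma) \in (D\galh)_{\chi_0}$, rationality forces $\chi_0^d = \chi_0^{\sigma^{-1}} = \chi_0$, so $d \in D_{\chi_0}$ and $d$ normalizes the characteristic subgroup $D_1 = O_{2'}(D_{\chi_0})$; then $\chi_1^{(d,\sigma)}$ is again a rational extension of $\chi_0$ to $GD_1$, so equals $\chi_1$ by uniqueness. The same argument handles $\psi_1$.

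\emph{Case 2: $\chi_0$ is not rational.} In all remaining cases ($\bG$ of type $\type{A}_{n-1}$ with $\chi_0$ extending to $\wt{G}$, or of type $\type{E}_6$ or $\tw{2}\type{E}_6$), the centralizer $\cent_{\bG^\ast}(s)$ is connected and $D$ is cyclic, so $D_1 = \langle F_0 \rangle$ is cyclic of odd order for a Frobenius-type endomorphism $F_0 \in D_{\chi_0}$. I would apply \prettyref{prop:2.62.7RSF} with $\bL$ the Harish--Chandra Levi in whose series $\chi_0$ lies ($\bL = \bT$ when $q \equiv 1 \pmod{4}$, and $\bL = \cent_{\bG}(\bS)$ for a Sylow $2$-torus $\bS$ when $q \equiv 3 \pmod{4}$), and $\la$ the associated cuspidal character. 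Since $F_0 \in D_{\chi_0}$ fixes $\la$ and $\Lambda$ is $ND$-equivariant, $\Lambda(\la)$ is $F_0$-invariant and extends to $\hat\la \in \irr(N_\la \langle F_0 \rangle)$; I would choose $\hat\la$ canonically (via the odd-cyclic analogue of \prettyref{lem:rationalabove}, selecting a character of minimal determinantal order), making $\hat\la$ invariant under $(D\galh)_{\chi_0,\la}$ up to $\norm_G(\bL)$-conjugation. Then \prettyref{prop:2.62.7RSF}(2) determines $\chi_1 \in \irr(GD_1 \mid \chi_0)$ from $\hat\la$, and part (3) gives $\chi_1^{\alpha\sigma} = \chi_1 \beta$ for $\alpha\sigma \in (D\galh)_{\chi_0}$, with $\beta$ controlled by the action of $\alpha\sigma x$ on $\hat\la$ for suitable $x \in \norm_G(\bL)$; the canonical choice of $\hat\la$ forces $\beta = 1$. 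For $\psi_1$, I would extend the rational $\eta$ (by \prettyref{cor:etachar}) to $\hat\eta \in \irr(W(\la)\langle F_0\rangle)$ via \prettyref{lem:rationalabove}, and set $\psi_1 := \Ind_{(N_\la)\langle F_0\rangle}^{MD_1}(\hat\la\hat\eta)$. The invariance of $\psi_1$ then follows from the equivariance of $\hat\la$ and $\hat\eta$.

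The main obstacle is verifying that the canonical extension $\hat\la$ propagates correctly through the Harish--Chandra correspondence of \prettyref{prop:2.62.7RSF} to yield genuine $(D\galh)_{\chi_0}$-invariance of $\chi_1$, together with the parallel compatibility on the local side via $\hat\eta$. The case $q \equiv 3 \pmod{4}$ additionally requires reinterpreting the Harish--Chandra picture inside the $vF$-twisted group $\bG^{vF}$ (with $T_1$, $N_1$, $W_1(\la_1)$, and $R_1(\la_1)$ in place of their untwisted analogues), as in the second half of the proof of \prettyref{thm:equivbij}.
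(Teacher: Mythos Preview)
Your proposal is essentially the paper's proof, with only a minor reorganization of the case split. The paper divides by root system (type $\type{D}_n$ via rationality and \prettyref{lem:rationalabove}; all other types via \prettyref{prop:2.62.7RSF}), whereas you divide by whether $\chi_0$ is rational, which moves the non-extending type $\type{A}$ characters into the rationality argument. Both routes are valid, and your extra verification that $\psi_0$ is rational in that subcase is correct for the reasons you give.

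Two small points. First, your assertion in Case~2 that ``$D$ is cyclic'' is false for untwisted $\type{A}_{n-1}(q)$, where $D\cong C_f\times C_2$; what you actually need (and what holds) is that $D_1=O_{2'}(D_{\chi_0})$ is cyclic, being an odd-order subgroup of the abelian group $D$, and is generated by the restriction of a genuine Frobenius $F_0$ with $F_0^r=F$ (with $\rho=1$ untwisted, $\rho=\gamma$ twisted) and $\bg{W}^{F_0}=\bg{W}^F$. Second, the obstacle you flag---propagating a canonical choice of $\hat\la$ through \prettyref{prop:2.62.7RSF} to force $\beta=1$, and the parallel local statement for $\psi_1$---is exactly what the paper does not redo: it invokes the proof of \prettyref{lem:firstextmapA} together with Lemma~\ref{stab} to see that $\la$ (and $\la_1$ when $d=2$) is $F_0$- and $a$-stable, and then cites \cite[Lemmas~3.1(c), 3.3]{RSF22} (with $\tau\in\gal$ replaced by $a\in (D\galh)_{\chi_0}$) for the remaining verification. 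Your sketch of this step is in the right direction, but the cleanest route is precisely that citation.
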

\begin{proof}
First, suppose that $\bG$ is of type $\type{D}_n$ with $n\geq 4$. Then by  \prettyref{cor:Doddcharsrational} and the proof of Theorem \ref{thm:equivbij}, 
$\chi_0$ and $\psi_0$ are rational-valued. Then by \prettyref{lem:rationalabove}, there exists a unique rational character ${\chi}_1$, resp. ${\psi}_1$, in $\irr(GD_1|\chi_0)$, resp. $\irr(MD_1|\psi_0)$.  Since $D_1$ is abelian and the characters $\chi_0$ and $\psi_0$ extend to the respective groups, it follows that $\chi_1$ and $\psi_1$ must be extensions. Then it now suffices to note that for $a\in (D\galh)_{\chi_0}$, the character $\chi_1^a$ is another rational extension of $\chi_0=\chi_0^a$, and hence $\chi_1^a=\chi_1$, and similarly $\psi_1^a=\psi_1$.

Now suppose that $\bG$ is not of type $\type{D}_n$ (in fact, the following argument also works for $\type{D}_n$ with $n\neq 4$) and let $\chi_0$ be an irreducible constituent of $\HC_T^G(\la)$. Then there exists a Frobenius map $F_0$ such that $GD_1=G\langle F_0\rangle$, $F_0^r=F$ for some positive integer $r$, and $\bg{W}^{F_0}=\bg{W}^{F}$, where $\bg{W}$ is the Weyl group of $\bG$. Further, recall that $\la\in\odd(T)$ (and, in the case $d=2$, $\la_1\in\odd(T_1)$) and can be chosen to be invariant under $F_0$ and $a\in (D\galh)_{\chi_0}$ by Lemma \ref{stab} and its proof. In particular, the hypotheses of Proposition \ref{prop:2.62.7RSF} hold in this case, as do the hypotheses in \cite[Lemmas 3.1(c), 3.3]{RSF22}  with $\tau\in\gal$ replaced with $a$. Then the exact same arguments as in loc. cit. apply here and give the desired extensions.   
\end{proof}

\begin{corollary}\label{cor:extGD}
Let $G$ be as above with $G/\zent(G)$ as in Theorem \ref{thm:main}(\ref{mainlie}), let $\chi_0\in\irr_{2'}(G)$ be chosen as above and write $\psi_0:=\Omega(\chi_0)$. Then there are $(D\galh)_{\chi_0}$-invariant extensions $\hat\chi, \hat\psi$ of $\chi_0$ and $\psi_0$ to  $GD_{\chi_0}$ and $MD_{\psi_0}$, respectively.
\end{corollary}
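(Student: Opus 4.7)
The plan is to glue the extensions $\chi_1$ and $\chi_2$ of $\chi_0$ constructed in Propositions \ref{prop:extfield} and \ref{prop:ext2pt} into a single extension $\hat\chi$ of $\chi_0$ to $GD_{\chi_0}$, and then apply the same argument on the local side.

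First, suppose $G\neq \type{D}_4(q)$, so that $D_{\chi_0}=D_1\times D_2$ is abelian. By Gallagher's theorem, extensions of $\chi_0$ to $GD_{\chi_0}$ are parametrized by $\irr(D_{\chi_0})\cong \irr(D_1)\times \irr(D_2)$. Writing $\chi_1=\chi_0\cdot \lambda^{(1)}$ and $\chi_2=\chi_0\cdot \lambda^{(2)}$ with $\lambda^{(i)}\in\irr(D_i)$ inflated from $GD_i/G$, I define
\[\hat\chi:=\chi_0\cdot(\lambda^{(1)}\otimes \lambda^{(2)}),\]
which is the unique extension of $\chi_0$ to $GD_{\chi_0}$ whose restrictions to $GD_1$ and $GD_2$ recover $\chi_1$ and $\chi_2$, respectively. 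For $a\in (D\galh)_{\chi_0}$, the subgroups $D_1$ and $D_2$ are both characteristic in the abelian group $D_{\chi_0}$, so $a$ normalizes $GD_1$ and $GD_2$; in particular $a\in(\mathrm{N}_D(D_2)\times \galh)_{\chi_0}$. Propositions \ref{prop:extfield} and \ref{prop:ext2pt} then give $\chi_1^a=\chi_1$ and $\chi_2^a=\chi_2$, so $\hat\chi^a$ and $\hat\chi$ are two extensions of $\chi_0$ with matching restrictions to $GD_1$ and $GD_2$; the uniqueness in the Gallagher correspondence forces $\hat\chi^a=\hat\chi$.

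For $G=\type{D}_4(q)$ the group $D_{\chi_0}$ may be non-abelian (due to triality) and $D_2$ need not be characteristic, so the tensor-product construction above requires modification. However $D_1=O_{2'}(D_{\chi_0})$ is still characteristic, $\chi_1$ is still $D_{\chi_0}$-invariant, and by Schur--Zassenhaus one has $D_{\chi_0}=D_1\rtimes D_2$ with $D_2\hookrightarrow D_{\chi_0}\twoheadrightarrow D_{\chi_0}/D_1$ an isomorphism. Hence extensions of $\chi_1$ to $GD_{\chi_0}$ form a torsor under $\irr(D_{\chi_0}/D_1)\cong \irr(D_2)$, and I choose $\hat\chi$ to be the unique extension of $\chi_1$ satisfying $\hat\chi|_{GD_2}=\chi_2$. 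Invariance then follows by combining the $(D\galh)_{\chi_0}$-invariance of $\chi_1$ with the $(\mathrm{N}_D(D_2)\times \galh)_{\chi_0}$-invariance of $\chi_2$ and the transitive action of inner automorphisms from $D_1$ on the Sylow $2$-subgroups of $D_{\chi_0}$.

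Finally, the construction of $\hat\psi$ proceeds verbatim after replacing $(G,\chi_0,\chi_1,\chi_2)$ with $(M,\psi_0,\psi_1,\psi_2)$, using that the bijection $\Omega$ is $(D\galh)$-equivariant (Theorem \ref{thm:equivbij} and \cite[Theorem A]{SF22}), so that $D_{\psi_0}=D_{\chi_0}$ and the analogous Propositions \ref{prop:extfield} and \ref{prop:ext2pt} (which are stated uniformly for $\chi_0$ and $\psi_0$) supply the required $\psi_1,\psi_2$. The main obstacle is the $\type{D}_4(q)$ case: the non-abelianness of $D_{\chi_0}$ breaks the clean direct-product argument and requires one to carefully track how elements of $(D\galh)_{\chi_0}$ permute the Sylow $2$-subgroups of $D_{\chi_0}$, using that the extensions $\chi_1,\chi_2$ are already sufficiently equivariant to force the desired compatibility.
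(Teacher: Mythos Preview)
Your proof is correct and follows essentially the same strategy as the paper: in both cases $\hat\chi$ is taken to be the unique character of $GD_{\chi_0}$ that simultaneously extends $\chi_1$ and $\chi_2$, and invariance is deduced from this characterizing property. The paper is terser for $G=\type{D}_4(q)$, outsourcing both the existence of the common extension and the vanishing of the Gallagher twist $\beta$ to \cite[Corollary~6.10]{ruhstorfer}, whereas you spell out a Schur--Zassenhaus/torsor argument directly; this is a presentational rather than a mathematical difference.

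Two minor remarks. First, your notation ``$\chi_1=\chi_0\cdot\lambda^{(1)}$'' is abusive (you have not fixed a base extension of $\chi_0$ to $GD_{\chi_0}$ against which to measure $\lambda^{(1)}$), though the intended meaning is clear. Second, your closing manoeuvre for $\type{D}_4(q)$ --- using that $D_1$ acts transitively on Sylow $2$-subgroups of $D_{\chi_0}$ to reduce to elements normalizing $D_2$ --- is valid but unnecessary: by Corollary~\ref{cor:Doddcharsrational} and Remark~\ref{rem:stabs} one has $(D\galh)_{\chi_0}=D_{\chi_0}\times\galh$, and elements of $D_{\chi_0}$ act on $GD_{\chi_0}$ by inner automorphisms, hence fix $\hat\chi$ automatically. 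It therefore suffices to check $\galh$-invariance, which follows immediately from the uniqueness of $\hat\chi$ together with $\chi_1^\sigma=\chi_1$ and $\chi_2^\sigma=\chi_2$.
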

\begin{proof}
If $G/\zent(G)$ is not $\type{D}_{4}(q)$, this follows from Propositions \ref{prop:ext2pt} and \ref{prop:extfield} by considering the unique common extensions of ${\chi}_1$ and $\chi_2$, respectively ${\psi}_1$ and ${\psi}_2$,  to $GD_{\chi_0}$, respectively $MD_{\chi_0}$, which exist since $D$ is abelian.

So now assume that $G/\zent(G)=\type{D}_4(q)$. As before, {by Lemma \ref{lem:rationalabove} we can extend $\chi_0$ to a unique rational valued character $\chi_1 \in \Irr(G D_1)$. Moreover, we let $\chi_2 \in \Irr(GD_2)$ be the $\mathcal{H}$-stable extension as in Proposition \ref{prop:ext2pt}. The properties of $\chi_1$ imply that $\chi_1$ is $D_2$-stable. As in the proof of \cite[Corollary 6.10]{ruhstorfer}, we conclude that there exists an extension $\hat{\chi} \in \Irr(G D_\chi)$ which extends both $\chi_1$ and $\chi_2$. Now by Remark \ref{rem:stabs}, we have $(\mathcal{H} D)_\chi=\mathcal{H} D_{\chi_0}$. For $a \in \mathcal{H}$ there exists a linear character $\beta \in \Irr(D_{\chi_0})$ such that $\hat{\chi}^a=\hat{\chi} \beta$. Arguing as in \cite[Corollary 6.10]{ruhstorfer} we observe that $\beta_{D_{\chi_0}}=1$ and so $\hat{\chi}$ is $\mathcal{H}$-stable.} Replacing $G$ by $M$, we may argue the same to obtain the extension $\hat\psi$.
\end{proof}

\begin{corollary}\label{cor:extwtG}
Let $G$ be as above with $G/\zent(G)$ as in Theorem \ref{thm:main}(\ref{mainlie}), let $\chi_0\in\irr_{2'}(G)$ be chosen as above and write $\psi_0:=\Omega(\chi_0)$. Let $a\in (D\galh)_{\chi_0}$.   Then there are extensions $\wt\chi, \wt\psi$ of $\chi_0$ and $\psi_0$ to  $\wt{G}_{\chi_0}$ and $\wt{M}_{\psi_0}$, respectively, satisfying that for $a\in (D\galh)_{\chi_0}$, if $\wt\chi^a=\wt\chi\mu_a$ and $\wt\psi^a=\wt\psi\mu_a'$ for $\mu_a, \mu_a'\in \irr(\wt{G}_{\chi_0}/G)\cong\irr(\wt{M}_{\chi_0}/M)$, then $\mu_a=\mu_a'$.

Hence, 
Definition \ref{def:triples}(iv) 
holds for $\chi_0$ and for any $a \in(D\galh)_{\chi_0}$. In particular, $G$ satisfies Condition \eqref{eq:extpart} from Section \ref{sec:iMN}.
\end{corollary}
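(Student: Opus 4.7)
The plan is to construct $\wt\chi$ and $\wt\psi$ via the bijection $\wt\Omega$ from \eqref{eqn:wtOmega} and transfer the $\galh$-equivariance of Proposition \ref{prop:wtOmegaequiv} through Clifford theory. Since $\wt G/G$ is abelian and $\chi_0$ extends to $\wt G_{\chi_0}$, induction provides a bijection between extensions of $\chi_0$ to $\wt G_{\chi_0}$ and characters in $\irr(\wt G|\chi_0)$. First choose any extension $\wt\chi$ of $\chi_0$ to $\wt G_{\chi_0}$, set $\wt\chi_\ast:=\Ind_{\wt G_{\chi_0}}^{\wt G}(\wt\chi)$, define $\wt\psi_\ast:=\wt\Omega(\wt\chi_\ast)\in\irr(\wt M|\psi_0)$, and let $\wt\psi$ be the unique extension of $\psi_0$ to $\wt M_{\psi_0}$ satisfying $\Ind_{\wt M_{\psi_0}}^{\wt M}(\wt\psi)=\wt\psi_\ast$.

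For $a\in(D\galh)_{\chi_0}$, note that $a$ normalizes $\wt G_{\chi_0}$ and that $\wt\Omega$ is $a$-equivariant (the $D$-part of $a$ stabilizes $\bS$ by the choice of $M$, while the $\galh$-part is handled by Proposition \ref{prop:wtOmegaequiv}). Hence $\wt\chi^a$ is another extension of $\chi_0$ to $\wt G_{\chi_0}$, so Gallagher yields $\wt\chi^a=\wt\chi\mu_a$ for a unique $\mu_a\in\irr(\wt G_{\chi_0}/G)$. Any $\tilde\mu_a\in\irr(\wt G/G)$ extending $\mu_a$ satisfies $\wt\chi_\ast^a=\wt\chi_\ast\tilde\mu_a$ by the projection formula. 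Applying $\wt\Omega$ together with the compatibility $\wt\Omega(\wt\chi_\ast\beta)=\wt\Omega(\wt\chi_\ast)\beta$ for $\beta\in\irr(\wt G/G)\cong\irr(\wt M/M)$ gives $\wt\psi_\ast^a=\wt\psi_\ast\tilde\mu_a$, and uniqueness of the Clifford correspondent then yields $\wt\psi^a=\wt\psi\mu_a'$ with $\mu_a'=\tilde\mu_a|_{\wt M_{\psi_0}}$. Since the identification $\irr(\wt G/G)\cong\irr(\wt M/M)$ matches the stabilizer subgroups $\irr(\wt G/\wt G_{\chi_0})$ and $\irr(\wt M/\wt M_{\psi_0})$ (these being, by Gallagher, the respective stabilizers of $\wt\chi_\ast$ and $\wt\psi_\ast$ under multiplication), it descends to a well-defined identification $\irr(\wt G_{\chi_0}/G)\cong\irr(\wt M_{\psi_0}/M)$ sending $\mu_a$ to $\mu_a'$, as required.

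For the ``in particular'' assertion, I would glue $\wt\chi$ with the $(D\galh)_{\chi_0}$-invariant extension $\hat\chi$ of $\chi_0$ to $GD_{\chi_0}$ from Corollary \ref{cor:extGD}, using $(\wt GD)_{\chi_0}=\wt G_{\chi_0}D_{\chi_0}$, to produce a common extension of $\chi_0$ to $(\wt GD)_{\chi_0}$. Via \cite[Theorem 2.9]{NavarroSpathVallejo}, this translates to a projective representation $\mathcal{P}$ of $G\rtimes\aut(G)_{Q,\chi_0}$ associated to $\chi_0$ with root-of-unity factor set; construct $\mathcal{P}'$ analogously from $\wt\psi$ and $\hat\psi$. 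The factor set and scalar-matrix parts of Condition \ref{cond:ext} follow from the standard setup in \cite[Lemmas 2.11, 2.13]{spath12} together with \cite[Lemma 4.2]{RSF22}. For the equality of $\mu_a$ and $\mu_a'$ on $M\rtimes\aut(G)_{Q,\chi_0}$, decompose $a$ into its $D_{\chi_0}$-part, which contributes trivially by the $(D\galh)_{\chi_0}$-invariance of $\hat\chi$ and $\hat\psi$, and its $\wt G/G$-part, handled by the preceding paragraph.

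The main obstacle lies in carefully tracking the three compatibilities --- Clifford induction, the $\irr(\wt G/G)$-action on characters above $\chi_0$, and the bijection $\wt\Omega$ --- under the action of $a$. These compatibilities are already built into $\wt\Omega$ by the constructions of \cite[Theorem 6.3]{MS16} and \cite[Theorem 6.1]{CS16} and refined in Proposition \ref{prop:wtOmegaequiv}, so the argument reduces to a clean separation of the $\wt G/G$-contribution (controlled by $\wt\Omega$-equivariance) from the $D$-contribution (controlled by Corollary \ref{cor:extGD}).
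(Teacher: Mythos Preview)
Your argument is correct and uses the same ingredients as the paper --- Clifford correspondence, the $(\irr(\wt G/G)\times D\times\galh)$-equivariance of $\wt\Omega$, Corollary \ref{cor:extGD}, and \cite[Lemma 4.2]{RSF22}. The paper, however, splits into two cases according to whether $\chi_0$ extends to all of $\wt G$: when it does, the paper proceeds exactly as in your second paragraph; when it does not (so $|\wt G/G\zen(\wt G)|$ is a $2$-power and $\chi_0$ is rational by Corollary \ref{cor:Doddcharsrational}), the paper instead picks the unique extension $\wt\chi$ to $\wt G_{\chi_0}$ with determinantal order $1$, shows directly that $\wt\chi^a=\wt\chi$, and then transports this invariance through $\wt\Omega$ on the induced character to deduce $\wt\psi^a=\wt\psi$. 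Your uniform treatment via induction and Clifford correspondence avoids this case split and is a little more economical; the price is that you must observe that the $D$-part of $a$ normalises $\wt G_{\chi_0}$ (which follows since $\chi_0^d=\chi_0^{\sigma^{-1}}$ and Galois automorphisms commute with the $\wt G$-action), whereas in the paper's non-extending case this is immediate because there $a\in D_{\chi_0}\times\galh$.

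One expository slip: in your last paragraph you write ``decompose $a$ into its $D_{\chi_0}$-part\ldots and its $\wt G/G$-part'', but $a\in(D\galh)_{\chi_0}$ has no $\wt G/G$-component. What you evidently intend is to decompose the \emph{domain} $(\wt M D)_{\chi_0}=\wt M_{\chi_0}D_{\chi_0}$ on which the functions $\mu_a$, $\mu_a'$ of Condition \ref{cond:ext} live: the $\wt M_{\chi_0}$-piece is controlled by your first two paragraphs, the $D_{\chi_0}$-piece by the $(D\galh)_{\chi_0}$-invariance of $\hat\chi$ and $\hat\psi$ from Corollary \ref{cor:extGD}. This is precisely how \cite[Lemma 4.2]{RSF22} is invoked in the paper, so once rephrased your ``in particular'' argument matches the paper's.
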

\begin{proof}
First, note that the second statement follows from the first and from Corollary \ref{cor:extGD}, by using \cite[Lemma 4.2]{RSF22} to obtain the desired extensions in Definition \ref{def:triples}(iv).

Recall that $\chi_0$ either extends to $\wt{G}$ or we have $|\zen(G)|$ and $|\wt{G}/G\zen(\wt{G})|$ are each a power of $2$ and $\chi_0$ is stabilized by $\galh$.  
In the latter case, note that $\chi_0$ and $\psi_0$ may be extended trivially to $G\zen(\wt{G})$ and $M\zen(\wt{G})$, respectively, since they are trivial on $\zen(G)$ (as they have odd degree).  So, we may identify $\chi_0$ and $\psi_0$ with these extensions.  
Then since $\chi_0$ has odd degree, the determinantal order $o(\chi_0)$ satisfies $o(\chi_0)=1$ (since $G$ is perfect), and $|\wt{G}/G\zen(\wt{G})|$ is a power of $2$, it follows that there is a unique extension $\wt\chi$ of $\chi_0$ to $\wt{G}_{\chi_0}$ with $o(\wt\chi)=1$, by \cite[Corollary 8.16]{isaacs}.  Then since $\wt\chi^a$ is another extension with $o(\wt\chi^a)=1$, we have $\wt\chi^a=\wt\chi$. 
Further, in these cases, recall that $\chi_0$ is rational-valued, so that $a\in D_{\chi_0}\galh$.  Then $\wt{\chi}_0:=\Ind_{\wt{G}_{\chi_0}}^{\wt{G}}(\wt\chi)$ is also $a$-invariant.  Then $\wt{\psi}_0:=\wt{\Omega}(\wt{\chi}_0)$ is $a$-invariant since $\wt{\Omega}$ is $(\wt{G}D\times\galh)$-equivariant by \prettyref{prop:wtOmegaequiv} and the discussion before it. 
 Then since $\wt{\psi}_0$ must lie above $\psi_0$ (by the properties of $\wt\Omega$ discussed above) and $\wt{M}/M$ is abelian, we have $\wt{\psi}_0=\Ind_{\wt{M}_{\psi_0}}^{\wt{M}}(\wt\psi)$ for some extension $\wt\psi$ of $\psi_0$ to $\wt{M}_{\psi_0}$.  
 Now, since $\psi_0^a=\psi_0$, we have $\wt\psi^a=\wt\psi\beta$ for some linear $\beta\in\irr(\wt{M}_{\psi_0}/M)$.  
 But then  $\wt{\psi}_0=\wt\psi_0^a=\Ind_{\wt{M}_{\psi_0}}^{\wt{M}}(\wt\psi^a)=\Ind_{\wt{M}_{\psi_0}}^{\wt{M}}(\wt\psi\beta)=\Ind_{\wt{M}_{\psi_0}}^{\wt{M}}(\wt\psi)\beta=\wt\psi_0\beta$, where the second-to-last equality follows from \cite[Problem (5.3)]{isaacs}.  Then $\beta=1$ and $\wt\psi^a=\wt\psi$.

So, we are left with the case that $\chi_0$ and $\psi_0$ extend to $\wt{G}$ and $\wt{M}$, respectively. In this case, let $\wt{\chi}_0$ be an extension of $\chi_0$, so that $\wt{\chi}_0^a=\wt{\chi}_0\beta$ for some $\beta\in\irr(\wt{G}/G)$ and let $\wt\psi_0:=\wt\Omega(\wt\chi_0)$. Then $\wt\psi_0$ is an extension of $\psi_0$ as well, since it lies above $\psi_0$ and $\wt{M}/M$ is abelian. Then, recalling that $\wt\Omega$ is $(\irr(\wt{G}/G)\times D\galh)$-equivariant by \prettyref{prop:wtOmegaequiv} and the properties discussed after \eqref{eqn:wtOmega}, we have $\wt\psi_0^a=\wt\Omega(\wt\chi_0)^a=\wt\Omega(\wt{\chi}_0^a)=\wt\Omega(\wt{\chi}_0\beta)=\wt\Omega(\wt\chi_0)\beta=\wt\psi_0\beta$. The result now follows, taking $\wt\chi:=\wt\chi_0, \wt\psi:=\wt\psi_0,$ and $\mu_a=\beta=\mu_a'$. The final sentence follows from the considerations made at the beginning of this section.
\end{proof}

From here, we see that we have proved the inductive McKay--Navarro conditions for the groups in Theorem \ref{thm:main}(\ref{mainlie}), which combined with \prettyref{prop:sporadicalt} completes the proof of Theorem \ref{thm:main}.  In conjunction with those results discussed in the introduction, this completes the proof of the McKay--Navarro conjecture for $\ell=2$.


\def\cprime{$'$} \def\cprime{$'$}

\end{document}